\documentclass[11pt]{amsart}
\usepackage{mathtools}
\usepackage{amsfonts}
\usepackage{mathrsfs}
\usepackage{bigints}
\usepackage{enumitem}
\usepackage[latin1]{inputenc}
\usepackage{amsmath,amssymb}
\usepackage{latexsym}
\usepackage{epstopdf}
\usepackage{geometry}   
\usepackage{xcolor}
\usepackage[active]{srcltx}
\usepackage{graphicx}
\usepackage{epstopdf}
\usepackage[
bookmarks=true,         
bookmarksnumbered=true, 
colorlinks=true, pdfstartview=FitV, linkcolor=blue, citecolor=blue,
urlcolor=blue]{hyperref}

 \topmargin -2cm
 \oddsidemargin -0.06cm
 \evensidemargin -0.06cm
 \textwidth 16.42cm
 \textheight 23.96cm
 \parskip 1.2pt

\newtheorem {theorem}{Theorem}[section]
\newtheorem {proposition}{Proposition}[section]
\newtheorem {corollary}{Corollary}[section]

\newtheorem{lemma}{Lemma}[section]

\numberwithin{equation}{section}

\def\R{{\mathbb R}}

\def\wC{\widetilde{C}}
\def\wA{\widetilde{A}}

\def\wY{\widetilde{Y}}

\begin{document}

\title[Euler approximation for stochastic Volterra equations]{Error distribution of  the Euler approximation scheme for stochastic Volterra equations}

\author{David Nualart, Bhargobjyoti Saikia}
 \thanks{%
	The work by D. Nualart has been supported   by the  NSF grant DMS-2054735}

\address{David Nualart: Department of Mathematics, University of Kansas, 1450 Jayhawk Blvd., Lawrence, KS 66045. USA.}
\email{nualart@ku.edu}   

\address{Bhargobjyoti Saikia: Department of Mathematics, University of Kansas, 1450 Jayhawk Blvd., Lawrence, KS 66045. USA.}
\email{bhargob@ku.edu}

\keywords{Stochastic Volterra equations, Euler approximation}
\date{\today}
\subjclass[2020]{60H20,60F05}

\maketitle
 \begin{abstract} 
 The purpose of this paper is to establish the convergence in distribution of the normalized error in the Euler approximation scheme for stochastic Volterra equations driven by a standard Brownian motion,  with a kernel of the form $(t-s)^\alpha$, where $\alpha \in (-\frac 12, \frac 12)$.
 \end{abstract}
 
  \section{Introduction}\label{section settings}
  There have been several studies on stochastic Volterra equations starting from the pioneering work by Berger and Mizel \cite{BM1,BM2}. We could mention, for instance,  the papers by Protter \cite{Pro}, Pardoux and Protter \cite{PP}, Cochran, Lee and Potthoff \cite{CLP}, Coutin and Decreusefond \cite{CD}, Decreusefond \cite{D}, Wang \cite{Wa}, Zhang \cite{Zh}, and the references therein.

In this paper we are interested in the  following stochastic Volterra  equation
\begin{align} \label{volterra}
X_t=X_0+\int^t_0 (t-s)^{\alpha} \sigma(X_s)\, dW_s,  \qquad t\in[0,T],
\end{align}
where $X_0 \in \mathbb{R}$,  $\alpha\in(-\frac{1}{2}, \frac{1}{2})$,    $\sigma$   is a Lipschitz function and  
$W=\{W_t, t\ge [0,T]\}$ is a standard Brownian motion defined in a probability space $(\Omega, \mathcal{F}, P)$.
Our aim is to study the fluctuations of the  error in the Euler approximation scheme for this equation. It is known that the rate of convergence of the Euler approximation scheme is of the order $n^{-\alpha-\frac 12}$. We refer for this to the work by  Li, Huang and Hu \cite{LHH}, where they consider the more general  $\theta$-Euler  and   Milstein schemes    in the singular case $-\frac 12<\alpha <0$, and the papers by
Zhang \cite{Zh1} and Richard, Tan and Yang \cite{RTY}, where  the Euler approximation scheme for more general stochastic Volterra equations is studied. 
However, to the best of our knowledge, there have been no  results on the convergence in distribution of the normalized error, analogous to the case of ordinary stochastic differential equations (see, for instance, the work by Jacod and Protter \cite{JP}).

The Euler scheme approximation for the solution to equation (\ref{volterra}) is defined as
\begin{align}\label{euler}
X^n_t=X_0+\int^t_0(t-\eta_n(s))^{\alpha} \sigma(X^n_{\eta_{n}(s)})\, dW_s,
\end{align}
where $\eta_n(s)=\frac{\lfloor ns\rfloor}{n}$.  That is, $\eta_n(s) = \frac jn$ if $\frac jn \le s< \frac {j+1}n$ for some integer $j\ge 0$. 
Consider the   normalized error defined by
 \begin{equation} \label{merror}
 Y^n_t= n^{\alpha+\frac 12} ( X^n_t-X_t).
 \end{equation}
It is known that    $Y^n_t$ is bounded in $L^p$, for any $p\ge 2$, uniformly in $t\in [0,T]$ and $n \ge1$ (see  Lemma \ref{bound}) for a short proof. 
This means that the rate of convergence to zero  as $n \to \infty$ in $L^p$  of $X^n_t-X_t$ is of the order $n^{-\alpha-\frac 12}$.
 The goal of this paper is to study the asymptotic behavior of the normalized error \eqref{merror}.
 
We will describe here  the approach and the challenging difficulties encountered in handling the limit distribution of the normalized error
$Y^n_t$.  This error can be decomposed as follows:
\begin{align} \nonumber
Y^n_t &= n^{\alpha+\frac 12}\int^t_0   [(t-\eta_n(s))^{\alpha} - (t-s)^{\alpha}]    \sigma(X^n_{\eta_{n}(s)})\, dW_s\\ \nonumber
& \qquad +n^{\alpha+\frac 12}\int^t_0    (t-s)^{\alpha}   [   \sigma(X^n_{\eta_n(s)}) -\sigma(X_s)]\, dW_s\\ \label{decom}
&:= A^n_t +Y^{n,1}_t.
\end{align}
The term $A^n_t$ has an unexpected asymptotic behavior. Indeed, it only converges at points of the form $t=\eta_n(t)$
(see Remark 1 below) and its limit in  law is
$\kappa_1 \sigma(X_t) Z_t$, where $\kappa_1$  is a constant defined in \eqref{kappa1} and $Z_t$ is a $N(0,1)$ random variable independent of the Brownian motion $W$. Moreover,  (see Proposition \ref{first_term}) the finite-dimensional distributions of the process $\{A^n_{\eta_n(t)}, t\in [0,T]\}$ converge to those of
$\{ \kappa_1 \sigma(X_t) Z_t, t\in [0,T]\}$, where $\{ Z_t, t\in [0,T]\}$ is a centered Gaussian process independent of $W$ with covariance
\begin{equation} \label{covZ}
E[ Z_{t_1} Z_{t_2}]=\mathbf{1}_{ \{t_1  =t_2\}}.
\end{equation}
Notice that the limit in distribution of  $A^n_{\eta_n(t)}$ does  not have any H\"older continuity properties, which creates problems in the  global analysis of the error. For this reason, we have split the study of the error in two parts, corresponding to both terms in the above decomposition.
For the term  $Y^{n,1}_t$, the limit in distribution can be expressed as the solution to a stochastic Volterra equation, with an input given by a stochastic integral term with respect to an independent Brownian motion $B$. A precise statement is given in Theorem  \ref{mainthm}.
The proof of the convergence of the term $Y^{n,1}_t$ relies on the application of the  asymptotic version of Knight's theorem. Finally, in Theorem \ref{thmjoint} we establish the convergence in distribution of the couple of random variables   $(A^n_{\eta_n(t)}, Y^{n,1}_t)$, which implies, as a consequence, the limit in law of the normalized error  $Y^n_{\eta_n(t)}$  given in Corollary \ref{cor}.

The paper is organized as follows. Section 2 contains the main result. In Section 3 we introduce some preliminaries and in Section 4 we present the proof of the main result. Some technical auxiliary results are included in the Apppendix.

\section{Main result}
  Let $X=\{ X_t , t\in [0,T]\}$ be the solution to the stochastic Volterra equation \eqref{volterra}.
 Consider the Euler approximation scheme   $X^n=\{ X^n_t , t\in [0,T]\}$ defined in \eqref{euler}.
The following theorem, which deals with the convergence of the processes $A^n_t$ and  $Y^{n,1} _t$ appearing in the decomposition of the normalized error,  is the main result of this paper.

\begin{theorem}\label{mainthm}
Let $Y^n=\{ Y^n_t, t\in [0,T]\}$ be the normalized error  in the Euler approximation scheme of equation \eqref{volterra} defined in  \eqref{merror}.
Suppose that $\sigma$ is continuously differentiable and $\sigma'$ is H\"older continuous of order $\beta$ for some $\beta \in (0,1]$.
Consider the decomposition introduced in \eqref{decom}. Then  the following results hold true:
\begin{itemize}
\item[(i)]   As $n$ tends to infinity,  the finite-dimensional distributions of  the process 
$\{A^n_{\eta_n(t)}, t\in [0,T]\}$ converge in distribution  to those of  $\kappa_1 \sigma(X_t) Z_t$, where
\begin{equation} \label{kappa1}
\kappa_1=  \sqrt{\sum_{k=1}^\infty \int_0^1 [ k^\alpha - (k-x)^\alpha]^2 dx}
\end{equation}
 and $\{Z_t, t\in [0,T]\}$  is a zero mean Gaussian process independent of $W$ with covariance  \eqref{covZ}.
 
\item[(ii)]  As $n$ tends to infinity,   the finite-dimensional distributions of  the process  $\{Y^{n,1} _t, t\in [0,T]\}$  converge in distribution to those of $\{Y^{\infty,1} _t, t\in [0,T]\}$, where  $Y^{\infty,1} _t$ is the solution to the following stochastic  Volterra equation
\[
Y^{\infty,1} _t = \kappa_2 \int_0^t (t-s)^\alpha (\sigma'\sigma)(X_s) dB_s + \int_0^t (t-s)^\alpha \sigma'(X_s) Y^{\infty,1}_s dW_s,
\]
where $B=\{B_t, t\in [0,T]\}$ is a standard Brownian motion independent of $W$ and 
\begin{equation} \label{kappa2}
\kappa_2 =\sqrt{ \kappa_3 + \frac {2\alpha+1} {2(\alpha+1)^2 } +\kappa_4 +\kappa_5},
\end{equation}
 with
\begin{equation} \label{kappa3}
\kappa_3=  \sum_{k=1}^ \infty \int_{0}^1
\left[(x+k)^{\alpha}-k^{\alpha}\right]  ^2dx,
\end{equation}
\begin{equation} \label{kappa4}
\kappa_4=
\sum_{k= 0}^{\infty}\int_0^1   \int_{0} ^{1 \wedge (y+k)} [ (y+k)^\alpha - (y+k-z)^\alpha]^2 dzdy.
\end{equation}
and
\begin{equation} \label{kappa5}
\kappa_5= \sum_{k=1}^{\infty}  \int_0^1  \int_{0} ^{1} [(y+k)^\alpha- (y+k-z)^\alpha][ (y+k)^\alpha -k^\alpha] dz dy.
\end{equation}
\end{itemize}
\end{theorem}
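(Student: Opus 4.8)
The two parts are handled separately: (i) is a martingale central limit theorem, whereas (ii) is obtained by writing $Y^{n,1}$ as the solution of a linear stochastic Volterra equation whose inhomogeneous term is analysed through the asymptotic version of Knight's theorem. For (i), fix a point $t=\eta_n(t)=j/n$ and expand the stochastic integral over the mesh: on $[\tfrac{i-1}{n},\tfrac{i}{n})$ the integrand of $A^n_{j/n}$ is $n^{-\alpha}\sigma(X^n_{(i-1)/n})[k^\alpha-(k-x)^\alpha]$ with $k=j-i+1$ and $x=n(s-\tfrac{i-1}{n})$, so that
\[
A^n_{j/n}=\sum_{i=1}^{j}\sigma\big(X^n_{(i-1)/n}\big)\,\zeta^n_i,\qquad \zeta^n_i=n^{1/2}\int_{(i-1)/n}^{i/n}\big[k^\alpha-(k-n(s-\tfrac{i-1}{n}))^\alpha\big]\,dW_s,
\]
with $\E[(\zeta^n_i)^2\mid\F_{(i-1)/n}]=\int_0^1[k^\alpha-(k-x)^\alpha]^2\,dx$. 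Thus $A^n_{j/n}$ is a sum of martingale differences with predictable quadratic variation $\sum_{k=1}^{j}\sigma(X^n_{(j-k)/n})^2\int_0^1[k^\alpha-(k-x)^\alpha]^2\,dx$; the summand is $O(k^{2\alpha-2})$, so the series converges — this is exactly where $\alpha<\tfrac12$ is used — and together with Lemma \ref{bound} and the continuity of $X$ one gets, by dominated convergence, convergence of the predictable quadratic variation to $\kappa_1^2\sigma(X_t)^2$. A Lindeberg check and the vanishing of the cross brackets with $W$ then give the stable convergence of $A^n_{\eta_n(t)}$ to $\kappa_1\sigma(X_t)Z_t$ with $Z_t\sim N(0,1)$ independent of $W$; for several time points the cross covariance of the martingale differences over a common cell involves the two kernels $k_1^\alpha-(k_1-x)^\alpha$ and $k_2^\alpha-(k_2-x)^\alpha$ with $k_2-k_1=n(t_2-t_1)\to\infty$, which forces one of them to zero and yields the degenerate covariance \eqref{covZ}.

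For (ii), first Taylor expand: using $\sigma\in C^1$, $\sigma'\in C^\beta$ and the bounds $\|X^n_{\eta_n(s)}-X^n_s\|_p+\|X^n_s-X_s\|_p\lesssim n^{-\alpha-\frac12}$ (from Lemma \ref{bound} and the regularity of $X,X^n$),
\[
\sigma(X^n_{\eta_n(s)})-\sigma(X_s)=\sigma'(X_s)\big(X^n_{\eta_n(s)}-X^n_s\big)+\sigma'(X_s)\big(X^n_s-X_s\big)+\rho^n_s,
\]
where $n^{\alpha+\frac12}\int_0^t(t-s)^\alpha\rho^n_s\,dW_s\to0$ since $\|\rho^n_s\|_p\lesssim n^{-(\alpha+\frac12)(1+\beta)}$. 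Using $X^n_s-X_s=n^{-\alpha-\frac12}Y^n_s$ and $Y^n_s=A^n_s+Y^{n,1}_s$ this gives
\[
Y^{n,1}_t=\int_0^t(t-s)^\alpha\sigma'(X_s)\,Y^{n,1}_s\,dW_s+\int_0^t(t-s)^\alpha\sigma'(X_s)\,\Xi^n_s\,dW_s+o(1),
\]
with $\Xi^n_s=A^n_s+n^{\alpha+\frac12}(X^n_{\eta_n(s)}-X^n_s)$. Expanding $A^n_s$ and the Euler increment $X^n_{\eta_n(s)}-X^n_s$, cancelling, and replacing $\sigma(X^n_{\eta_n(\cdot)})$ by $\sigma(X_{\eta_n(\cdot)})$ at a cost tending to $0$, one finds that $\Xi^n_s$ is, modulo negligible terms, the sum over the disjoint intervals $[\eta_n(s),s]$ and $[0,\eta_n(s)]$ of a fresh-increment part $-n^{\alpha+\frac12}\sigma(X_{\eta_n(s)})\int_{\eta_n(s)}^s(s-r)^\alpha dW_r$ and an $A^n$-type part $n^{\alpha+\frac12}\int_0^{\eta_n(s)}\sigma(X_{\eta_n(r)})[(\eta_n(s)-\eta_n(r))^\alpha-(s-r)^\alpha]\,dW_r$.

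Now put $N^n_t=\int_0^t\sigma'(X_s)\Xi^n_s\,dW_s$, a continuous martingale, so that the inhomogeneous term above equals $\int_0^t(t-s)^\alpha dN^n_s$. The key step is the identification of the limiting bracket
\[
\langle N^n\rangle_t=\int_0^t\sigma'(X_s)^2(\Xi^n_s)^2\,ds\ \xrightarrow{P}\ \kappa_2^2\int_0^t(\sigma'\sigma)(X_s)^2\,ds.
\]
Indeed, since $\Xi^n_s$ depends on $W$ after $\eta_n(s)$ only through a single mesh increment, $\E[(\Xi^n_s)^2\mid\F_{\eta_n(s)}]$ is $\sigma(X_{\eta_n(s)})^2$ times an explicit function of the phase $\{ns\}$ (plus the contributions of the earlier $\sigma(X_{\eta_n(r)})^2$, weighted by a summable kernel), and a law-of-large-numbers argument over the mesh cells — the off-diagonal terms being conditionally centered and of size $O(n^{-1/2})$ — together with the averaging of the oscillating phase yield the limit, the constant $\kappa_2^2$ being the sum of the self-contributions of the two pieces of $\Xi^n_s$ and of their cross term, which unwind into the elementary term $\tfrac{2\alpha+1}{2(\alpha+1)^2}$ and the three series $\kappa_3,\kappa_4,\kappa_5$. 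Likewise $\langle N^n,W\rangle_t=\int_0^t\sigma'(X_s)\Xi^n_s\,ds\xrightarrow{P}0$. By the asymptotic Knight theorem, $(W,N^n)\Rightarrow(W,N)$ with $N_t=\kappa_2\int_0^t(\sigma'\sigma)(X_s)\,dB_s$ for a Brownian motion $B$ independent of $W$, hence $\int_0^\cdot(\cdot-s)^\alpha dN^n_s\Rightarrow\kappa_2\int_0^\cdot(\cdot-s)^\alpha(\sigma'\sigma)(X_s)\,dB_s$. Replacing $Y^{n,1}_s$ by $Y^{n,1}_{\eta_n(s)}$ in the displayed identity at a vanishing cost and invoking the continuity in the input of the solution map of the linear Volterra equation (Picard iteration plus Gr\"onwall-type estimates, the singular kernel handled through $L^p$ bounds) then transfers this to $Y^{n,1}\Rightarrow Y^{\infty,1}$, in particular in finite-dimensional distributions.

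The main obstacle is the identification of $\lim_n\langle N^n\rangle_t$: on the probabilistic side one must show that the random bracket concentrates — that the off-diagonal contributions over the mesh and the difference between $X$ and its discretizations are negligible uniformly in $s$, despite the factor $n^{\alpha+\frac12}$ inside $\Xi^n_s$ — and on the computational side one must collect all self- and cross-contributions of the two pieces of $\Xi^n_s$, average over the mesh phase, and verify the convergence of the singular series $\kappa_3,\kappa_4,\kappa_5$ (again using $\alpha<\tfrac12$). A secondary difficulty is the last step: since the input $N^n$ is controlled in a path sense only through a separate tightness estimate in $C([0,T])$ (a Kolmogorov criterion based on $\|N^n_t-N^n_{t'}\|_p\lesssim|t-t'|^{1/2}$), one must pass weak convergence through the solution map of the (singular) linear Volterra equation with some care.
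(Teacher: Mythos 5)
The genuine gap is in your part (i). You write $A^n_{j/n}=\sum_{i=1}^{j}\sigma(X^n_{(i-1)/n})\zeta^n_i$ and propose to conclude via a martingale CLT with ``a Lindeberg check''. But the conditional variance of the $i$-th summand is $\sigma(X^n_{(i-1)/n})^2 w_k$ with $k=j-i+1$ and $w_k=\int_0^1[k^\alpha-(k-x)^\alpha]^2dx$, and since $\sum_k w_k=\kappa_1^2<\infty$ the whole limit variance is carried by the $O(1)$ cells adjacent to $t$: the last summand alone ($k=1$) contributes the fixed positive fraction $w_1/\kappa_1^2$. Hence the (conditional) Lindeberg condition fails --- $E\bigl[\xi_{nj}^2\mathbf{1}_{\{|\xi_{nj}|>\epsilon\}}\bigr]$ does not tend to zero whenever $\sigma(X_t)\not\equiv 0$ --- and the martingale CLT cannot be invoked; without asymptotic negligibility of the individual summands the limit of a martingale array need not be mixed Gaussian. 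The correct argument (the one the paper uses) exploits precisely the concentration you observed: since the contribution of $[0,(t-\delta)_+]$ to the variance is $O(n^{-2}\delta^{2\alpha-1})$, one may freeze the coefficient and replace $\sigma(X^n_{\eta_n(s)})$ by $\sigma(X_t)$ at a cost of order $\delta^{2\alpha+1}$; the remaining integral $n^{\alpha+\frac12}\int_0^{\eta_n(t)}\psi_{n,1}(s,\eta_n(t))\,dW_s$ then has a deterministic integrand, so it is exactly Gaussian, and your (correct) covariance computations --- including the decorrelation from $W$ and the degenerate cross-covariance \eqref{covZ} --- finish the proof. Your limiting constants are right; only the tool used to produce Gaussianity is inapplicable.

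Your part (ii) follows the paper's route (Taylor expansion, reduction to the quantity $\Xi^n_s$, identification of the bracket, asymptotic Knight theorem, transfer through the linear Volterra equation), and your rewriting of $\Xi^n_s$ as a fresh-increment part on $[\eta_n(s),s]$ plus an $A^n$-type part on $[0,\eta_n(s)]$ is an algebraically correct (and arguably cleaner) reorganization of the paper's $\Theta^n_s$. Two steps as written are not justified, though both are repairable. First, from $(W,N^n)\Rightarrow(W,N)$ in $C([0,T])$ you cannot deduce ``hence $\int_0^\cdot(\cdot-s)^\alpha dN^n_s\Rightarrow\kappa_2\int_0^\cdot(\cdot-s)^\alpha(\sigma'\sigma)(X_s)\,dB_s$'': the map $N\mapsto\int_0^t(t-s)^\alpha dN_s$ is not continuous for the uniform topology (the kernel is singular or non-differentiable at $s=t$). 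The paper avoids this by putting the kernels inside the martingale from the start, applying Knight to $\sum_i\rho_i\int_0^{\tau\wedge t_i}(t_i-s)^\alpha\sigma'(X_s)\,\Xi^n_s\,dW_s$ for each finite collection of times $t_i$. Second, replacing $Y^{n,1}_s$ by $Y^{n,1}_{\eta_n(s)}$ with mesh $1/n$ does not make the solution a fixed continuous functional of the input, since the functional then changes with $n$; the paper instead discretizes the Volterra equation with a mesh $1/m$ independent of $n$, transfers the finite-dimensional convergence for each fixed $m$, and then lets $m\to\infty$ uniformly in $n$ via a H\"older estimate on the discretized solution.
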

 
 In the next result we analyze the joint convergence of $A^n_t$ and  $Y^{n,1} _t$ for a fixed $t\in [0,T]$.
 \begin{theorem} \label{thmjoint}
 Under the assumptions of Theorem \ref{mainthm}, for any $ t\in [0,T]$, the two-dimensional random variable
 $(A^n_{\eta_n(t)}, Y^{n,1}_t)$ converges in distribution to $(\kappa_1 \sigma(X_t) Z_t, Y^{\infty,1}_t)$, where $Z_t$ and $Y^{\infty,1}_t$ are the random variables described in points (i) and (ii) of Theorem  \ref{mainthm} and $Z_t$ and $Y^{\infty,1}_t$  are independent. 
 \end{theorem}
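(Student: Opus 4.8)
The plan is to derive Theorem~\ref{thmjoint} from Theorem~\ref{mainthm} by adding, to the analysis of $Y^{n,1}_t$ already carried out in the proof of part (ii), the control of $A^n_{\eta_n(t)}$ together with its \emph{asymptotic independence} from everything that generates the limit $Y^{\infty,1}_t$. Fix $t\in[0,T]$, put $j=\lfloor nt\rfloor$ so that $\eta_n(t)=j/n$, and observe that for this fixed $t$ both terms in \eqref{decom} are It\^o integrals $\int_0^{\cdot}(\,\cdot\,)\,dW_s$; moreover, the proof of Theorem~\ref{mainthm}(ii) realizes $Y^{n,1}_t$, up to an $L^2$-negligible remainder, as a functional of $W$ and of a family of martingales $G^n=\int_0^{\cdot}\gamma^n\,dW$ which are asymptotically orthogonal to $W$ and whose limit is the Brownian motion $B$, while $Y^{\infty,1}_t$ is a measurable functional of $(W,B)$ (the limiting Volterra equation being linear in $Y^{\infty,1}$, its solution is the resolvent series in the two independent drivers). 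It therefore suffices to prove that $A^n_{\eta_n(t)}$ converges, \emph{jointly} with $(W,G^n)$, to $\kappa_1\sigma(X_t)Z_t$ with $Z_t\sim N(0,1)$ independent of $\sigma(W,B)$: the marginal limits are those of Theorem~\ref{mainthm}(i)--(ii), and $Z_t\perp\sigma(W,B)$ forces $Z_t\perp Y^{\infty,1}_t$.

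To this end I would write $A^n_{\eta_n(t)}=M^n_{\eta_n(t)}=\sum_{k=0}^{j-1}\xi^n_k$, where $M^n_u=n^{\alpha+\frac12}\int_0^u[(\eta_n(t)-\eta_n(s))^\alpha-(\eta_n(t)-s)^\alpha]\sigma(X^n_{\eta_n(s)})\,dW_s$ for $u\in[0,\eta_n(t)]$ and $\xi^n_k=n^{\alpha+\frac12}\sigma(X^n_{k/n})\int_{k/n}^{(k+1)/n}\big[(\tfrac{j-k}{n})^\alpha-(\tfrac jn-s)^\alpha\big]\,dW_s$ is a sequence of orthogonal martingale increments. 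For each fixed $m\ge1$ the ``leading block'' $\xi^n_{j-m}$ is an It\^o integral of a deterministic integrand over the window $[\,t-\tfrac mn,\ t-\tfrac{m-1}n\,]$, which shrinks onto the fixed time $t$; such shrinking-window integrals converge, jointly in $m$ and jointly with $(W,G^n)$, to $\sigma(X_t)\zeta_m$, where the $\zeta_m$ are independent centered Gaussians with variance $\int_0^1[m^\alpha-(m-u)^\alpha]^2\,du$ and $(\zeta_m)_{m\ge1}$ is independent of $\sigma(W,B)$ --- and it is precisely this joint convergence, with the stated independence, that the vanishing of the predictable cross-brackets $\langle M^n,W\rangle_t$ and $\langle M^n,G^n\rangle_t$ delivers through the stable-limit/asymptotic-Knight machinery of part (ii). The remaining blocks are uniformly negligible: orthogonality of martingale increments and the uniform $L^p$-bound on $X^n$ (Lemma~\ref{bound}) give $\E\big[\big(\sum_{m>M}\xi^n_{j-m}\big)^2\big]\le C\sum_{m>M}\int_0^1[m^\alpha-(m-u)^\alpha]^2\,du\to0$ as $M\to\infty$, the series being convergent --- precisely because $\alpha<\tfrac12$ --- with sum $\kappa_1^2$. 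Letting $n\to\infty$ and then $M\to\infty$, and using $\sum_{m\ge1}\zeta_m=\kappa_1 Z_t$ with $Z_t\sim N(0,1)$ independent of $\sigma(W,B)$, we obtain the asserted joint convergence of $(A^n_{\eta_n(t)},Y^{n,1}_t)$.

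The crux is the vanishing of the two cross-brackets. In both cases the integrand of $M^n$ carries the factor $(\eta_n(t)-\eta_n(s))^\alpha-(\eta_n(t)-s)^\alpha$, while the integrands of $W$ and of the (suitably renormalized) martingales $G^n$ pair with it through weights whose $L^p$-norms are bounded uniformly in $s$ and $n$; hence, by Cauchy--Schwarz, $\E\big|\langle M^n,W\rangle_t\big|$ and $\E\big|\langle M^n,G^n\rangle_t\big|$ are both at most a constant times $n^{\alpha+\frac12}\int_0^{\eta_n(t)}\big|(\eta_n(t)-\eta_n(s))^\alpha-(\eta_n(t)-s)^\alpha\big|\,ds$, which the substitution $s=(k+u)/n$ turns into a constant times $n^{-1/2}\sum_{m=1}^{j}\int_0^1\big|m^\alpha-(m-u)^\alpha\big|\,du$. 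The summand here is $0$ when $\alpha=0$ and of order $|\alpha|\,m^{\alpha-1}$ otherwise; since $j\asymp n$, the sum is $O(1)$ for $\alpha\le0$ and $O(n^{\alpha})$ for $\alpha\in(0,\tfrac12)$, so the bound is $O(n^{-1/2})+O(n^{\alpha-\frac12})\to0$ exactly because $\alpha<\tfrac12$ --- the same exponent restriction that drives parts (i) and (ii). I expect the main obstacle to be making $\langle M^n,G^n\rangle_t\to0$ rigorous: this needs the explicit form of the auxiliary martingales from the proof of Theorem~\ref{mainthm}(ii) and the verification of the uniform $L^p$-bounds on their integrands (morally this should be harmless, since $\xi^n_k$ is a first-chaos increment supported on a single mesh interval whereas $B$ is extracted from a near-diagonal second-chaos effect, and these are orthogonal). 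Once that is in place, the displayed estimate closes the argument and the rest is a repackaging of Theorem~\ref{mainthm} with routine stable-limit bookkeeping.
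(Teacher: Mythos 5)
Your strategy --- write $A^n_{\eta_n(t)}$ as a sum of martingale increments over mesh intervals, kill the tail blocks, and extract independence of the Gaussian limit from $(W,B)$ via vanishing cross-brackets fed into the ``asymptotic-Knight machinery of part (ii)'' --- is genuinely different from the paper's, and it has a gap at precisely the point you identify as the crux. The vanishing of $\langle M^n,W\rangle_t$ and $\langle M^n,G^n\rangle_t$ does not by itself deliver the claimed joint convergence with independence: the asymptotic version of Knight's theorem invoked in part (ii) (\cite[Chapter~XIII, Theorem~2.3]{RY}) requires each component martingale, after time change by its own bracket, to exhaust the half-line --- in practice, the brackets must converge to continuous, nondegenerate limits. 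For the martingale $\tau\mapsto M^n_\tau$ generating $A^n_{\eta_n(t)}$ this fails structurally: its bracket $n^{2\alpha+1}\int_0^\tau\psi_{n,1}^2(s,\eta_n(t))\sigma^2(X^n_{\eta_n(s)})\,ds$ tends to $0$ for every fixed $\tau<t$ (by \eqref{ineq2}) and collapses onto a single jump of size $\kappa_1^2\sigma^2(X_t)$ at $\tau=t$; this degeneracy is exactly why $A^n$ has no process-level limit (see the Remark following Proposition \ref{first_term}). Nor can you retreat to stable convergence of $N^{n,1}_{\eta_n(t)}$ with respect to $\sigma(W)$ alone: stable convergence lets you pass to a joint limit with a \emph{fixed} functional of $W$, or with a sequence converging in probability, but $Y^{n,1}_t$ converges only in law (its limit carries the extra Brownian motion $B$), so an additional argument is unavoidable. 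The parts of your sketch that are sound: the bound $E|\langle M^n,W\rangle_t|\le Cn^{(\alpha\vee 0)-\frac12}$ matches the paper's computation in Proposition \ref{first_term}, and the tail estimate $\sum_{m>M}\int_0^1[m^\alpha-(m-u)^\alpha]^2\,du\to0$ is correct. A secondary slip: for $\langle M^n,G^n\rangle_t$ the weights are \emph{not} ``bounded uniformly in $s$'' when $\alpha<0$, since the integrand of $M^{(n)}$ carries the factor $(t_i-s)^\alpha$; this is repairable but your displayed estimate does not cover it as written.

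The paper sidesteps all of this with a characteristic-function argument localized at $t-\delta$, in which the independence is \emph{exact} before passing to the limit rather than asymptotic. One shows: (a) the portion of $N^{n,1}_{\eta_n(t)}$ coming from $[0,t-\delta]$ has $L^2$-norm $O(n^{\alpha-\frac12}\delta^{\alpha-\frac12})$, hence is negligible for fixed $\delta$; (b) the portion of $Y^{n,1}_t$ coming from $[t-\delta,t]$ has $L^2$-norm $O(\delta^{\alpha+\frac12})$ uniformly in $n$; (c) what remains is $\exp(i\mu F_n+i\lambda G_n)$ with $F_n$ an $\mathcal{F}_{t-\delta}$-measurable It\^o integral and $G_n=n^{\alpha+\frac12}\int_{t-\delta}^{\eta_n(t)}\psi_{n,1}(s,\eta_n(t))\,dW_s$ a Wiener integral of a \emph{deterministic} function of the post-$(t-\delta)$ increments of $W$, hence exactly independent of $\mathcal{F}_{t-\delta}$ with deterministic Gaussian characteristic function converging to $\exp(-\frac{\lambda^2}{2}\kappa_1^2)$. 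Letting $n\to\infty$ and then $\delta\downarrow0$ factorizes the limit. If you want to salvage your route, you would need a stable limit theorem for martingale arrays allowing a component whose limiting bracket has a jump (such results exist in Jacod's framework), together with a rigorous proof of the joint convergence of the leading blocks with $(W,G^n)$; as written, the step from vanishing brackets to ``$Z_t\perp\sigma(W,B)$'' is asserted rather than proved, and it is the whole content of the theorem.
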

 
 As a consequence of theorems \ref{mainthm} and \ref{thmjoint} we obtain the following result.
 \begin{corollary} \label{cor}
  Under the assumptions of Theorem \ref{mainthm}, for any $ t\in [0,T]$,  the normalized  error  converges in distribution to
 $ \kappa_1 \sigma(X_t) Z_t+ Y^{\infty,1}_t$, where $Z_t$ and $Y^{\infty,1}_t$ are the random variables described in points (i) and (ii) of Theorem  \ref{mainthm} and $Z_t$ and $Y^{\infty,1}_t$  are independent. 

 \end{corollary}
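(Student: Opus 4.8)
The plan is to derive Corollary \ref{cor} from Theorem \ref{thmjoint} and the decomposition \eqref{decom} by means of a routine time-regularity estimate and Slutsky's theorem. Evaluating \eqref{decom} at the grid point $\eta_n(t)$ we have
\[
Y^n_{\eta_n(t)} = A^n_{\eta_n(t)} + Y^{n,1}_{\eta_n(t)},
\]
so it remains to identify the limit in law of the right-hand side. Theorem \ref{thmjoint} already provides the joint convergence in distribution of the pair $(A^n_{\eta_n(t)}, Y^{n,1}_t)$ to $(\kappa_1\sigma(X_t) Z_t, Y^{\infty,1}_t)$ with independent components, so the only point that needs to be checked is that replacing $Y^{n,1}_t$ by $Y^{n,1}_{\eta_n(t)}$ is harmless, i.e.\ that $Y^{n,1}_{\eta_n(t)} - Y^{n,1}_t \to 0$ in probability as $n\to\infty$.

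To prove this convergence I would estimate $\E\big[ |Y^{n,1}_t - Y^{n,1}_{\eta_n(t)}|^2\big]$. From the definition of $Y^{n,1}$, writing the difference of the two It\^o integrals and applying the isometry, this quantity is bounded by $C\,n^{2\alpha+1}$ times the sum of the boundary term $\int_{\eta_n(t)}^t (t-s)^{2\alpha}\, \E\big[|\sigma(X^n_{\eta_n(s)}) - \sigma(X_s)|^2\big]\,ds$ and the kernel term $\int_0^{\eta_n(t)} \big[(t-s)^\alpha - (\eta_n(t)-s)^\alpha\big]^2\, \E\big[|\sigma(X^n_{\eta_n(s)}) - \sigma(X_s)|^2\big]\,ds$. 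The Lipschitz property of $\sigma$, the uniform bound $\|Y^n_s\|_{L^2}\le C$ from Lemma \ref{bound} (which gives $\|X^n_s - X_s\|_{L^2}\le C\,n^{-\alpha-\frac12}$), and the $L^2$-H\"older estimate $\|X_s - X_{\eta_n(s)}\|_{L^2}\le C\,(s-\eta_n(s))^{\alpha+\frac12}$ (obtained by the same type of computation applied to \eqref{volterra}) together give $\|X^n_{\eta_n(s)} - X_s\|_{L^2}\le C\,n^{-\alpha-\frac12}$ and hence $\E\big[|\sigma(X^n_{\eta_n(s)}) - \sigma(X_s)|^2\big]\le C\,n^{-(2\alpha+1)}$, uniformly in $s\in[0,T]$. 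Combining this with the elementary bounds $\int_{\eta_n(t)}^t (t-s)^{2\alpha}\,ds \le C\,n^{-(2\alpha+1)}$ and $\int_0^{\eta_n(t)} \big[(t-s)^\alpha - (\eta_n(t)-s)^\alpha\big]^2\,ds \le C\,(t-\eta_n(t))^{2\alpha+1}\le C\,n^{-(2\alpha+1)}$, which both rely on $2\alpha+1>0$ and, for the second one, on the change of variables $v=\eta_n(t)-s$ together with the convergence of $\int_0^\infty [(v+1)^\alpha - v^\alpha]^2\,dv$ for $|\alpha|<\tfrac12$, we obtain
\[
\E\big[ |Y^{n,1}_t - Y^{n,1}_{\eta_n(t)}|^2\big] \le C\, n^{2\alpha+1}\cdot n^{-2(2\alpha+1)} = C\, n^{-(2\alpha+1)}\longrightarrow 0.
\]

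Finally I would put the pieces together. Writing $Y^n_{\eta_n(t)} = \big(A^n_{\eta_n(t)} + Y^{n,1}_t\big) + \big(Y^{n,1}_{\eta_n(t)} - Y^{n,1}_t\big)$, Theorem \ref{thmjoint} together with the continuous mapping theorem applied to the addition map give $A^n_{\eta_n(t)} + Y^{n,1}_t \Rightarrow \kappa_1\sigma(X_t)Z_t + Y^{\infty,1}_t$; the second summand tends to $0$ in probability by the previous step; hence Slutsky's theorem yields $Y^n_{\eta_n(t)} \Rightarrow \kappa_1\sigma(X_t)Z_t + Y^{\infty,1}_t$, and the independence of $Z_t$ and $Y^{\infty,1}_t$ is exactly the one asserted in Theorem \ref{thmjoint}. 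The only step that is not purely soft is the time-increment estimate above: because the kernel $(t-s)^\alpha$ is singular when $\alpha<0$, one has to make sure that the loss coming from the normalization factor $n^{\alpha+\frac12}$ is still compensated by the smallness $t-\eta_n(t)\le 1/n$ of the mesh, and the computation confirms that it is, with a genuine polynomial gain.
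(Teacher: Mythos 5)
Your argument is correct and is exactly the route the paper intends: the paper states the corollary as an immediate consequence of Theorems \ref{mainthm} and \ref{thmjoint} without writing out a proof, and your combination of the decomposition $Y^n_{\eta_n(t)}=A^n_{\eta_n(t)}+Y^{n,1}_{\eta_n(t)}$, Theorem \ref{thmjoint}, and Slutsky's theorem is the natural way to make it explicit. The one step you add --- the $L^2$ estimate $E[|Y^{n,1}_t-Y^{n,1}_{\eta_n(t)}|^2]\le C n^{-(2\alpha+1)}$, using $E[|\xi^n_s|^2]\le Cn^{-(2\alpha+1)}$ from \eqref{ineq7} together with the kernel bounds --- is a genuine detail the paper leaves implicit, and your verification of it is sound.
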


\section{Preliminaries}
In this section we will present  some basic properties of the solution to the stochastic Volterra equation \eqref{volterra} and its associated Euler approximation scheme \eqref{euler}.
Throughout this paper   $C$ will denote  a generic constant, which may depend on  $\alpha$, the coefficient $\sigma$ and the terminal time $T$.
Also,  $C_p$ will denote  a generic constant, which may depend on  $p$,  $\alpha$, the coefficient $\sigma$ and the terminal time $T$.
Moreover, we will denote by $\{\mathcal{F}_t, t\in [0,T]\}$ the increasing family of $\sigma$-algebras generated by the Brownian motion $W$.
 
 Assuming that the coefficient $\sigma$ is Lipschitz, there exist a unique solution to equation \eqref{volterra}, such that
 for any $p\ge 2$,
 \begin{equation}  \label{sup}
 \sup_{0\leq t\leq T} E[|X_t|^p]<C_p.
 \end{equation}
 Furthermore,  for any $p\ge 2$ and for any $s,t\in [0,T]$,
 \begin{equation}  \label{rgla}
  E[ |X_t-X_s|^p] \leq C_p |t-s|^{p(\alpha+\frac 12)}.
\end{equation}
We refer, for instance, to Theorem 2.1 and Theorem 2.2 of  \cite{LHH} for the proof of these estimates in the case $\alpha<0$.  The case $\alpha\ge 0$ is easier and can be obtained by similar arguments.

The Euler approximation $X^n_t$ satisfies similar estimates. That means, for any $p\ge 2$, we have
 \begin{equation}  \label{sup2}
 \sup_{0\leq t\leq T} E[|X^n_t|^p]<C_p
 \end{equation}
and  for any $s,t\in [0,T]$,
 \begin{equation}  \label{rglo}
  E[ |X^n_t-X^n_s|^p] \leq C_p |t-s|^{p(\alpha+\frac 12)}.
\end{equation}
Again, these estimates can be obtained  as in the proof of  Theorem 2.1 and Theorem 2.2 of  \cite{LHH}, when $\alpha<0$, the case $\alpha\ge 0$ being simpler.
 
The following result provides the rate of convergence in  $L^p$, for any $p\ge 2$, for the Euler approximation scheme. The case $\alpha<0$ and $t=\eta_n(t)$ was established in Theorem 2.3 of \cite{LHH}. For the sake of completeness we include the main ideas of the proof.
\begin{lemma} \label{bound}
Let $Y^n_t$ be the normalized  error defined in \eqref{merror}. Then,  there exists a constant $C_p>0$  such that for any $t\in [0,T]$, and for any $p\ge 2$, we have
\begin{equation}  \label{rate}
E[| Y^n_t|^p] \le C_p.
\end{equation}
\end{lemma}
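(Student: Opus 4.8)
The plan is to bound $E[|Y^n_t|^p]$ by bounding each of the two terms $A^n_t$ and $Y^{n,1}_t$ in the decomposition \eqref{decom} separately in $L^p$.

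\emph{Step 1: The term $A^n_t$.} Write $A^n_t = n^{\alpha+\frac12}\int_0^t [(t-\eta_n(s))^\alpha - (t-s)^\alpha]\sigma(X^n_{\eta_n(s)})\,dW_s$. By the Burkholder--Davis--Gundy inequality,
\[
E[|A^n_t|^p] \le C_p\, n^{p(\alpha+\frac12)}\, E\left[ \left( \int_0^t [(t-\eta_n(s))^\alpha - (t-s)^\alpha]^2 \sigma(X^n_{\eta_n(s)})^2\, ds \right)^{p/2} \right].
\]
Using the linear growth of $\sigma$ together with the moment bound \eqref{sup2}, it suffices to show that $n^{2\alpha+1}\int_0^t [(t-\eta_n(s))^\alpha - (t-s)^\alpha]^2\, ds$ is bounded uniformly in $t$ and $n$. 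After the change of variable $s = t - u/n$ this integral becomes a Riemann-type sum comparable to $\int_0^\infty [\lceil v\rceil^\alpha - v^\alpha]^2\, dv$ (with $v = n(t-s)$), which converges precisely because $\alpha < \frac12$ (the integrand behaves like $v^{2\alpha-2}$ at infinity and is integrable near $0$ when $\alpha > -\frac12$, after accounting for the unit-length cells). This connects directly to the finiteness of the constant $\kappa_1$ in \eqref{kappa1}. Hence $\sup_{t,n} E[|A^n_t|^p] \le C_p$.

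\emph{Step 2: The term $Y^{n,1}_t$.} Here $Y^{n,1}_t = n^{\alpha+\frac12}\int_0^t (t-s)^\alpha[\sigma(X^n_{\eta_n(s)}) - \sigma(X_s)]\,dW_s$. Split $\sigma(X^n_{\eta_n(s)}) - \sigma(X_s) = [\sigma(X^n_{\eta_n(s)}) - \sigma(X_{\eta_n(s)})] + [\sigma(X_{\eta_n(s)}) - \sigma(X_s)]$. For the second bracket the Lipschitz property of $\sigma$ and the regularity estimate \eqref{rgla} give $E[|\sigma(X_{\eta_n(s)}) - \sigma(X_s)|^p] \le C_p |s - \eta_n(s)|^{p(\alpha+\frac12)} \le C_p n^{-p(\alpha+\frac12)}$, which exactly cancels the prefactor $n^{p(\alpha+\frac12)}$ after applying BDG and Minkowski's integral inequality (the kernel $(t-s)^{2\alpha}$ is integrable on $[0,T]$ since $\alpha > -\frac12$). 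For the first bracket, the Lipschitz property gives $|\sigma(X^n_{\eta_n(s)}) - \sigma(X_{\eta_n(s)})| \le C |X^n_{\eta_n(s)} - X_{\eta_n(s)}| = C\, n^{-\alpha-\frac12}|Y^n_{\eta_n(s)}|$, so this contribution to $E[|Y^{n,1}_t|^p]$ is bounded by $C_p \int_0^t (t-s)^{2\alpha}\,(\ldots)\,ds$ with integrand controlled by $\sup_{r\le s} E[|Y^n_r|^p]$ — after raising to the power $p/2$ via BDG and applying Minkowski, one gets $E[|Y^n_t|^p] \le C_p + C_p\int_0^t (t-s)^{2\alpha} \sup_{r \le s} E[|Y^n_r|^p]\,ds$ (the $C_p$ absorbing the $A^n_t$ bound and the second-bracket bound). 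A singular (fractional) Gronwall lemma — valid because $2\alpha > -1$ — then closes the estimate and yields $\sup_{t\in[0,T]} E[|Y^n_t|^p] \le C_p$ uniformly in $n$.

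\emph{Main obstacle.} The delicate point is Step 1: showing that the $A^n_t$ term, despite carrying the full normalizing factor $n^{\alpha+\frac12}$ and involving the \emph{singular} difference $(t-\eta_n(s))^\alpha - (t-s)^\alpha$ near $s=t$, remains bounded in $L^p$. One must carefully perform the scaling $v = n(t-s)$ and check that the resulting sum/integral $\sum_{k\ge1}\int_0^1 [k^\alpha - (k-x)^\alpha]^2\,dx$ converges both near $v=0$ (where $k=1$ and the integrand $[1 - (1-x)^\alpha]^2$ is integrable for $\alpha > -\frac12$) and as $v\to\infty$ (where by the mean value theorem $[k^\alpha - (k-x)^\alpha]^2 \le \alpha^2 k^{2\alpha-2}$, summable for $\alpha < \frac12$). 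This is exactly where the hypothesis $\alpha\in(-\frac12,\frac12)$ is used in full strength, and it must be handled before the relatively routine fractional Gronwall argument in Step 2 can be applied.
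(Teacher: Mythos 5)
Your proof is correct and follows essentially the same route as the paper: both rest on the deterministic kernel estimate $n^{2\alpha+1}\int_0^t[(t-\eta_n(s))^\alpha-(t-s)^\alpha]^2\,ds\le C$ (Lemma \ref{alpha1}, which is where $\alpha\in(-\tfrac12,\tfrac12)$ enters, exactly as you note) combined with BDG/Minkowski and a singular Gronwall inequality with kernel $(t-s)^{2\alpha}$. The only cosmetic difference is that you bound $A^n_t$ separately in $L^p$ before running Gronwall on the remainder, whereas the paper folds both contributions into a single inequality for $\|Y^n_t\|_p^2$.
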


\begin{proof}
We can write, using Burkholder  and Minkowski's inequalities, the Lipschitz property of $\sigma$ and   the estimate \eqref{sup},
\begin{align*}
\| Y^n_t \|_p^2 &\le C_p n^{2\alpha+1}  \int_0^t   \| (t-s)^\alpha \sigma(X_s) - (t-\eta_n(s))^\alpha \sigma(X_{\eta_n(s)})  \|_p^2ds \\
& \le C_p   \int_0^t   (t-s)^\alpha  \| Y^n_s \|_p^2  ds+ C_p n^{2\alpha+1}
\int_0^t   [ (t-s)^\alpha   - (t-\eta_n(s))^\alpha  ]^2ds.
\end{align*}
Lemma \ref{lema1}  allows us to write
\[
\| Y^n_t \|_p^2  \le  C_p   \int_0^t   (t-s)^\alpha  \| Y^n_s \|_p^2  ds +C,
\]
which implies the desired estimate by an application of  an extended version of  Gronwall's inequality (see, for instance, \cite{Bru,YGD}).
\end{proof}

As a consequence of \eqref{rgla} and \eqref{rate}, we obtain, for any $s,t \in [0,T]$,
\begin{equation} \label{ineq7}
 E [|X^n_t-X_s|^p] \leq C_p \left( |t-s|^{p(\alpha+\frac 12)} +n^{-p(\alpha+\frac 12)}\right).
 \end{equation}

\section{Proof of  Theorem \ref{mainthm}}
Recall that $X= \{X_t, t \in [0,T]\}$ is the solution to the stochastic Volterra equation  \eqref{volterra} and the normalized
Euler scheme approximation for $X$  is defined in \eqref{merror}.

In order to prove Theorem \ref{mainthm} we make the following decomposition of the  normalized error
\[
 Y^n_t =  A^n_t  +Y^{n,1}_t=:A^n_t + C^n_t + Z^n_t,
\]
 where
  \begin{equation} \label{A}
  A^n_t= n^{\alpha+\frac 12}\int^t_0   [(t-\eta_n(s))^{\alpha} - (t-s)^{\alpha}]    \sigma(X^n_{\eta_{n}(s)})\, dW_s,
  \end{equation}
  \begin{equation} \label{C}
   C^n_t=n^{\alpha+\frac 12}\int^t_0    (t-s)^{\alpha}   [   \sigma(X^n_{\eta_n(s)}) -\sigma(X^n_s)]\, dW_s
   \end{equation}
   and
   \begin{equation} \label{Z}
   Z^n_t= n^{\alpha+\frac 12}\int^t_0 (t-s)^{\alpha}   [ \sigma(X^n_{s})- \sigma(X_{s})]\, dW_s.
 \end{equation}
 Then, the proof of Theorem \ref{mainthm} will be done in several steps included in the next subsections.
 
 \subsection{Modified error}
 The next proposition will allow us to replace the  three terms   $A^n_t$, $C^n_t$ and $Z^n_t$ 
 defined in \eqref{A}, \eqref{C} and \eqref{Z}, respectively, by simpler terms that have the same asymptotic behavior.
  \begin{proposition}\label{clean}
Let $A^n_t$, $C^n_t$ and $Z^n_t$ be defined in \eqref{A}, \eqref{C} and \eqref{Z}, respectively.  Then we claim that the following convergences are true 
\begin{equation} \label{A1}
\lim_{n\rightarrow \infty}  \sup_{t\in [0,T]}E[ |A^n_t- \widetilde{A}^n_t|^2] =0,
\end{equation}
\begin{equation} \label{C11}
\lim_{n\rightarrow \infty}  \sup_{t\in [0,T]}E[ |C^n_t- \widetilde{C}^n_t|^2] =0
\end{equation}
and
\begin{equation} \label{Z1}
\lim_{n\rightarrow \infty}  \sup_{t\in [0,T]} E[ |Z^n_t- \widetilde{Z}^n_t|^2] =0,
\end{equation}
where
\begin{equation} \label{A2}
  \widetilde{A}^n_t=n^{\alpha+\frac 12} \sigma(X_t)\int^t_0   [(t-\eta_n(s))^{\alpha} - (t-s)^{\alpha}]dW_s,
  \end{equation}
 \begin{equation} \label{C2}
\widetilde{C}^n_t=n^{\alpha+\frac 12}\int^t_0    (t-s)^{\alpha}     \sigma'(X_s)\left(X^n_{\eta_n(s)} -X^n_s\right)\, dW_s
\end{equation}
and
 \begin{equation} \label{Z2}
 \widetilde{Z}^n_t=n^{\alpha+\frac 12} \int^t_0 (t-s)^{\alpha}   \sigma'(X_s)\left(X^n_{s}-X_{s}\right)\, dW_s.
 \end{equation}
 \end{proposition}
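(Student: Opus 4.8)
The plan is to handle all three convergences \eqref{A1}, \eqref{C11}, \eqref{Z1} by one common recipe: form the difference of the two Itô integrals, apply the Itô isometry, bound the resulting integrand using the moment estimates \eqref{sup}--\eqref{ineq7} together with a first order Taylor expansion of $\sigma$, and close the estimate with the deterministic kernel bound of Lemma \ref{lema1}. Claims \eqref{C11} and \eqref{Z1} are routine and I would dispose of them first; the term $A^n_t$ carries the real difficulty.

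For \eqref{Z1}, writing $\sigma(X^n_s)-\sigma(X_s)-\sigma'(X_s)(X^n_s-X_s)=\big(\int_0^1[\sigma'(X_s+\tau(X^n_s-X_s))-\sigma'(X_s)]\,d\tau\big)(X^n_s-X_s)$ and using the $\beta$-H\"older continuity of $\sigma'$, the integrand defining $Z^n_t-\widetilde Z^n_t$ is dominated in absolute value by $C(t-s)^\alpha|X^n_s-X_s|^{1+\beta}$. Since \eqref{rate} gives $\|X^n_s-X_s\|_p=n^{-\alpha-\frac12}\|Y^n_s\|_p\le C_p n^{-\alpha-\frac12}$ for every $p$, the It\^o isometry yields
\[
E[|Z^n_t-\widetilde Z^n_t|^2]\le Cn^{2\alpha+1}\int_0^t(t-s)^{2\alpha}\,E[|X^n_s-X_s|^{2+2\beta}]\,ds\le Cn^{-(2\alpha+1)\beta},
\]
where I have used $\int_0^t(t-s)^{2\alpha}\,ds\le T^{2\alpha+1}/(2\alpha+1)<\infty$ (which is where $2\alpha+1>0$ enters). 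As $\beta>0$ this goes to $0$ uniformly in $t$. Claim \eqref{C11} is proved in the same manner: the Taylor remainder now produces a factor $(|X^n_s-X_s|^\beta+|X^n_{\eta_n(s)}-X^n_s|^\beta)$ multiplying the increment $X^n_{\eta_n(s)}-X^n_s$; a use of the Cauchy--Schwarz inequality on $\Omega$, together with \eqref{rate} and \eqref{rglo} (both of these increments being $O(n^{-\alpha-\frac12})$ in every $L^p$, the second because $|s-\eta_n(s)|\le n^{-1}$), again bounds $E[|C^n_t-\widetilde C^n_t|^2]$ by $Cn^{-(2\alpha+1)\beta}\to0$, uniformly in $t$.

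For \eqref{A1} the difference is $A^n_t-\widetilde A^n_t=n^{\alpha+\frac12}\int_0^t[(t-\eta_n(s))^\alpha-(t-s)^\alpha]\,[\sigma(X^n_{\eta_n(s)})-\sigma(X_t)]\,dW_s$, so by the It\^o isometry, the Lipschitz property of $\sigma$, the estimate \eqref{ineq7}, and $|t-\eta_n(s)|\le|t-s|+n^{-1}$,
\[
E[|A^n_t-\widetilde A^n_t|^2]\le C\int_0^t n^{2\alpha+1}[(t-\eta_n(s))^\alpha-(t-s)^\alpha]^2\big(|t-s|^{2\alpha+1}+n^{-(2\alpha+1)}\big)\,ds.
\]
The part coming from $n^{-(2\alpha+1)}$ is at most $Cn^{-(2\alpha+1)}\cdot n^{2\alpha+1}\int_0^t[(t-\eta_n(s))^\alpha-(t-s)^\alpha]^2\,ds$, which is $O(n^{-(2\alpha+1)})\to0$ since Lemma \ref{lema1} bounds $n^{2\alpha+1}\int_0^t[(t-\eta_n(s))^\alpha-(t-s)^\alpha]^2\,ds$ by a constant.

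The remaining, genuinely delicate, point — and the main obstacle — is to show that
\[
\sup_{t\in[0,T]}\;n^{2\alpha+1}\int_0^t[(t-\eta_n(s))^\alpha-(t-s)^\alpha]^2|t-s|^{2\alpha+1}\,ds\longrightarrow0 .
\]
A crude use of Lipschitz continuity only shows the left side is bounded, not that it vanishes; one has to exploit that the mass of the kernel $[(t-\eta_n(s))^\alpha-(t-s)^\alpha]^2$ concentrates near $s=t$, which is exactly where the weight $|t-s|^{2\alpha+1}$ is small. Concretely I would substitute $u=n(t-s)$: writing $nt=\lfloor nt\rfloor+\theta$ with $\theta\in[0,1)$ one gets $t-\eta_n(s)=n^{-1}(\theta-\lfloor\theta-u\rfloor)$, and the last display becomes $n^{-(2\alpha+1)}\int_0^{nt}[(\theta-\lfloor\theta-u\rfloor)^\alpha-u^\alpha]^2u^{2\alpha+1}\,du$. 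Near $u=0$ the integrand behaves like $u^{4\alpha+1}$, hence is integrable there precisely because $\alpha>-\frac12$; for large $u$ it is $O(u^{4\alpha-1})$ by the mean value theorem (uniformly in $\theta$), so the $u$-integral over $[0,nt]$ is $O(n^{(4\alpha)\vee 0+\varepsilon})$ for every $\varepsilon>0$, uniformly in $t\le T$. Multiplying by $n^{-(2\alpha+1)}$ and choosing $\varepsilon$ small gives a quantity that tends to $0$ because $-\frac12<\alpha<\frac12$. (Alternatively, one invokes the appropriate weighted version of Lemma \ref{lema1}.) Combining the three estimates proves the proposition.
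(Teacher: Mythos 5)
Your treatments of \eqref{C11} and \eqref{Z1} are correct and essentially the same as the paper's: you write the nonlinearity as $\sigma'$ evaluated at an intermediate point times the relevant increment, use the $\beta$-H\"older continuity of $\sigma'$ together with \eqref{rate} and \eqref{rglo}, and close with the It\^o isometry. In both of these cases the integrand is $\mathcal{F}_s$-adapted, so the isometry is legitimate, and the resulting rate $n^{-\beta(2\alpha+1)}$ matches the paper's.

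The proof of \eqref{A1}, however, has a genuine gap at its very first step. You write
\[
A^n_t-\widetilde A^n_t=n^{\alpha+\frac12}\int_0^t\bigl[(t-\eta_n(s))^{\alpha}-(t-s)^{\alpha}\bigr]\,\bigl[\sigma(X^n_{\eta_n(s)})-\sigma(X_t)\bigr]\,dW_s
\]
and then apply the It\^o isometry. But $\sigma(X_t)$ is $\mathcal{F}_t$-measurable, not $\mathcal{F}_s$-measurable for $s<t$: in $\widetilde A^n_t$ the factor $\sigma(X_t)$ sits \emph{outside} the stochastic integral, and it cannot be pulled inside an It\^o integral, nor does the It\^o isometry apply to the resulting anticipating integrand. (Interpreted as a Skorokhod integral, the identity would acquire a Malliavin-derivative trace term and the isometry an extra correction, neither of which you estimate.) This anticipativity is precisely the difficulty the paper's proof is built to circumvent: it fixes $\delta>0$, splits the integral at $(t-\delta)_+$, compares $\sigma(X^n_{\eta_n(s)})$ on $[(t-\delta)_+,t]$ with the \emph{adapted} quantity $\sigma(X_{(t-\delta)_+})$ (so the isometry can be used there and \eqref{ineq7} yields a contribution of order $\delta^{2\alpha+1}$), and handles the two genuinely anticipating products $\sigma(X_t)\int_0^{(t-\delta)_+}\cdots\,dW$ and $[\sigma(X_{(t-\delta)_+})-\sigma(X_t)]\int_{(t-\delta)_+}^t\cdots\,dW$ by Cauchy--Schwarz with $L^4$ norms, using the sharper far-from-diagonal kernel bound \eqref{ineq2}. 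Your weighted-kernel computation showing $\sup_t n^{2\alpha+1}\int_0^t[(t-\eta_n(s))^{\alpha}-(t-s)^{\alpha}]^2|t-s|^{2\alpha+1}\,ds\to0$ --- which you single out as the main obstacle --- is correct as a deterministic estimate, but it addresses a secondary issue; the real obstruction in \eqref{A1} is the non-adaptedness of $\sigma(X_t)$, which your argument does not confront. As written, the proof of \eqref{A1} is not valid.
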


\begin{proof}
The proof will be done in three steps.

\medskip \noindent
{\it Proof of  \eqref{A1}:}
Fix $\delta > 0$. With the notation
\begin{equation}
\label{psi1}\psi_{n,1}(s,t):=(t-\eta_n(s))^{\alpha} - (t-s)^{\alpha}, \quad 0\le s<t \le T,
\end{equation} 
we can write
\begin{align*}
 A^n_{t}-  \widetilde{A}^n_t
&=n^{\alpha+\frac 12} \left(\int_0^{t}\psi_{n,1}(s,t)\sigma(X^n_{\eta_{n}(s)})\, dW_s-\sigma(X_t)\int_0^{t}\psi_{n,1}(s,t)dW_s \right)\\
&= n^{\alpha+\frac 12}  \Bigg(\int_0^{(t-\delta)_+}\psi_{n,1}(s,t)  
\sigma(X^n_{\eta_{n}(s)}) dW_s -\sigma(X_t)\int_0^{(t-\delta)_+ }\psi_{n,1}(s,t)dW_s  \\
&\qquad+ \int_{(t-\delta)_+}^{t}\psi_{n,1}(s,t)   \left[
\sigma(X^n_{\eta_n(s)} -\sigma(X_{(t-\delta)_+}) \right] dW_s \\
& \qquad  + \left[\sigma(X_{(t-\delta)_+})-\sigma(X_t)\right]\int_{(t-\delta)_+}^{t}\psi_{n,1}(s,t)dW_s \Bigg).
\end{align*} 
    Now, using triangle inequality and Cauchy-Schwartz inequality, we obtain
    \begin{align*}
E[ |A^n_{t}-\widetilde{A}^n_t |^2]&\leq C n^{2\alpha+1} \int_0^{(t-\delta)_+}\psi^2_{n,1}(s,t)  
E(\sigma^2(X^n_{\eta_{n}(s)})  )ds \\
& \qquad +C  n^{2\alpha+1} \left[E\left(\sigma^4(X_t)\right)E\left[
\left(\int_0^{(t-\delta)_+}\psi_{n,1}(s,t)dW_s\right)^4\right]\right]^{\frac 12}\nonumber\\
&\qquad+C n^{2\alpha+1} \int_{(t-\delta)_+}^{t}\psi^2_{n,1}(s)   E\left[
\left| \sigma(X^n_{\eta_n(s)}) -\sigma(X_{(t-\delta)_+})  \right|^2\right] ds\\
&\qquad  +C n^{2\alpha+1} \left[ E\left ( \left| \sigma(X_{(t-\delta)_+})-\sigma(X_t)\right|^4 \right)E\left( \left| \int_{(t-\delta)_+}^{t}\psi_{n,1}(s,t)dW_s\right|^4\right) \right]^{\frac12}\\
&=: C n^{2\alpha+1} (A_1+A_2+A_3+A_4).
\end{align*}
By the estimate  \eqref{sup2}, taking into account that $\sigma$ is Lipschitz, we have 
\[
L_1:=\sup_{n\ge 1} \sup_{s\in [0,T]} E(\sigma^2(X^n_{\eta_{n}(s)})  ) <\infty.
\]
 Therefore, applying the estimate \eqref{ineq2}, we can write
\begin{equation} \label{e3}
A_1 \leq  L_1\int_0^{(t-\delta)_+} \psi_{n,1}^2(s,t)  ds \leq  L_1 C_2 n^{-2}\delta^{2\alpha-1}.
\end{equation}
 For the term $A_2$,  using that the random variable  $\int_0^{(t-\delta)_+}\psi_{n,1}(s,t)dW_s$ is Gaussian and applying again the estimate  \eqref{ineq2}, we obtain
\begin{align}
A_2 &\le \sqrt{3} \sup_{t\in [0,T]}  \left[E\left(\sigma^4(X_t)\right) \right]^\frac 12  \int_0^{(t-\delta)_+} \psi_{n,1}^2(s,t)  ds  \nonumber  \\
& \le  \sqrt{3} \sup_{t\in [0,T]}  \left[E\left(\sigma^4(X_t)\right) \right]^\frac 12 C_2  n^{-2}\delta^{2\alpha-1}. \label{e4}
\end{align}
  For  the term $A_3$, using the Lipschitz property of $\sigma$  and inequality \eqref{ineq7}, we obtain, assuming $\delta > \frac 1n$,
   \begin{align*}
   E\left[ \left|
\sigma(X^n_{\eta_n(s)}) -\sigma(X_{(t-\delta)_+}) \right|^2 \right]&\leq C \left[\sup_{s\in [(t-\delta)_+,t]}|\eta_n(s)-(t-\delta)_+| \right]^{2\alpha+1}+\frac{C}{n^{2\alpha+1}}\\
&\leq C \delta^{2\alpha+1}+\frac{C}{n^{2\alpha+1}}\\& \leq C \delta^{2\alpha+1}. \end{align*}
Therefore, using \eqref{ineq1},
 \begin{equation} \label{e5}
 A_3\leq C \delta^{2\alpha +1} \int_{0}^{t}\psi_{n,1}^2(s,t) ds
 \leq C \delta^{2\alpha+1} n^{-2\alpha-1}.
 \end{equation}
Finally, for the term $A_4$, using the Lipschitz property of $\sigma$ and the estimates \eqref{sup} and  \eqref{ineq1}, yields
 \begin{equation} \label{e6}
     A_4\leq C  \delta^{2\alpha+1}\int_{0}^{t}\psi_{n,1}^2(s,t)  ds \leq C  \delta^{2\alpha+1}n^{-2\alpha-1}.
 \end{equation}
The estimates \eqref{e3}, \eqref{e4}, \eqref{e5} and \eqref{e6} allow us to write
\[
\limsup_{n\rightarrow\infty} \sup_{t\in [0,T]}E\left[ |A^n_{t}- \widetilde{A}^n_t|^2\right]\leq C\delta^{2\alpha+1},
\]
where the constant $ C$ does not depend on $\delta$. Taking into account that $\delta>0$ is arbitrary, we conclude the proof of  \eqref{A1}.

\medskip \noindent
{\it Proof of  \eqref{C11}:}
We can write 
\[ 
C^n_t=n^{\alpha+\frac 12} \int^{t}_0(t-s)^{\alpha} \Psi^n_s\,   \left(X^n_{\eta_n(s)} -X^n_s\right)\, dW_s,
\]
where $ \Psi^n_s=\int^1_0 \sigma'(\theta X^n_{\eta_{n}(s)}+(1-\theta)X^n_s)\, d\theta$.
 Since $\sigma'$ is $\beta$-H\"older continuous, using the definition of $\Psi^n_{s}$ we have
\begin{equation}
\label{Eq1}|\Psi^n_s-\sigma'(X^n_{s})|\leq C|X^n_{\eta_n(s)}-X^n_{s}|^\beta.
\end{equation}
Now using \eqref{rglo} and the fact that $|\eta_n(s)-s|\leq \frac{1}{n}$ we have
\begin{equation}
\label{Eq2}E[|\Psi^n_{s}-\sigma'(X_s) |^p ]\leq C n^{-p\beta(\alpha+\frac12)}.
\end{equation}
 Next coming back to proof of  \eqref{C11}, we can write, using Cauchy-Schwartz inequality,
 \begin{align*} 
 E\left[ \left| C^n_t-\widetilde{C}^n_t\right|^2 \right]& = n^{2\alpha+1}E  \left[ \left|\int^t_0 (t-s)^{\alpha}     \left(\sigma'(X_s)-\Psi^n_s\right)\left(X^n_{\eta_n(s)} -X^n_s\right)\, dW_s\right|^2 \right]\\
 &= n^{2\alpha+1}\int^t_0 (t-s)^{2\alpha}    E \left[ \left(\sigma'(X_s)-\Psi^n_s\right)^2\left(X^n_{\eta_n(s)} -X^n_s\right)^2 \right]\, ds\\
 &\leq n^{2\alpha+1}  \int^t_0 (t-s)^{2\alpha}     \left(E[(\sigma'(X_s)-\Psi^n_s)^4]\right)^{\frac12}\left(E[(X^n_{\eta_n(s)} -X^n_s)^4]\right)^{\frac 12}ds.
 \end{align*}
Finally, from  the estimates \eqref{rglo} and  \eqref{Eq2}, we obtain
\[
E\left[ \left| C^n_t-\widetilde{C}^n_t\right|^2 \right] \le C n^{-\beta(2\alpha+1)},
\]
which completes the proof of \eqref{C11}.

\medskip \noindent
{\it Proof of  \eqref{Z1}:}
Rewrite $Z^n_t  $ as $Z^n_t=n^{\alpha+\frac 12} \int^t_0 (t-s)^{\alpha} \Psi^n_s  ( X^n_{s}-X_{s})\, dW_s$. Then, we can write, using the estimates \eqref{rate} and  \eqref{Eq2},
\begin{align*} 
E\left[Z^n_t- \widetilde{Z}^n_t \right] & =n^{2\alpha+1}E  \left[ \left| \int^t_0 (t-s)^{\alpha}     \left(\sigma'(X_s)-\Psi^n_s\right)\left(X^n_{s} -X_s\right)\, dW_s\right|^2 \right]\\
&\leq n^{2\alpha+1}  \int^t_0 (t-s)^{\alpha}     \left(E[(\sigma'(X_s)-\Psi^n_s)^4]\right)^{\frac12}\left(E[(X^n_{s} -X_s)^4]\right)^{\frac 12}\, ds\\
&\leq  C n^{-\beta(2\alpha+1)}.
\end{align*}
This completes the proof of \eqref{Z1}.
 \end{proof}

  Let us denote by $\widetilde{Y}^n_t$ the solution to the following stochastic integral equation
 \begin{align}\label{reformed_error}
 \widetilde{Y}^n_t =\widetilde{A}^n_t + \widetilde{C}^n_t +\int_0^t(t-s)^{\alpha}\sigma'(X_s)\widetilde{Y}^n_s\,dW_s.
 \end{align}
 The next proposition tell us that the limit behavior of the normalized error  $Y^n_t$ is analogous to the limit of 
 $ \widetilde{Y}^n_t$.
 
 \begin{proposition}  \label{propY}
 Let $\wY^n$ be defined as in \eqref{reformed_error}. Then, we have
  \[
  \lim_{n\rightarrow \infty} \sup_{t\in [0,T] }E[|\widetilde{Y}^n_t -Y^n_t|^2] =0.
  \]
 \end{proposition}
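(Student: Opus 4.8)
The plan is to compare the true normalized error $Y^n_t$ with $\widetilde Y^n_t$ by writing an integral equation for the difference and closing it with the extended Gronwall inequality already invoked in Lemma \ref{bound}. Recall from the decomposition $Y^n_t = A^n_t + C^n_t + Z^n_t$ that $Y^n_t$ satisfies
\[
Y^n_t = A^n_t + C^n_t + n^{\alpha+\frac12}\int_0^t (t-s)^\alpha \bigl[\sigma(X^n_s)-\sigma(X_s)\bigr]\, dW_s,
\]
and the last term is exactly $Z^n_t$. The first step is to rewrite $Z^n_t$ in a form that exposes a feedback term in $Y^n_s$: using the mean value theorem as in the proof of \eqref{Z1}, $\sigma(X^n_s)-\sigma(X_s) = \Psi^n_s (X^n_s - X_s)$, and then splitting $\Psi^n_s = \sigma'(X_s) + (\Psi^n_s - \sigma'(X_s))$ and recalling $n^{\alpha+\frac12}(X^n_s-X_s)=Y^n_s$, one gets
\[
Z^n_t = \int_0^t (t-s)^\alpha \sigma'(X_s) Y^n_s\, dW_s + R^n_t,
\]
where $R^n_t = n^{\alpha+\frac12}\int_0^t (t-s)^\alpha (\Psi^n_s-\sigma'(X_s))(X^n_s-X_s)\, dW_s$. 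By Cauchy--Schwarz, \eqref{Eq2} and \eqref{ineq7} (the $s=t$ case), $\sup_{t\in[0,T]} E[|R^n_t|^2]\to 0$; in fact this is essentially the computation already done for \eqref{Z1}.

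The second step is to subtract the equation \eqref{reformed_error} for $\widetilde Y^n_t$ from the resulting equation for $Y^n_t$. Setting $D^n_t := Y^n_t - \widetilde Y^n_t$ and using Proposition \ref{clean} to replace $A^n_t$ by $\widetilde A^n_t$ and $C^n_t$ by $\widetilde C^n_t$ up to an $L^2$-error $\varepsilon_n\to 0$ uniformly in $t$, we obtain
\[
D^n_t = \int_0^t (t-s)^\alpha \sigma'(X_s) D^n_s\, dW_s + \rho^n_t,
\]
where $\rho^n_t = (A^n_t - \widetilde A^n_t) + (C^n_t - \widetilde C^n_t) + R^n_t$ satisfies $\sup_{t\in[0,T]} E[|\rho^n_t|^2] =: \varepsilon_n \to 0$. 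Applying Burkholder's and Minkowski's inequalities to the stochastic integral, using that $\sigma'$ is bounded (it is continuous and, being H\"older continuous with bounded... actually $\sigma$ is Lipschitz so $\sigma'$ is bounded), gives
\[
E[|D^n_t|^2] \le C\int_0^t (t-s)^{2\alpha} E[|D^n_s|^2]\, ds + 2\varepsilon_n
\le C\int_0^t (t-s)^{\alpha} E[|D^n_s|^2]\, ds + 2\varepsilon_n,
\]
where in the last step, when $\alpha<0$, one uses $(t-s)^{2\alpha}$ is still integrable and the extended Gronwall lemma applies directly with exponent $2\alpha>-1$; when $\alpha\ge0$ one simply bounds $(t-s)^{2\alpha}\le T^\alpha(t-s)^\alpha$ or keeps $2\alpha$. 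The extended Gronwall inequality (\cite{Bru,YGD}, as used in Lemma \ref{bound}) then yields $\sup_{t\in[0,T]} E[|D^n_t|^2] \le C\varepsilon_n \to 0$, which is the claim.

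The only point requiring genuine care — the main obstacle — is the very first step: one must be sure that $Y^n_t$ itself solves a closed integral equation, i.e.\ that the feedback term $\int_0^t(t-s)^\alpha\sigma'(X_s)Y^n_s\,dW_s$ arises cleanly from $Z^n_t$, and that the remainder $R^n_t$ from linearizing $\sigma(X^n_s)-\sigma(X_s)$ is controlled uniformly in $t$. This is handled by the Taylor expansion already used for \eqref{Z1} together with the a priori bound \eqref{rate} on $Y^n_s$ and the rate estimate \eqref{ineq7}; combining $(E[(\Psi^n_s-\sigma'(X_s))^4])^{1/2} \le C n^{-2\beta(\alpha+\frac12)}$ with $n^{2\alpha+1}(E[(X^n_s-X_s)^4])^{1/2}\le C$ gives $E[|R^n_t|^2]\le C n^{-2\beta(\alpha+\frac12)}$, which vanishes. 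Everything else is a routine Gronwall argument identical in structure to the proof of Lemma \ref{bound}.
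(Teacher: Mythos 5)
Your proof is correct and follows essentially the same route as the paper: you use Proposition \ref{clean} to replace $A^n_t$, $C^n_t$, $Z^n_t$ by their tilde versions up to a uniformly vanishing $L^2$ remainder, observe that $\widetilde Z^n_t$ is exactly the feedback term $\int_0^t(t-s)^\alpha\sigma'(X_s)Y^n_s\,dW_s$, subtract the defining equation of $\widetilde Y^n_t$, and close with the extended Gronwall inequality as in Lemma \ref{bound}. The paper's proof is just a terser version of this same argument (your $R^n_t$ is precisely the quantity $Z^n_t-\widetilde Z^n_t$ already controlled in \eqref{Z1}), so no further comparison is needed.
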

 
 \begin{proof}
 By Proposition  \ref{clean}, we can write
 \[
n^{\alpha+\frac 12}Y^n_t = R^n_t+  \widetilde{A}^n_t + \widetilde{C}^n_t + \widetilde{Z}^n_t,
\]
where $\lim_{n\rightarrow \infty} \sup_{t\in [0,T]} E[ |R^n_t|^2]=0$.  As a consequence,
\[
 \widetilde{Y}^n_t - Y^n_t =R^n_t +\int_0^t(t-s)^{\alpha}\sigma'(X_s)(\widetilde{Y}^n_s-Y^n_s)\,dW_s,
 \]
 which allow us to conclude easily the proof of the proposition.
\end{proof}

\subsection{Asymptotic behavior of $\wA^{n}_t$}
The term  $\wA^{n}_t$ is singular in the sense that for any fixed $t\ge 0$ we cannot guarantee that  $\wA^{n}_t$  converges in $L^2$. However, 
$\wA^{n}_{\eta_n(t)}$ does converges in $L^2$, as it is explained in the remark after the proof of the next proposition.   
The main result of this section is the following proposition.

\begin{proposition}\label{first_term}
The finite-dimensional distributions of the process   $\{\widetilde{A}^n_{\eta_n(t)} , t\in [0,T]\} $ converge in distribution, as $n\to \infty$,  to    those of
$ \{\kappa_1 Z_t \sigma(X_t), t\in [0,T]\}$, where $\kappa_1$ has been defined in \eqref{kappa1}  
 and $\{Z_t, t\in [0,T]\}$ is a  zero mean Gaussian process  independent of  $W$ with covariance given by \eqref{covZ}.
 \end{proposition}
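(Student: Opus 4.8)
The plan is to show that, after freezing the coefficient $\sigma(X_t)$ and working along the grid points $t=\eta_n(t)$, the random variables $\widetilde A^n_{\eta_n(t)}$ are, up to a multiplicative factor $\sigma(X_t)$ which is $\mathcal F_t$-measurable, a sequence of Wiener integrals whose integrands have disjoint (or asymptotically disjoint) supports for distinct time points, and whose $L^2$ norms converge to $\kappa_1^2$. First I would fix times $0\le t_1<\cdots<t_m\le T$ and write, using the scaling $s\mapsto s/n$ and the definition \eqref{A2},
\[
\widetilde A^n_{\eta_n(t_j)} = n^{\alpha+\frac12}\sigma(X_{t_j})\int_0^{\eta_n(t_j)} \psi_{n,1}(s,\eta_n(t_j))\, dW_s
= \sigma(X_{t_j})\, \xi^n_j,
\]
where $\xi^n_j$ is a Wiener integral. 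Writing $\eta_n(t_j)=k_j/n$, the change of variable $s=u/n$ turns $\xi^n_j$ into $\int_0^{k_j} \big[(k_j-\eta_1(u))^\alpha-(k_j-u)^\alpha\big]\, d\widetilde W_u$ for a rescaled Brownian motion $\widetilde W$, and the substitution $u = k_j - v$ together with periodicity of $\eta_1$ on unit intervals produces precisely the integrand $k^\alpha - (k-x)^\alpha$ appearing in \eqref{kappa1}; thus $\E[(\xi^n_j)^2]\to\kappa_1^2$.

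The key point is the asymptotic orthogonality: for $t_i<t_j$, the integrand for $\xi^n_i$ is supported on $[0,\eta_n(t_i)]$ while, after the reversal, the dominant mass of $\xi^n_j$'s integrand concentrates near the upper endpoint $\eta_n(t_j)$, so that $\E[\xi^n_i\xi^n_j]\to 0$. More precisely I would show the cross term is $O(n^{-1}$ times a harmonic-type sum$)$ and vanishes; this is where the estimates \eqref{ineq1}, \eqref{ineq2} from the Appendix (bounding $\int \psi_{n,1}^2$ and localized versions thereof) do the work. I would then invoke a stable/conditional central limit theorem for multiple Wiener integrals of the first order: since each $\xi^n_j$ is a first-chaos element, the vector $(\xi^n_1,\dots,\xi^n_m)$ converges in law to a centered Gaussian vector with the limiting covariance $\kappa_1^2\,\delta_{ij}$, and because the integrands of $\xi^n_j$ live on intervals of total length $\to 0$ (they are supported within distance $O(1/n)$ reversals, really on $[0,\eta_n(t_j)]$ but with $L^2$-mass that "forgets" $\mathcal F_{t_j}$ in the limit), the convergence is in fact $\mathcal F$-stable with a limit independent of $W$. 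Hence $(\sigma(X_{t_1})\xi^n_1,\dots,\sigma(X_{t_m})\xi^n_m)$ converges in law to $(\kappa_1\sigma(X_{t_1})Z_{t_1},\dots,\kappa_1\sigma(X_{t_m})Z_{t_m})$ with $Z$ as described.

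The main obstacle I anticipate is making rigorous the independence of the limit from $W$ and the "no $L^2$-convergence, but grid convergence" phenomenon: one must argue that although $\xi^n_j$ is $\mathcal F_{\eta_n(t_j)}$-measurable, its integrand's $L^2$-mass is spread over the whole interval $[0,\eta_n(t_j)]$ in a way that, against any fixed $\mathcal F$-measurable test function (or along the stable convergence test with bounded $\mathcal F$-measurable random variables), produces vanishing covariance — so the Gaussian limit decouples from $W$. The cleanest route is to verify the hypotheses of an asymptotic Knight-type theorem or a fourth-moment/chaos criterion directly for the bracket of the martingale $t\mapsto \int_0^t \psi_{n,1}(s,\eta_n(T))\,dW_s$ suitably interpreted, together with the fact that for a \emph{fixed} $t$, $\widetilde A^n_t$ does not converge (because the integrand $\psi_{n,1}(\cdot,t)$ changes its boundary behavior discontinuously as $t$ crosses grid points), which is why the statement is restricted to $t=\eta_n(t)$. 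I would handle the cross-covariance estimates via the Appendix lemmas and finish with the chaos CLT, keeping the $\sigma(X_{t_j})$ factors outside as bounded $\mathcal F_{t_j}$-measurable multipliers throughout.
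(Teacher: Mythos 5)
Your proposal is correct and its computational core coincides with the paper's proof: write $\widetilde A^n_{\eta_n(t)}=\sigma(X_{\eta_n(t)})N^{n,1}_{\eta_n(t)}$ with $N^{n,1}_t=n^{\alpha+\frac12}\int_0^t\psi_{n,1}(s,t)\,dW_s$, show by the change of variables $ns=x$, $x-j=y$ that $E[(N^{n,1}_{\eta_n(t)})^2]\to\kappa_1^2$, that $E[N^{n,1}_{\eta_n(t_1)}N^{n,1}_{\eta_n(t_2)}]\to0$ for $t_1\neq t_2$, and that $E[N^{n,1}_{\eta_n(t)}(W_b-W_a)]\to0$. The one place where you take a detour is the concluding step: you propose to invoke a stable CLT, a fourth-moment/chaos criterion, or an asymptotic Knight theorem, and you flag the independence from $W$ as the ``main obstacle.'' None of that machinery is needed, and this is exactly the simplification the paper exploits: $N^{n,1}_t$ is a Wiener integral of a \emph{deterministic} kernel, so the whole family $\bigl(N^{n,1}_{\eta_n(t_1)},\dots,N^{n,1}_{\eta_n(t_m)},W\bigr)$ is \emph{exactly} jointly Gaussian for every $n$, not merely asymptotically so. Convergence of the joint law (hence of the law of the vector multiplied by the $W$-measurable factors $\sigma(X_{\eta_n(t_j)})$, which converge in $L^p$ to $\sigma(X_{t_j})$) therefore reduces entirely to convergence of the covariance structure, and the vanishing correlation with increments of $W$ already yields independence of the limit $Z$ from $W$. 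Your three covariance computations are precisely the paper's three computations; you should simply conclude from joint Gaussianity rather than from a limit theorem for chaoses, whose hypotheses you would otherwise have to verify for no gain.
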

 
\begin{proof}
Let $\psi_{n,1}(s,t)$ be defined as in \eqref{psi1} and set
\begin{equation} \label{ene}
N_t^{n,1}=  n^{\alpha+\frac 12} \int_0^t \psi_{n,1}(s,t) dW_s.
\end{equation}
It suffices to show that the finite-dimensional distributions of the process $\{ N^{n,1}_{\eta_n(t)}, t\in [0,T]\}$ converge in distribution to those of 
$\{Z_t, t\in [0,T]\}$. 
Notice that  $\{N^{n,1}_{\eta_n(t)}, t\in [0,T]\}$ is  a centered Gaussian  process. So, we just need to show that the covariance of $\{ N^{n,1}_{\eta_n(t)}, t\in [0,T]\}$ converges to the covariance of $\{Z_t, t\in [0,T]\}$ and also that the processes   $\{ N^{n,1}_{\eta_n(t)}, t\in [0,T]\}$ and $W$ are not correlated.

Let us first compute the variance of  $N^{n,1}_{\eta_n(t)}$. We have, with the change of variables $ns=x$ and $x-j=y$,
  \begin{align}
E[(N^{n,1}_{\eta_n(t)})^2]  \label{problem}
&= n^{2\alpha +1}
 \int_0^{\eta_n(t)} \psi_{n,1}^2(s, \eta_n(t))  ds\\  \nonumber
&=  n^{2\alpha +1} \sum^{\lfloor nt\rfloor-1}_{j=0}\int^{\frac {j+1} n}_{\frac jn } \left[ \left ( \frac {\lfloor nt\rfloor}n -\frac jn  \right)^{\alpha}-\left(\frac {\lfloor nt\rfloor}n-s \right)^{\alpha}\right]^2\, ds\\ \nonumber
&=  \sum^{\lfloor nt\rfloor-1}_{j=0}\int^{ j+1  }_{j } \left[(  \lfloor nt\rfloor -j )^{\alpha}-(\lfloor nt\rfloor -x)^{\alpha}\right]^2\, dx\\ \nonumber
&=  \sum^{\lfloor nt\rfloor-1}_{j=0}\int^{ 1  }_{0 } \left[(  \lfloor nt\rfloor -j )^{\alpha}-(\lfloor nt\rfloor -j-y)^{\alpha}\right]^2\, dy\\ \nonumber
&=\sum^{\lfloor nt\rfloor}_{k=1}\int^{1}_{0}  (k^\alpha - (k-y)^\alpha)^2\, dy,  \nonumber
\end{align}
which converges to $\kappa_1^2$ as $n\to \infty$, where $\kappa_1$ is defined in \eqref{kappa1}.  
 On the other hand, for any interval $[a,b]$, with $0\le a< b \le T$, we have
\[
\lim_{n\rightarrow \infty} E[N^n_{\eta_n(t)}(W_b-W_a)] =0.
\]
Indeed, with the same computations as above, we can write
\begin{align*}
|E[N^n_{\eta_n(t)}(W_b-W_a)]|  
&=n^{\alpha+\frac 12} \left|  \sum^{\lfloor nt\rfloor-1}_{j=0}\int_{[\frac jn, \frac {j+1}n]\cap [a,b]} \left[\left(\frac{\lfloor nt\rfloor}{n}-\frac{j}{n}\right)^{\alpha}-\left(\frac{\lfloor nt\rfloor}{n}-s\right)^{\alpha}\right]\, ds  \right|\\   
&=n^{-\frac 12}  \sum^{\lfloor nt\rfloor}_{k=1}\int^{1}_{0}  |k^\alpha - (k-y)^\alpha|\, dy\\
& \le C n^{ \max\{ \alpha, 0\} - \frac 12},  
\end{align*}
which converges to zero as $n\to \infty$.   Finally, for any $0\le t_1<t_2 \le T$ we have
\begin{align*}
|E[N^n_{\eta_n(t_2)} N^n_{\eta_n(t_1)} ]|  
&= n^{2\alpha+1} \left|    \int_0^{\eta_n(t_1)} \psi_{n,1}(s, \eta_n(t_1))  \psi_{n,1}(s, \eta_n(t_2))ds\right|\\
& \le
 n  \sum^{\lfloor nt_1\rfloor-1}_{j=0}\int_{\frac jn}^{ \frac {j+1}n }  |  (\lfloor nt_2\rfloor-j)^{\alpha}-(\lfloor nt_2\rfloor -ns)^{\alpha} | \\
 & \qquad \times |(\lfloor nt_1\rfloor -j)^{\alpha}-(\lfloor nt_1\rfloor-ns)^{\alpha} |
 ds.
 \end{align*}
The change of variables $ns=x$ and $x-j=y$ allow us to write
\begin{align*}
|E[N^n_{\eta_n(t_2)} N^n_{\eta_n(t_1)} ]|   &  \le \sum^{\lfloor nt_1\rfloor-1}_{j=0}\int_{0}^{1 }  |  (\lfloor nt_2\rfloor-j)^{\alpha}-(\lfloor nt_2\rfloor -j-y)^{\alpha} | \\
 & \qquad  \times |(\lfloor nt_1\rfloor -j)^{\alpha}-(\lfloor nt_1\rfloor-j-y)^{\alpha} |
 dy\\
 &  = \sum^{\lfloor nt_1\rfloor }_{k=1}  \int_0^1   |  (\lfloor nt_2\rfloor- \lfloor nt_1\rfloor+k)^{\alpha}-(\lfloor nt_2\rfloor -\lfloor nt_1\rfloor+k-y)^{\alpha} |  \\
 & \qquad  \times |k^{\alpha}-(k-y )^{\alpha} |dy,
\end{align*}
which converges to zero as $n\to \infty$. This completes the proof of the proposition.
\end{proof}

\medskip
\noindent
{\bf Remark}. 
Notice that if we replace $\eta_n(t)$  by $t$ in the proof of Proposition \ref{first_term}, then instead of \eqref{problem}, we obtain
\[
E[(N^{n,1}_{t})^2]  
= \sum^{\lfloor nt\rfloor}_{k=0}\int^{1 \wedge (nt-k)}_{0} \left[(nt-k)^{\alpha}-(nt-k-y)^{\alpha}\right]^2\, dy,
\]
and this term does not seem to converge as $n$ tends to infinity due to the fluctuation of $nt - \lfloor nt \rfloor$ between $0$ and $1$ as $n$ tends to infinity.

\subsection{Decomposition of the modified normalized error $\wY^n_t$}
Because of the particular behavior of  $\wA^{n}_t$, we will consider the  decomposition
\[
\wY^n_t= \wA^{n}_t + \wY^{n,1}_t.
\]
From Proposition \ref{propY} we deduce that
\[
\lim _{n\rightarrow \infty} \sup_{t\in [0,T]} E[| \wY^{n,1}_t -Y^{n,1}_t|^2] =0,
\]
where  $Y^{n,1}_t$ has been introduced in  \eqref{decom}.

The process 
   $\wY^{n,1}_t= \wY^n_t -\wA^{n}_t$ satisfies the equation
\begin{equation} \label{dnr1}
\wY^{n,1}_t =  \widetilde{C}^n_t +\int_0^t(t-s)^{\alpha}\sigma'(X_s) (\wY^{n,1}_s + \wA^n_s)\,dW_s.
\end{equation}
With the notation
\begin{align}   \nonumber
\wA^{n,1}_t& =\int_0^t(t-s)^{\alpha}\sigma'(X_s)  \wA^n_s\,dW_s \\
&= n^{\alpha+\frac 12} \int_0^t(t-s)^{\alpha}\sigma'(X_s)  \sigma(X_s) \left( \int_0^s \psi_{n,1}(u,s)dW_u \right)dW_s, \label{dnr3}
\end{align}
we can write equation \eqref{dnr1} as
\begin{equation} \label{dnr2}
\wY^{n,1}_t = \wA^{n,1}_t  + \widetilde{C}^n_t +\int_0^t(t-s)^{\alpha}\sigma'(X_s) \wY^{n,1}_s\,dW_s.
\end{equation}

 \subsection{Asymptotic behavior of $\wA^{n,1}_t+\widetilde{C}^n_t$}  \label{sec3}
 In this section we will study the limit as $n$ tends to infinity of the   random process  $\{\wA^{n,1}_t+\wC^n_t, t\in [0,T]\}$ where $\wA^{n,1}_t$
 has been introduced in  \eqref{dnr3} and $\wC^n_t$ is defined in   \eqref{C2}.
 The main result is the following proposition.
 
 \begin{proposition} \label{prop1}
 The finite-dimensional distributions of the process $\{\wA^{n,1}_t+\wC^n_t, t\in [0,T]\}$ converge in law to those of
 \[
\left\{ \kappa_2  \int^{t}_0(t-s)^{\alpha}(\sigma'\sigma)(X_s)dB_s , t \in [0,T] \right\},
 \]
 where $B=\{ B _t, t\in [0,T] \}$ is a   standard Brownian motion independent of $W$ and $\kappa_2$  is the constant defined in \eqref{kappa2}.
 \end{proposition}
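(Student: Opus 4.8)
The plan is to show that the process $\wA^{n,1}_t + \wC^n_t$ can be written, up to a term that vanishes in $L^2$ uniformly in $t$, as a single Wiener integral of the form $\int_0^t (t-s)^\alpha (\sigma'\sigma)(X_s)\, dM^n_s$ for an auxiliary martingale-type input $M^n$, and then to invoke the asymptotic Knight theorem to identify the limit as $\kappa_2 \int_0^t(t-s)^\alpha(\sigma'\sigma)(X_s)\,dB_s$ with $B$ a Brownian motion independent of $W$. First I would expand $\wC^n_t$ using $X^n_{\eta_n(s)} - X^n_s = -\int_{\eta_n(s)}^s (s-u)^\alpha \sigma(X^n_{\eta_n(u)})\, dW_u + \int_{\eta_n(s)}^s[(\eta_n(s)-\eta_n(u))^\alpha - (s-u)^\alpha]\sigma(X^n_{\eta_n(u)})\,dW_u$ (splitting according to \eqref{euler}), and replace $\sigma(X^n_{\eta_n(u)})$ by $\sigma(X_s)$ inside these small-window integrals, at the cost of an $L^2$-negligible error controlled by \eqref{rglo}, \eqref{ineq7} and the Lipschitz property; similarly in $\wA^{n,1}_t$ the inner integral $\int_0^s \psi_{n,1}(u,s)\,dW_u$ stays as is. This reduces both terms to double stochastic integrals with the common outer integrand $(t-s)^\alpha(\sigma'\sigma)(X_s)$ and an inner Wiener integral over a neighbourhood of $s$ depending only on $W$ and the deterministic kernels.

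Next I would set $N^n_s := n^{\alpha+\frac12}\big(\int_0^s\psi_{n,1}(u,s)\,dW_u - \int_{\eta_n(s)}^s (s-u)^\alpha\,dW_u + \int_{\eta_n(s)}^s[(\eta_n(s)-\eta_n(u))^\alpha-(s-u)^\alpha]\,dW_u\big)$, so that $\wA^{n,1}_t+\wC^n_t = \int_0^t(t-s)^\alpha(\sigma'\sigma)(X_s)\,dN^n_s$ modulo negligible terms. The key computation is then the quadratic variation-type bracket: one must show that for $f,g$ bounded adapted, $\langle \int_0^\cdot f\,dN^n, \int_0^\cdot g\,dN^n\rangle_t \to \kappa_2^2\int_0^t fg\,ds$ in probability, and that $\langle \int_0^\cdot f\,dN^n, W\rangle_t \to 0$. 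Writing $N^n$ as a stochastic integral $\int_0^\cdot \phi_n(u,\cdot)\,dW_u$ against the kernel $\phi_n(u,s) = n^{\alpha+\frac12}[\psi_{n,1}(u,s)\mathbf 1_{u<s} - (s-u)^\alpha\mathbf1_{\eta_n(s)\le u<s} + ((\eta_n(s)-\eta_n(u))^\alpha-(s-u)^\alpha)\mathbf1_{\eta_n(s)\le u<s}]$, the bracket $\langle N^n\rangle$ involves $\int_0^s \phi_n(u,s)^2\,du$, whose limit decomposes into exactly four pieces matching $\kappa_3$ (the $\psi_{n,1}$ self-term, computed as in \eqref{problem} but with the full range, giving the first sum in \eqref{kappa3}), the term $\frac{2\alpha+1}{2(\alpha+1)^2}$ (the $\int_{\eta_n(s)}^s(s-u)^{2\alpha}$ self-term, $= n^{2\alpha+1}\frac{(s-\eta_n(s))^{2\alpha+1}}{2\alpha+1}$ averaged over the fractional part, giving $\int_0^1 y^{2\alpha}\,dy \cdot(\text{correction})$), $\kappa_4$ (the $[(\eta_n(s)-\eta_n(u))^\alpha-(s-u)^\alpha]^2$ self-term), and $\kappa_5$ (the cross term between $\psi_{n,1}$ and the last kernel — note the first two new kernels are supported on $[\eta_n(s),s]$ where $\psi_{n,1}(u,s) = (s-\eta_n(s)+\text{frac})^\alpha - (s-u)^\alpha$, and also the cross term with the $(s-u)^\alpha$ piece, which one checks integrates to something telescoping so only $\kappa_5$ survives). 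The decorrelation with $W$ follows as in Proposition \ref{first_term}, since $E[\phi_n(u,s)]$-type integrals carry a spare factor $n^{-1/2}$.

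Finally, having established the bracket convergence and the asymptotic orthogonality to $W$, I would apply the asymptotic Knight theorem (in the form used later in the paper for the term $Y^{n,1}$) to the continuous local martingales $s\mapsto \int_0^s f\,dN^n$: their time-changed versions converge to a Brownian motion $B$ with variance clock $\kappa_2^2\int_0^\cdot f^2$, independent of $W$ because of the vanishing cross-bracket, and then a stability-of-stochastic-integrals argument (using that $(t-s)^\alpha(\sigma'\sigma)(X_s)$ is a fixed, sufficiently regular integrand and the kernel $(t-s)^\alpha$ is square-integrable for $\alpha>-\frac12$) upgrades this to finite-dimensional convergence of $\int_0^t(t-s)^\alpha(\sigma'\sigma)(X_s)\,dN^n_s$ to $\kappa_2\int_0^t(t-s)^\alpha(\sigma'\sigma)(X_s)\,dB_s$. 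The main obstacle is the bracket computation: carefully tracking the three kernel pieces of $\phi_n$ on the boundary cell $[\eta_n(s),s]$, verifying that the cross term between $\psi_{n,1}$ and $(s-u)^\alpha$ (and any genuinely $t$-dependent contributions) either telescopes away or is negligible, and matching the surviving limits to the precise constants $\kappa_3,\kappa_4,\kappa_5$ and $\frac{2\alpha+1}{2(\alpha+1)^2}$ appearing in \eqref{kappa2} — this is where the arithmetic with the change of variables $ns=x$, $x-j=y$ must be done with care, uniformly in $t$ and in the fractional part $nt-\lfloor nt\rfloor$.
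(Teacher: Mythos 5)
Your overall strategy is the paper's: form a martingale whose integrand is $(t-s)^\alpha\sigma'(X_s)$ times an inner fluctuation, prove convergence of its bracket to $\kappa_2^2\int_0^t(t-s)^{2\alpha}(\sigma'\sigma)^2(X_s)\,ds$ and of its bracket with $W$ to zero, and invoke the asymptotic Knight theorem. The problem is that the central object you feed into this scheme is wrong. From \eqref{euler},
\[
X^n_s-X^n_{\eta_n(s)}=\int_0^{\eta_n(s)}\bigl[(s-\eta_n(u))^{\alpha}-(\eta_n(s)-\eta_n(u))^{\alpha}\bigr]\sigma(X^n_{\eta_n(u)})\,dW_u+\int_{\eta_n(s)}^{s}(s-\eta_n(u))^{\alpha}\sigma(X^n_{\eta_n(u)})\,dW_u ,
\]
so the increment contains a \emph{full-history} integral over $[0,\eta_n(s)]$ with the kernel $\psi_{n,2}$ of \eqref{psi2}, and on the last cell the Euler kernel is the constant $(s-\eta_n(s))^{\alpha}$, not $(s-u)^{\alpha}$. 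Your two correction pieces are both supported on $[\eta_n(s),s)$, where $(\eta_n(s)-\eta_n(u))^{\alpha}=0^{\alpha}$ is meaningless, and the history term --- which is precisely what produces one of the three series constants (the term $\sum_k\int_0^1[(x+k)^\alpha-k^\alpha]^2dx$) --- is absent from your kernel $\phi_n$. Your accounting of the constants is correspondingly off: $\tfrac{2\alpha+1}{2(\alpha+1)^2}$ is not a single self-term but the sum of the last-cell self-term $\tfrac1{2(\alpha+1)}$ and the cross term between $\psi_{n,1}$ and the last-cell kernel, which contributes $\tfrac{\alpha}{2(\alpha+1)^2}$ and does \emph{not} telescope away; the cross term that vanishes is the one between the last-cell piece and the history piece. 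With your kernel the bracket limit would not equal $\kappa_2^2$.

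Two further steps you treat as routine are where most of the work lies. First, replacing $\sigma(X^n_{\eta_n(u)})$ by a quantity indexed at $s$ inside the history integral is not a small-window replacement: $u$ ranges over all of $[0,\eta_n(s)]$ and $X_{\eta_n(u)}$ is not close to $X_s$, so \eqref{rglo}, \eqref{ineq7} and Lipschitz continuity do not suffice; one needs a $\delta$-localization (the part of the integral over $[0,\eta_n(s)-\delta]$ is negligible because $\psi_{n,2}$ is small there, and only on the remaining window $[\eta_n(s)-\delta,\eta_n(s)]$ may the coefficient be frozen). Second, the bracket carries the random weight $(\sigma'\sigma)^2(X_s)$ multiplying second-chaos fluctuations, and passing from the unweighted to the weighted limit requires a small-blocks/big-blocks argument (freeze the weight on blocks of length $\tau/M$ and use $L^4$ bounds on the chaos terms); convergence ``in probability for bounded adapted $f,g$'' does not come for free. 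Finally, your ``$dN^n_s$'' is not a semimartingale differential, since $\phi_n(u,s)$ depends on $s$; the clean way, as in the paper, is to keep the outer integral as $dW_s$ with integrand $(t_i-s)^\alpha\sigma'(X_s)\Theta^n_s$, where $\Theta^n_s$ collects the inner integrals, and compute $\langle M^{(n)}\rangle$ from that. Once the kernel is corrected and these two arguments are supplied, your plan coincides with the paper's proof.
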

 
\begin{proof}   
Fix points  $t_1, \dots, t_Q \in [0,T]$ and  real numbers $\rho_i$,$1\le i \le Q$. Define the martingale 
\begin{align*}
M^{(n)}_\tau &=  \sum_{i=1}^Q \rho_i \int_0^{\tau \wedge t_i}(t_i-s)^{\alpha}\sigma'(X_s)  \wA^{n}_s dW_s \\
& \qquad +
 n^{\alpha+\frac 12}  \sum_{i=1}^Q \rho_i \int_0^{\tau \wedge t_i}(t_i-s)^{\alpha}\sigma'(X_s)  (X^n_{\eta_n(s)} - X^n_s) dW_s \\
 &=
n^{\alpha+\frac 12}  \sum_{i=1}^Q \rho_i \int_0^{\tau \wedge t_i}(t_i-s)^{\alpha}\sigma'(X_s)
\Theta^n_s dW_s,
 \end{align*}
 where
 \begin{equation} \label{theta1}
 \Theta^n_s :=\sigma(X_s)  
 \left( \int_0^s \psi_{n,1}(u,s) dW_u \right)
 +  X^n_{\eta_n(s)} - X^n_s.
 \end{equation}
 Then, in order to prove the proposition, it suffices to prove that,    as $n$ tends to infinity,  $ M^{(n)}_\tau$ converges in distribution to
 \[
\kappa_2 ^2   \sum_{i=1}^Q \rho_i   \int^{\tau \wedge t_i}_0(t_i-s)^{\alpha}(\sigma'\sigma)(X_s)dB_s,
\]
where   $B=\{ B _t, t\in [0,T]\}$ is a   standard Brownian motion independent of $W$.
  By the asymptotic version of Knight's theorem 
(as presented in \cite[Chapter~XIII,~Theorem~2.3]{RY}),  it suffices to show that the following two conditions hold:

\medskip
\noindent
{\bf (C1)} As $n$ tends to infinity,  the quadratic variation $\langle M^{(n)} \rangle_\tau$ 
given by
\begin{equation} \label{C1}
\langle M^{(n)} \rangle_\tau = 
n^{2\alpha+1}    \int_0^{\tau } \left|  \sum_{i=1}^Q \rho_i  \mathbf{1}_{[0,t_i]}(\tau) (t_i-s)^{\alpha}\sigma'(X_s)
 \Theta^n_s \right| ^2 ds,
\end{equation}
converges in $L^2$,  uniformly in $\tau  \in [0,T]$, to
\[
\kappa_2^2 \int_0^\tau \left|  \sum_{i=1}^Q \rho_i  \mathbf{1}_{[0,t_i]}(\tau) (t_i-s)^{\alpha}(\sigma'\sigma)(X_s)  \right|^2 ds.
\]
 
\medskip
\noindent
{\bf (C2)} As $n$ tends to infinity,   the joint variation
\[
\langle M^{(n)}, W \rangle_\tau
= n^{\alpha+\frac 12}    \int_0^{\tau }   \sum_{i=1}^Q \rho_i  \mathbf{1}_{[0,t_i]}(\tau) (t_i-s)^{\alpha}\sigma'(X_s)
 \Theta^n_s  ds
 \]
converges to zero in $L^2$, uniformly in $\tau  \in [0,T]$.

By linearity, to show properties  \textbf{(C1)}  and \textbf{(C2)}, it suffices to prove that   the following two properties hold true:

\medskip
\noindent
{\bf (C3)}
 For any  $t_1,t_2   \in [0,T]$ 
 \begin{align*}
 &\lim_{n\rightarrow \infty}
 n^{2\alpha+1}    \int_0^{\tau \wedge t_1 \wedge t_2  } (t_1-s)^{\alpha}  (t_2-s)^{\alpha} (\sigma'(X_s)
 \Theta^n_s )^2 ds \\
 & \qquad = \kappa_2^2 \int_0^{\tau \wedge t_1 \wedge t_2  } (t_1-s)^{\alpha}  (t_2-s)^{\alpha} 
(\sigma'\sigma)^2(X_s) ds,
\end{align*}
where the convergence holds in $L^2$, uniformly in $\tau  \in [0,T]$.

\medskip
\noindent
{\bf (C4)}
 For any  $t   \in [0,T]$ 
\[
 \lim_{n\rightarrow \infty} n^{\alpha+\frac 12}      \int_0^{\tau \wedge t  } (t-s)^{\alpha}    \sigma'(X_s)
 \Theta^n_s  ds=0,
 \]
 where the convergence holds in $L^2$, uniformly in $\tau  \in [0,T]$. 

\medskip
\noindent
{\it Proof of {\bf (C3)}}:  We will make use of the   notation
\begin{equation} \label{psi2}
\psi_{n,2}(u,s)=(s-\eta_n(u))^{\alpha}-(\eta_n(s)-\eta_n(u))^{\alpha}.
\end{equation}
Recall that   $\Theta^n_s$ has been defined in \eqref{theta1}.
Then, we can write
\begin{align*}
\Theta^n_s&= \sigma(X_s)  
  \int_0^s \psi_{n,1}(u,s) dW_u 
 +  X^n_{\eta_n(s)} - X^n_s   \\
 &= \sigma(X_s)  
  \int_0^s \psi_{n,1}(u,s) dW_u 
 +  \int^s_{\eta_n(s)}\left(s-\eta_n(u)\right)^{\alpha}\sigma(X^n_{\eta_n(u)})\, dW_u \\
 &  \qquad + \int^{\eta_n(s)}_0 \psi_{n,2}(u,s)\, \sigma(X^n_{\eta_n(u)})\, dW_u.
 \end{align*}
 The first step in the proof of {\bf (C3)} consists on replacing $\sigma(X^n_{\eta_n(u)})$ by
 $\sigma(X_{\eta_n(u)})$ in the expression of $\Theta^n_s$. That is, if we define
 \begin{align} \notag
\widehat{\Theta}^n_s&= \sigma(X_s)  
  \int_0^s \psi_{n,1}(u,s) dW_u 
 +  X^n_{\eta_n(s)} - X^n_s   \\ \notag
 &= \sigma(X_s)  
  \int_0^s \psi_{n,1}(u,s) dW_u 
 +  \int^s_{\eta_n(s)}\left(s-\eta_n(u)\right)^{\alpha}\sigma(X_{\eta_n(u)})\, dW_u \\  \label{hattheta}
 &  \qquad + \int^{\eta_n(s)}_0 \psi_{n,2}(u,s)\, \sigma(X_{\eta_n(u)})\, dW_u,
 \end{align}
 then
 \begin{equation} \label{est1}
 \lim_{n\rightarrow \infty} \sup_{s\in [0,T]}n^{2\alpha+1}  E[ |\widehat \Theta^n_s - \Theta^n_s|^2]=0.
 \end{equation}
 It is easy to check that \eqref{est1} will guarantee that  if {\bf (C3)} holds for $\widehat{\Theta}^n_s$, it also holds for $\Theta^n_s$.
 The convergence \eqref{est1} is a consequence of the limits
  \begin{equation} \label{est2}
 \lim_{n\rightarrow \infty} \sup_{s\in [0,T]}n^{2\alpha+1}  \int_{\eta_n(s)}^s (s-\eta_n(u))^{2\alpha} E[ | \sigma(X^n_{\eta_n(u)}) - \sigma(X_{\eta_n(u)})|^2] du =0
 \end{equation}
 and
  \begin{equation} \label{est3}
   \lim_{n\rightarrow \infty} \sup_{s\in [0,T]}n^{2\alpha+1}  \int_0^{\eta_n(s)} \psi_{n,2}^2(u,s)  E[ | \sigma(X^n_{\eta_n(u)}) - \sigma(X_{\eta_n(u)})|^2] du =0.
 \end{equation}
  The convergences \eqref{est2} and \eqref{est3} can be proved using the Lipschitz property of $\sigma$, the estimate  \eqref{rate} and Lemma  \ref{lema1}. Next, we make the following decomposition of $(\widehat{\Theta}^n_s)^2$:
\[
  (\widehat{\Theta}^n_s)^2= \sum_{j=1}^6  \Psi^{n,j}_s,
 \]
  where
  \[
  \Psi^{n,1}_s =\sigma^2(X_s)  
 \left( \int_0^s \psi_{n,1}(u,s) dW_u \right)^2,
 \]
 \[
  \Psi^{n,2}_s= \left(  \int^s_{\eta_n(s)}\left(s-\eta_n(u)\right)^{\alpha}\sigma(X_{\eta_n(u)})\, dW_u \right)^2,
  \]
  \[
    \Psi^{n,3}_s =\left( \int^{\eta_n(s)}_0 \psi_{n,2}(u,s)\, \sigma(X_{\eta_n(u)})\, dW_u \right)^2,
    \]
  \[
      \Psi^{n,4}_s = 2\sigma(X_s)  
 \left( \int_0^s \psi_{n,1}(u,s) dW_u \right)\left(  \int^s_{\eta_n(s)}\left(s-\eta_n(u)\right)^{\alpha}\sigma(X_{\eta_n(u)})\, dW_u \right),
 \]
   \[
      \Psi^{n,5}_s = 2\sigma(X_s)  
 \left( \int_0^s \psi_{n,1}(u,s) dW_u \right) \left(\int^{\eta_n(s)}_0 \psi_{n,2}(u,s)\, \sigma(X_{\eta_n(u)})\, dW_u \right)
 \]
 and
    \[
      \Psi^{n,6}_s =2\left(  \int^s_{\eta_n(s)}\left(s-\eta_n(u)\right)^{\alpha}\sigma(X_{\eta_n(u)})\, dW_u \right)
      \left(\int^{\eta_n(s)}_0 \psi_{n,2}(u,s)\, \sigma(X_{\eta_n(u)})\, dW_u \right).
      \]
  Then, condition {\bf (C3)} will be a consequence of the following limits in $L^2$,  uniformly in $\tau  \in [0,T]$,
  \begin{align}    \nonumber
  & \lim_{n\rightarrow \infty}
  n^{2\alpha+1}    \int_0^{\tau \wedge t_1 \wedge t_2  } (t_1-s)^{\alpha}  (t_2-s)^{\alpha} (\sigma'(X_s))^2  \Psi^{n,1}_s ds\\
 &\qquad 
  =\kappa_3  \int_0^{\tau \wedge t_1 \wedge t_2  } (t_1-s)^{\alpha}  (t_2-s)^{\alpha} (\sigma'\sigma)^2(X_s)ds,
    \label{limit1}
   \end{align}
     \begin{align}    \nonumber
  & \lim_{n\rightarrow \infty}
  n^{2\alpha+1}    \int_0^{\tau \wedge t_1 \wedge t_2  } (t_1-s)^{\alpha}  (t_2-s)^{\alpha} (\sigma'(X_s))^2  \Psi^{n,2}_s ds\\
 &\qquad  =
\frac 1{ 2\alpha+2} \int_0^{\tau \wedge t_1 \wedge t_2  } (t_1-s)^{\alpha}  (t_2-s)^{\alpha}(\sigma'\sigma)^2(X_s) ds,
    \label{limit2}
   \end{align}
        \begin{align}    \nonumber
  & \lim_{n\rightarrow \infty}
  n^{2\alpha+1}    \int_0^{\tau \wedge t_1 \wedge t_2  } (t_1-s)^{\alpha}  (t_2-s)^{\alpha} (\sigma'(X_s))^2  \Psi^{n,3}_s ds\\
 &\qquad  =
\kappa_4 \int_0^{\tau \wedge t_1 \wedge t_2  } (t_1-s)^{\alpha}  (t_2-s)^{\alpha}(\sigma'\sigma)^2(X_s) ds,
    \label{limit3}
   \end{align}
       \begin{align}    \nonumber
  & \lim_{n\rightarrow \infty}
  n^{2\alpha+1}    \int_0^{\tau \wedge t_1 \wedge t_2  } (t_1-s)^{\alpha}  (t_2-s)^{\alpha} (\sigma'(X_s))^2  \Psi^{n,4}_s ds\\
 &\qquad  =
\frac {\alpha} {2(\alpha+1)^2}\int_0^{\tau \wedge t_1 \wedge t_2  } (t_1-s)^{\alpha}  (t_2-s)^{\alpha}(\sigma'\sigma)^2(X_s) ds,
    \label{limit3a}
   \end{align}
          \begin{align}    \nonumber
  & \lim_{n\rightarrow \infty}
  n^{2\alpha+1}    \int_0^{\tau \wedge t_1 \wedge t_2  } (t_1-s)^{\alpha}  (t_2-s)^{\alpha} (\sigma'(X_s))^2  \Psi^{n,5}_s ds\\
 &\qquad  =
\kappa_5 \int_0^{\tau \wedge t_1 \wedge t_2  } (t_1-s)^{\alpha}  (t_2-s)^{\alpha}(\sigma'\sigma)^2(X_s) ds,
    \label{limit3b}
   \end{align}
   and
         \begin{equation}   
   \lim_{n\rightarrow \infty}
  n^{2\alpha+1}    \int_0^{\tau \wedge t_1 \wedge t_2  } (t_1-s)^{\alpha}  (t_2-s)^{\alpha} (\sigma'(X_s))^2  \Psi^{n,6}_s ds=0,
    \label{limit4}
       \end{equation}
    where $\kappa_3$, $\kappa_4$ and $\kappa_5$  have been introduced in \eqref{kappa3}, \eqref{kappa4} and \eqref{kappa5}, respectively.

  \medskip
  \noindent
  {\it Proof of \eqref{limit1}}:  To simplify the presentation we write
  \begin{equation} \label{gamma}
 \gamma_s:=  \mathbf{1}_{[0,t_1 \wedge t_2]}(s) (t_1-s)^{\alpha}  (t_2-s)^{\alpha}.
 \end{equation}
  Then,
  \begin{align*}
  R^{n,1}_\tau &:=  n^{2\alpha+1}    \int_0^{\tau \wedge t_1 \wedge t_2  } (t_1-s)^{\alpha}  (t_2-s)^{\alpha} (\sigma'(X_s))^2  \Psi^{n,1}_s ds\\
  &= n^{2\alpha+1}    \int_0^\tau  \gamma_s (\sigma'\sigma)^2(X_s)   \left( \int_0^s \psi_{n,1}(u,s) dW_u \right)^2 ds \\
  & =:   R^{n,2}_\tau + R^{n,3}_\tau,
  \end{align*}
 where
 \[
  R^{n,2}_\tau=  n^{2\alpha+1}    \int_0^\tau  \gamma_s (\sigma'\sigma)^2(X_s)  \left[  \left( \int_0^s \psi_{n,1}(u,s) dW_u \right)^2
  -  \int_0^s \psi^2_{n,1}(u,s) du \right] ds,
  \]
  and
  \[
   R^{n,3}_\tau = n^{2\alpha+1}    \int_0^\tau  \gamma_s (\sigma'\sigma)^2(X_s)     \int_0^s \psi^2_{n,1}(u,s) du ds.
   \]
 We claim that
 \begin{equation} \label{71}
 \lim_{n\rightarrow \infty}  \sup_{\tau\in [0,T]}
 E[|  R^{n,2}_\tau|^2]=0.
 \end{equation}
 
 \medskip
 \noindent
 {\it Proof of \eqref{71}}:
 The proof of \eqref{71} is rather involved and will be done in two steps. First we will show that we have convergence to zero in $L^2$ if we remove the random factor
 $ (\sigma'\sigma)^2(X_s)$ and we integrate on any fixed interval $[\tau_1, \tau_2] \subset [0,\tau]$. Secondly,  we will use the small blocks / big blocks argument  (see, for instance, \cite{CNP}) to show that we have convergence to zero when we include the random weight $ (\sigma'\sigma)^2(X_s)$.
 
  \medskip
 \noindent
 {\it Step (i)}: We claim that for any interval  $[\tau_1, \tau_2] \subset [0,\tau]$
 \begin{equation}  \label{L2}
R^{n,4}:= n^{2\alpha+1}\int_{\tau_1}^{\tau_2}\gamma_s \left[\left( \int^{s}_0 \psi_{n,1}(u,s) dW_u\right)^2-  \int^{s}_0\psi_{n,1}^2(u,s)du \,\right] ds \rightarrow 0,
\end{equation}
where the convergence holds in $L^2$, uniformly in $\tau_1, \tau_2$ and $\tau$. Notice that
\[
\left( \int^{s}_0 \psi_{n,1}(u,s) dW_u\right)^2-  \int^{s}_0\psi_{n,1}^2(u,s)du = I_2\left( \psi_{n,1}^{\otimes 2}(\cdot, s) \mathbf{1}_{[0,s]^2}  \right),
\]
where $I_2$ denotes the double stochastic integral with respect to the Brownian motion $W$.
 Therefore,
 \begin{align*}
 E[|R^{n,4}|^2] &=2n^{4\alpha+2} \int_{\tau_1}^{\tau_2}\int_{\tau_1}^{\tau_2} \gamma_{s_1} \gamma_{s_2}  
 \langle \psi^{\otimes 2}_{n,1}(\cdot, s_1) \mathbf{1}^{\otimes 2}_{[0,s_1]}, \psi^{\otimes 2}_{n,1}(\cdot, s_2) \mathbf{1}^{\otimes 2}_{[0,s_2]}  \rangle_{L^2(\mathbb{R}^2)}  ds_1 ds_2\\
 &=4n^{4\alpha+2} \int_{\tau_1}^{\tau_2} \int_{\tau_1}^{s_2}   \gamma_{s_1} \gamma_{s_2} 
\left( \int_0^{s_1} \psi_{n,1}(u, s_1)  \psi_{n,1}(u, s_2) du   \right)^2ds_1 ds_2.
 \end{align*}
 Fix $\delta>0$ such that $\delta> \frac 1n$. On the set $\{ (s_1,s_2): s_2-s_1 >\delta\}$ we have, by the Mean Value Theorem,
 \[
 | \psi_{n,1}(u, s_2)|= | (s_2- \eta_n(u))^{\alpha} - (s_2-u)^\alpha|  \le  \frac {\alpha}n  \delta^{\alpha-1}.
 \]
 As a consequence, using the inequality \eqref{ineq1}, we obtain
 \begin{align*}
 E[|R^{n,4}|^2]  & \le 
 C n^{2\alpha-1} \delta^{2\alpha-2} \int_{\tau_1}^{\tau_2} \int_{\tau_1}^{s_2}   \gamma_{s_1} \gamma_{s_2}  \mathbf{1}_{\{s_2-s_1 >\delta\}}
 ds_1 ds_2\\
 & \qquad +C \int_{\tau_1}^{\tau_2} \int_{\tau_1}^{s_2}   \gamma_{s_1} \gamma_{s_2}  \mathbf{1}_{\{s_2-s_1 \le \delta\}} ds_1 ds_2 \\
 & \le  C n^{2\alpha-1} \delta^{2\alpha-2} \left(  \int_0^T \gamma_{s}ds \right)^2 +C \int_{0}^{T} \int_{0}^{s_2}   \gamma_{s_1} \gamma_{s_2}  \mathbf{1}_{\{s_2-s_1 \le \delta\}} ds_1 ds_2 .
 \end{align*} 
 This implies
 \[
 \limsup_{n\rightarrow \infty}  \sup_{\tau_1, \tau_2 \le  \tau \in [0,T]}  E[|R^{n,4}|^2] \le C \int_{0}^{T} \int_{0}^{s_2}   \gamma_{s_1} \gamma_{s_2}  \mathbf{1}_{\{s_2-s_1 \le \delta\}} ds_1 ds_2.
 \]
 Taking into account that $\delta>0$ is arbitrary, this completes the proof of \eqref{L2}.

   \medskip
 \noindent
 {\it Step (ii)}:  Fix an integer $M\ge 1$ and consider a partition $0=\tau_0 <\tau_1 < \cdots < \tau_M =\tau$, where
$ \tau_i= \frac iM \tau$ for $i=0,1, \dots, M$. With the notation
\[
\Xi ^{n,1}_s=
\left[\left( \int^{s}_0 \psi_{n,1}(u,s) dW_u\right)^2-  \int^{s}_0\psi_{n,1}^2(u,s)du \,\right] ,
\]
we can write
\begin{align*}
 R^{n,3} _\tau &=
n^{2\alpha+1}\int^{\tau}_0\gamma_s (\sigma'\sigma)^2(X_s) \Xi ^{n,1}_s ds \\
&=n^{2\alpha+1} \sum_{i=0} ^{M-1}  \int_{\tau_i} ^{\tau_{i+1}} \gamma_s (\sigma'\sigma)^2(X_s) \Xi^{n,1}_s ds \\
&=:   R^{n,M,1} _\tau  + R^{n,M,2} _\tau,
\end{align*}
  where
  \[
  R^{n,M,1} _\tau  =
n^{2\alpha+1} \sum_{i=0} ^{M-1}  (\sigma'\sigma)^2(X_{\tau_i})  \int_{\tau_i} ^{\tau_{i+1}} \gamma_s  \Xi^{n,1}_s ds 
\]
and
 \[
  R^{n,M,2} _\tau  =
n^{2\alpha+1} \sum_{i=0} ^{M-1}    \int_{\tau_i} ^{\tau_{i+1}} \gamma_s    
\left[   (\sigma'\sigma)^2(X_s) - (\sigma'\sigma)^2(X_{\tau_i} )  \right] \Xi^{n,1}_s ds 
\]
From Step 1, we know that for any fixed $M$
\begin{equation}  \label{L3a}
\lim_{n\rightarrow \infty} \sup_{\tau\in [0,T]} E[|  R^{n,M,1} _\tau|^2]=0.
\end{equation}
Indeed, for each $0\le i \le M-1$, the term  $n^{2\alpha+1} \int_{\tau_i} ^{\tau_{i+1}} \gamma_s  \Xi^{n,1}_s ds $ converges to zero in $L^2$ as $n\to \infty$, uniformly in $i=0, \dots, M-1$ and $\tau \in [0,T]$, and this convergence also holds in $L^p$ for any $p\ge 2$, because $ \Xi^{n,1}_s $ belongs to the second Wiener chaos.

Assuming that $ n\ge M$,  the term    $R^{n,M,2} _\tau $ can be estimated as follows
\[
 \| R^{n,M,2} _\tau  \|_2 \le C \sup_{u,v \in [0,1], |u-v| \le \frac \tau M} \|  (\sigma'\sigma)^2(X_u) - (\sigma'\sigma)^2(X_v)  \|_4
n^{2\alpha+1}  \int_0^\tau  \gamma_s    \| \Xi^{n,1}_s \|_4 ds.
\]
From Lemma  \ref{lema2} we get
\[
\sup_{n\ge 1} n^{2\alpha+1}  \int_0^T  \gamma_s    \| \Xi^{n,1}_s \|_4 ds <\infty.
\]
Therefore,
\begin{equation} \label{L4a}
\lim_{M\rightarrow \infty} \sup_{n\ge M}   \sup_{\tau\in [0,T]} \| R^{n,M,2} _\tau  \|_2=0.
\end{equation}
In conclusion, \eqref{L3a} and \eqref{L4a} imply  \eqref{71}.

\medskip
It remains to compute the limit as $n$ tends to infinity of the term  $R^{n,3}_\tau $.
 First notice that the term     $R^{n,3}_\tau $ can be expressed as follows
 \begin{align*}
 R^{n,3}_\tau &=n^{2\alpha+1}    \int_0^\tau  \gamma_s (\sigma'\sigma)^2(X_s)     \int_0^s   [ (s- \eta_n(u))^\alpha - (s-u)^\alpha]^2 du ds \\
 &=n^{2\alpha+1}   \sum_{j=0}^{ \lfloor n\tau \rfloor} \int_{\frac jn} ^{\frac {j+1}n \wedge \tau}
 \gamma_s(\sigma'\sigma)^2(X_s)  \sum_{i=0}^{ \lfloor ns \rfloor} \int_{\frac in} ^{\frac {i+1}n \wedge s} [ (s-\frac in)^\alpha - (s-u)^\alpha]^2 duds.
 \end{align*}
 The change of variables  $nu=v$ and $ns=x$ yields
\[
 R^{n,3}_\tau = \frac 1n   \sum_{j=0}^{ \lfloor n\tau \rfloor} \int_{j} ^{(j+1) \wedge n\tau}
 \gamma_{\frac xn}(\sigma'\sigma)^2(X_{\frac xn})  \sum_{i=0}^{ \lfloor x \rfloor} \int_{i} ^{(i+1) \wedge x} [ (x-i)^\alpha - (x-v)^\alpha]^2 dvdx.
\]
 With the additional change of variables $x-j=y$ and $v-i=z$, we can write
  \begin{align*}
 R^{n,3}_\tau &= \frac 1n   \sum_{j=0}^{ \lfloor n\tau \rfloor} \int_{0} ^{1 \wedge (n\tau-j)}
 \gamma_{\frac {y+j}n}(\sigma'\sigma)^2(X_{\frac {y+j}n}) \\
 &\qquad \times  \sum_{i=0}^{ \lfloor y+j \rfloor} \int_{0} ^{1 \wedge (y+j-i)} [ (y+j-i)^\alpha - (y+j-i-z)^\alpha]^2 dzdy\\
 &=\frac 1n   \sum_{j=0}^{ \lfloor n\tau \rfloor} \int_{0} ^{1 \wedge (n\tau-j)}
 \gamma_{\frac {y+j}n}(\sigma'\sigma)^2(X_{\frac {y+j}n})  \sum_{k= j- \lfloor y+j \rfloor}^{j} \int_{0} ^{1 \wedge (y+k)} [ (y+k)^\alpha - (y+k-z)^\alpha]^2 dzdy.
 \end{align*}
 Because $y \le 1$,  we have that $\lfloor y+j \rfloor = j$. Therefore, we obtain
 \begin{align*}
 \lim_{n\rightarrow \infty}
  R^{n,3}_\tau &=
   \lim_{n\rightarrow \infty}
   \frac 1n   \sum_{j=0}^{ \lfloor n\tau \rfloor}  
 \gamma_{\frac {j}n}(\sigma'\sigma)^2(X_{\frac {j}n})  \int_0^1 \sum_{k= 0}^{j} \int_{0} ^{1 \wedge (y+k)} [ (y+k)^\alpha - (y+k-z)^\alpha]^2 dzdy \\
 &= \left(\sum_{k= 0}^{\infty}\int_0^1   \int_{0} ^{1 \wedge (y+k)} [ (y+k)^\alpha - (y+k-z)^\alpha]^2 dzdy \right)\\
 & \qquad \times
  \int_0^{\tau \wedge t_1 \wedge t_2  } (t_1-s)^{\alpha}  (t_2-s)^{\alpha}  (\sigma'\sigma)^2(X_s) ds,
   \end{align*}
   where the convergence is in $L^2$, uniformly in $\tau \in [0,T]$.
   This completes the proof of \eqref{limit1}.
   
   \medskip
  \noindent
  {\it Proof of \eqref{limit2}}: 
  With the notation \eqref{gamma}, we have
  \begin{align*}
 Q^n_\tau& :=   n^{2\alpha+1}    \int_0^{\tau \wedge t_1 \wedge t_2  } (t_1-s)^{\alpha}  (t_2-s)^{\alpha} (\sigma'(X_s))^2  \Psi^{n,2}_s ds \\
 &=  n^{2\alpha+1}    \int_0^{\tau} \gamma_s (\sigma'(X_s))^2  \left(  \int^s_{\eta_n(s)}\left(s-\eta_n(u)\right)^{\alpha}\sigma(X_{\eta_n(u)})\, dW_u \right)^2 ds \\
&= n^{2\alpha+1}\int^{\tau}_0\gamma_s (s-\eta_n(s))^{2\alpha}\left( \sigma'(X_s)\right)^2\sigma^2(X_{\eta_n(s)}) \, \left(W_s-W_{\eta_n(s)}\right)^2\, ds\\
&=n^{2\alpha+1}\int^{\tau}_0\gamma_s(s-\eta_n(s))^{2\alpha}\left( \sigma'(X_s)\right)^2\sigma^2(X_{\eta_n(s)}) \, \left[\left(W_s-W_{\eta_n(s)}\right)^2 -(s-\eta_n(s))\right]\,ds\\
& \qquad + n^{2\alpha+1}\int^{\tau}_0 \gamma_s (s-\eta_n(s))^{2\alpha+1}\left(\sigma'(X_s)\right)^2\,\sigma^2(X_{\eta_n(s)}) ds\\
&= :Q^{n,1}_\tau+ Q^{n,2}_\tau.
 \end{align*}
We claim that
\begin{equation} \label{lim6}
\lim_{n\rightarrow \infty}  \sup_{\tau\in [0,T]} E\left[ |Q^{n,1} _\tau|^2 \right]=0.
\end{equation}
In order to show  the convergence \eqref{lim6}, we first replace $\sigma'(X_s)$ by $\sigma'(X_{\eta_n(s)})$ in $Q^{n,1}_\tau$ and then use
the fact that we are dealing with a discrete martingale sequence.
That is, we make the decomposition
\[
Q^{n,1} _\tau =: Q^{n,3} _\tau+ Q^{n,4} _\tau,
\]
where
\begin{align*}
Q^{n,3} _\tau
&= n^{2\alpha+1}\int^{\tau}_0\gamma_s(s-\eta_n(s))^{2\alpha}\left(  (\sigma'(X_s))^2- ( \sigma'(X_{\eta_n(s)}))^2 \right)\sigma^2(X_{\eta_n(s)}) \, \\
& \qquad \times \left[\left(W_s-W_{\eta_n(s)}\right)^2 -(s-\eta_n(s))\right]\,ds
\end{align*}
and
\[
Q^{n.4}_\tau=n^{2\alpha+1}\int^{\tau}_0 \gamma_s(s-\eta_n(s))^{2\alpha}  (\sigma' \sigma)^2(X_{\eta_n(s)}) \, \left[\left(W_s-W_{\eta_n(s)}\right)^2 -(s-\eta_n(s))\right]\,ds.
\]
Then, using Minkowski's inequality, we obtain
\begin{align*}
\| Q^{n,3} _\tau \|_2  &\le n^{2\alpha+1}  \int^{\tau}_0 \gamma_s(s-\eta_n(s))^{2\alpha} \| \left(  (\sigma'(X_s))^2- ( \sigma'(X_{\eta_n(s)}))^2 \right)\sigma^2(X_{\eta_n(s)}) \|_4   \\
& \qquad \times  \|[(W_s-W_{\eta_n(s)})^2 -(s-\eta_n(s))]\|_4 ds.
\end{align*}
Because $\sigma'$ is $\beta$-H\"older continuous and $\sigma$ has linear growth, we can write using the estimates \eqref{sup} and \eqref{rgla}
\[
\| Q^{n,3} _\tau \|_2   \le  Cn^{2\alpha+1-\beta(\alpha+\frac 12)}  \int^{\tau}_0 \gamma_s (s-\eta_n(s))^{2\alpha+1}    ds \le C n^{-\beta(\alpha+\frac 12)},
\]
which shows that
\begin{equation} \label{h13}
\lim_{n\rightarrow \infty}  \sup_{\tau\in [0,T]} E\left[ |Q^{n,3} _\tau|^2 \right]=0.
\end{equation}
 To handle the term $Q^{n,4}_\tau$ we write
\[
 Q^{n,4} _\tau  = n^{2\alpha+1} \sum_{j=0} ^{\lfloor n\tau \rfloor} \xi_{j,n},
 \]
 where
\[
\xi_{j,n} = \int_{\frac jn} ^{\frac {j+1}n \wedge \tau}
 \gamma_s (s-\frac jn)^{2\alpha}  (\sigma'\sigma)^2(X_{\frac jn}) \left[(W_s- W_{\frac jn})^2- (s-\frac jn) \right] ds.
 \]
 The random variables $\xi_{j,n}$ for $j=0, \dots, \lfloor n\tau \rfloor$,  are $\mathcal{F}_{\frac {j+1} n }$-measurable and satisfy
 $E[ \xi_{j,n} | \mathcal{F}_{\frac jn}]=0$. That means, they form a discrete martingale sequence. As a consequence, for $j\not = k$ we have
 $E[\xi_{j,n} \xi_{k,n} ]=0$ and this implies
 \begin{equation} \label{h4}
 E[| Q^{n,4} _\tau|^2]= n^{4\alpha+2} \sum_{j=0} ^{\lfloor n\tau \rfloor}  E[\xi_{j,n}^2].
 \end{equation}
 By Minkowski's inequality
 \begin{equation}  \label{h3}
 \| \xi_{j,n} \|_2  \le  C \int_{\frac jn} ^{\frac {j+1}n \wedge \tau}
 \gamma_s (s-\frac jn)^{2\alpha+1}   ds \le  Cn^{ -2\alpha-1} \int_{\frac jn} ^{\frac {j+1}n \wedge \tau}
 \gamma_s ds.
\end{equation}
Substituting \eqref{h3} into  \eqref{h4}, we obtain
\[
  E[| Q^{n,4} _\tau|^2]  \le C \sum_{j=0} ^{\lfloor n\tau \rfloor}   \left(\int_{\frac jn} ^{\frac {j+1}n \wedge \tau}
\gamma_s ds  \right)^2 \le C \sup_{0\le j \le \lfloor n\tau \rfloor} \left(\int_{\frac jn} ^{\frac {j+1}n \wedge \tau}
\gamma_s ds  \right) \int_0^{\tau}  \gamma_s ds,
\]
 which implies
 \begin{equation} \label{h5}
\lim_{n\rightarrow \infty} \sup_{\tau\in [0,T]}  E\left[ |Q^{n,4} _\tau|^2 \right]=0.
\end{equation}
 Therefore, \eqref{h13} and \eqref{h5} imply \eqref{lim6}.
 
 In order to find the limit of the term $Q^{n,2} _\tau$, we first make the following decomposition
 \[
 Q^{n,2} _\tau =Q^{n,5} _\tau +Q^{n,6} _\tau,
 \]
 where
 \[
 Q^{n,5} _\tau = n^{2\alpha+1}\int^{\tau}_0 \gamma_s (s-\eta_n(s))^{2\alpha+1}[(\sigma'(X_s))^2- (\sigma'(X_{\eta_n(s)}))^2]\,\sigma^2(X_{\eta_n(s)}) ds,
 \]
 and
  \[
 Q^{n,6} _\tau = n^{2\alpha+1}\int^{\tau}_0 \gamma_s (s-\eta_n(s))^{2\alpha+1}
 (\sigma' \sigma)^2 (X_{\eta_n(s)})ds.
 \]
The $\beta$-H\"older continuity of $\sigma'$  together with the estimates \eqref{sup} and \eqref{rgla} imply
  \begin{equation} \label{h6}
\lim_{n\rightarrow \infty}  \sup_{\tau\in [0,T]} E\left[ |Q^{n,5} _\tau|^2 \right]=0.
\end{equation}
 Then, we make a further decomposition of the term $Q^{6,n}_\tau$ as follows
 \[
 Q^{n,6} _\tau =Q^{n,7} _\tau +Q^{n,8} _\tau,
 \]
 where
 \[
 Q^{n,7} _\tau = n^{2\alpha+1}\int^{\tau}_0[ \gamma_s- \gamma_{\eta_n(s)}](s-\eta_n(s))^{2\alpha+1}
 (\sigma' \sigma)^2 (X_{\eta_n(s)})ds,
 \]
 and
  \[
 Q^{n,8} _\tau = n^{2\alpha+1}\int^{\tau}_0 \gamma_{\eta_n(s)}(s-\eta_n(s))^{2\alpha+1}
 (\sigma' \sigma)^2 (X_{\eta_n(s)})ds.
 \]
 We have, using \eqref{ineq1}
\[
 \|  Q^{n,7} _\tau \|_2   \le C  \int^{t_1\wedge t_2}_0|(t_1-s)^{\alpha}(t_2-s)^\alpha- (t_1-\eta_n(s))^{\alpha} (t_2-\eta_n(s))^{\alpha}  |ds,    
 \]
 which implies
 \begin{equation}
 \label{h7}
 \lim_{n\rightarrow \infty}  \sup_{\tau\in [0,T]} \|  Q^{n,7} _\tau \|_2=0.
\end{equation}
 Moreover, the term $Q^{n,8} _\tau$ can be expressed as follows
 \begin{align*}
 Q^{n,8} _\tau &=n^{2\alpha+1} \sum_{j=0} ^{\lfloor n\tau \rfloor}
 \gamma(\frac jn) (\sigma' \sigma)^2(X_{\frac jn})
  \int_{\frac jn} ^{\frac {j+1} n \wedge \tau} (s-\frac jn)^{2\alpha+1}ds\\
  &=\frac { n^{2\alpha+1}}{2\alpha+2}  \sum_{j=0} ^{\lfloor n\tau \rfloor}
 \gamma(\frac jn) (\sigma' \sigma)^2(X_{\frac jn}) \left( \frac {j+1}n \wedge \tau -\frac jn\right)^{2\alpha+2}\\
&= \frac 1{(2\alpha+2)n} \sum_{j=0} ^{\lfloor n\tau \rfloor-1} \gamma(\frac jn) (\sigma' \sigma)^2(X_{\frac jn})\\
&\qquad 
 +\frac { n^{2\alpha+1}}{2\alpha+2} 
 (\sigma'\sigma)^2 (X_{ \frac { \lfloor n\tau \rfloor}n} ) \gamma( \frac { \lfloor n\tau \rfloor}n)   \left(\tau-  \frac { \lfloor n\tau \rfloor}n  \right)^{2\alpha+2}.
 \end{align*}
 Notice that
\[
\gamma( \frac { \lfloor n\tau \rfloor}n) \left(\tau-  \frac { \lfloor n\tau \rfloor}n  \right)^{2\alpha+2} =  
\mathbf{1} _{\{ \frac {\lfloor n\tau \rfloor}n  \le t_1\wedge t_2 \}} \left( t_1- \frac { \lfloor n\tau \rfloor}n \right)^{\alpha}\left( t_2- \frac { \lfloor n\tau \rfloor}n \right)^{\alpha} \left(\tau-  \frac { \lfloor n\tau \rfloor}n  \right)^{2\alpha+2}.
\]
If $n$ is large enough, this quantity vanishes   if $\tau > t_1 \wedge t_2$,  is bounded by $ Cn^{-2\alpha-2}$ if   $\tau < t_1 \wedge t_2$,
is bounded by  $n^{-\alpha -2} $ if $\tau=t_1 <t_2$ and, finally it is bounded by  $n^{-2}$ if $\tau=t_1=t_2$.
 As a consequence,
 \begin{align} \nonumber
 \lim_{n\rightarrow \infty}  Q^{n,8} _\tau& = \frac 1{2\alpha+2}  \lim_{n\rightarrow \infty}  
 \frac 1n \sum_{j=0} ^{\lfloor n\tau \rfloor-1} \gamma(\frac jn) (\sigma' \sigma)^2(X_{\frac jn})\\ \nonumber
 &= \frac 1{2\alpha+2}  \lim_{n\rightarrow \infty}   \int_0^{ \frac {\lfloor n\tau \rfloor}n} \gamma(\eta_n(s)) (\sigma' \sigma)^2 ( X_{\eta_n(s)}) ds\\
 &=\frac 1{2\alpha+2}    \int_0^{  \tau\wedge t_1 \wedge t_2} (t_1- s)^{\alpha} (t_2-s)^\alpha (\sigma' \sigma)^2 ( X_s) ds,  \label{h8}
 \end{align}
 where the convergence is in $L^2$, uniformly in $\tau \in [0,T]$. The limit in the last equality of the above equation can be proved by arguments similar to those used before based on the estimate \eqref{ineq1} and  the H\"older continuity of $\sigma'$.  From \eqref{h6}, \eqref{h7} and \eqref{h8} we deduce that
 \[
  \lim_{n\rightarrow \infty}  Q^{n,2} _\tau= \frac 1{2\alpha+2}    \int_0^{  \tau\wedge t_1 \wedge t_2} (t_1- s)^{\alpha} (t_2-s)^\alpha (\sigma' \sigma)^2 ( X_s) ds, 
  \]
   where the convergence is in $L^2$, uniformly in $\tau \in [0,T]$.  Together with \eqref{lim6}, this completes the proof of  \eqref{limit2}.

\medskip \noindent
{\it Proof of  \eqref{limit3}:}
We have
\[
 N^n_\tau: =n^{2\alpha+1}\int^{\tau}_0 \gamma_s \left(\sigma'(X_s)\right)^2  \left( \int^{\eta_n(s)}_0\psi_{n,2}(u,s) \sigma(X_{\eta_n(u)})\, dW_u\right)^2\, ds.
\]
First we are going to show that 
\[
N^n_\tau= N^{n,1} _\tau + N^{n,2} _\tau,
\]
where
\[
N^{n,1} _\tau =n^{2\alpha+1}\int^{\tau}_0\gamma_s (\sigma'\sigma)^2(X_{\eta_n(s)})  \left( \int^{\eta_n(s)}_0\psi_{n,2}(u,s)  dW_u\right)^2\, ds
\]
and 
\begin{equation}
\label{k7}
\lim_{n\rightarrow \infty}  \sup_{\tau\in [0,T} E[| N^{n,2} _\tau|^2]=0.
\end{equation}

 Because 
 $\sigma( X_{\eta_n(u)})$ lies inside the stochastic integral with respect to $dW_u$ and 
  $\sigma( X_{\eta_n(s)})$ is not adapted to $\mathcal{F}_u$, to replace the  term   $\sigma( X_{\eta_n(u)})$ by    $\sigma( X_{\eta_n(s)})$  requires a delicate argument. The idea is to show that if  we consider the stochastic integral in an interval of the form $[0, \eta_n(s)- \delta]$, where $\delta>0$, then the corresponding term tends to zero in $L^1$ and the integral over $[\eta_n(s)- \delta, \eta_n(s)]$ can be made arbitrarily small in $L^2$-norm if $\delta $ is small.
 
 Fix $\delta >0$. By Lemma \ref{lema1} and Burkholder and Minkowski's inequalities, we obtain
 \begin{align} \nonumber
 &n^{2\alpha+1}  \left \| \int_{0}^{(\delta+\frac 1n) \wedge \tau}\gamma _s\left(\sigma'(X_s)\right)^2  \left( \int^{\eta_n(s)}_0\psi_{n,2}(u,s) \sigma (X_{\eta_n(u)})\, dW_u\right)^2\, ds \right \|_2 \\ \label{k1}
 & \le C n^{2\alpha+1}  \int_0^{(\delta+\frac 1n) \wedge \tau }\gamma_s  \int^{\eta_n(s)}_0\psi^2_{n,2}(u,s) du ds \le C (\delta + \frac 1n) ^{2\alpha+1}.
 \end{align}
 Define
 \begin{align*}
 K^{n,1}_\tau  &=n^{2\alpha+1}  \int^{\tau}_{(\delta+\frac 1n) \wedge \tau } \gamma_s \left(\sigma'(X_s)\right)^2 \\
 &  \times   \left[ \left( \int_{0}^{\eta_n(s)}\psi_{n,2}(u,s) \sigma(X_{\eta_n(u)})\, dW_u\right)^2
 - \left( \int_{\eta_n(s)-\delta}^{\eta_n(s)}\psi_{n,2}(u,s) \sigma(X_{\eta_n(u)})\, dW_u\right)^2 \right]
 \, ds.
 \end{align*}
 Then,  using Burkholder's inequality we can estimate the $L^2$-norm of $K^{n,1}_\tau$ as follows
 \begin{align*}
 \| K^{n,1} _\tau \|_2 & \le  Cn^{2\alpha+1}\int^{\tau}_{(\delta+\frac 1n) \wedge \tau } \gamma_s  \left \| \int_0^{\eta_n(s)-\delta}\psi_{n,2}(u,s) \sigma(X_{\eta_n(u)})\, dW_u \right\|_4 \\
 & \qquad  \times \left \| \int_0^{\eta_n(s)}\psi_{n,2}(u,s)\sigma(X_{\eta_n(u)}) dW_u 
 + \int_{\eta_n(s)-\delta}^{\eta_n(s)}\psi_{n,2}(u,s)\sigma(X_{\eta_n(u)}) dW_u   \right\|_4 ds \\
 & \le C n^{2\alpha+1} \int^{\tau}_{(\delta+\frac 1n) \wedge \tau } \gamma_s  \left( \int_0^{\eta_n(s)-\delta} \psi_{n,2}^2(u,s) du \right)^{\frac 12}
 \left( \int_0^{\eta_n(s)} \psi_{n,2}^2(u,s) du \right)^{\frac 12}ds \\
 &  \le C  n^{\alpha+\frac 12} \int^{\tau}_{(\delta+\frac 1n) \wedge \tau } \gamma_s \left( \int_0^{\eta_n(s)-\delta} \psi_{n,2}^2(u,s) du \right)^{\frac 12}
ds,
 \end{align*}
 where in the last inequality we have used Lemma \ref{lema1}. Finally, applying  the Mean Value Theorem, we obtain
 \begin{align} \nonumber
  \| K^{n,1} _\tau \|_2  & \le  Cn^{\alpha+ \frac 12} \int^{\tau}_{(\delta+\frac 1n) \wedge \tau }\gamma_s  \left( \int^{\eta_n(s)-\delta}_0 (s-\eta_n(s))^2 ((\eta_n(s)- \eta_n(u))^{2\alpha-2} du \right)^{\frac 12} ds\\ \label{k2}
 & \le C \delta^{\alpha-1} n^{\alpha-\frac 12}.
 \end{align}
 Consider now the term
 \begin{align*}
 K^{n,2}_\tau  &=n^{2\alpha+1}  \int^{\tau}_{(\delta+\frac 1n) \wedge \tau } \gamma_s \left(\sigma'(X_s)\right)^2 \\
 &  \times   \left[ \left( \int_{\eta_n(s)-\delta}^{\eta_n(s)}\psi_{n,2}(u,s) \sigma(X_{\eta_n(u)})\, dW_u\right)^2
 - \left( \int_{\eta_n(s)-\delta}^{\eta_n(s)}\psi_{n,2}(u,s) \sigma(X_{\eta_n(s)-\delta})\, dW_u\right)^2 \right]
 \, ds.
 \end{align*}
 We can estimate the $L^2$-norm of $K^{n,2}_\tau$ as follows
 \begin{align}  \nonumber
 \| K^{n,2} _\tau \|_2 & \le n^{2\alpha+1}\int^{\tau}_{(\delta+\frac 1n) \wedge \tau }\gamma_s   \left \| \int_{\eta_n(s)-\delta}^{\eta_n(s)}\psi_{n,2}(u,s) [\sigma(X_{\eta_n(u)}) - \sigma (X_{\eta_n(s)-\delta})]\, dW_u \right\|_4 \\ \nonumber
 & \times \left \| \int_{\eta_n(s)-\delta}^{\eta_n(s)}\psi_{n,2}(u,s) [\sigma(X_{\eta_n(u)}) + \sigma (X_{\eta_n(s)-\delta})]\, dW_u \right\|_4 ds \\ \label{k3}
 & \le C n^{2\alpha+1} \delta^{\alpha+\frac 12}  \int^{T}_{0}\gamma_s \int_0^{\eta_n(s)} \psi_{n,2}^2(u,s) duds \le C \delta^{\alpha+\frac 12},
 \end{align}
 where in the first inequality we have used  the Lipschitz property of $\sigma$, inequality \eqref{rgla}, together with Burkholder's inequality for the stochastic integral and in the second inequality we have used Lemma \ref{lema1}.
 Define now
  \begin{align*}
 K^{n,3}_\tau  &=n^{2\alpha+1}  \int^{\tau}_{(\delta+\frac 1n) \wedge \tau }\gamma_s [ (\sigma'(X_s) \sigma( X_{\eta_n(s) -\delta}))^2
 - ( \sigma'\sigma)^2(X_{\eta_n(s)})]  \left( \int_{\eta_n(s)-\delta}^{\eta_n(s)}\psi_{n,2}(u,s) dW_u \right)^2 ds
 \end{align*}
 By the same arguments as before, we can write
 \begin{equation} \label{k4}
 \|  K^{n,3}_\tau \|_2 \le C \left( n^{ -\beta(\alpha+\frac 12)} + \delta^{\alpha+\frac 12} \right).
 \end{equation}
 The term
 \begin{align*}
 K^{n,4} _\tau &:=n^{2\alpha+1}  \int^{\tau}_{(\delta+\frac 1n) \wedge \tau }\gamma_s
 ( \sigma'\sigma)^2(X_{\eta_n(s)}) \\
 & \qquad \times  \left[  \left( \int_{\eta_n(s)-\delta}^{\eta_n(s)}\psi_{n,2}(u,s) dW_u \right)^2 
 -\left( \int_0^{\eta_n(s)}\psi_{n,2}(u,s) dW_u \right)^2 \right]ds
 \end{align*}
 can be handled as the term  $K^{n,1} _\tau$ and we get
 \begin{equation} \label{k5}
 \|  K^{n,4} _\tau \|_2 \le C \delta^{\alpha-1} n^{\alpha-\frac 12}.
 \end{equation}
 Finally, as in the proof of \eqref{k1}, we can write
 \begin{equation} \label{k6}
  n^{2\alpha+1} \left \|   \int_0^ {(\delta+\frac 1n)\wedge \tau} \gamma_s
 ( \sigma'\sigma)^2(X_{\eta_n(s)})  
 \left( \int_0^{\eta_n(s)}\psi_{n,2}(u,s) dW_u \right)^2 ds  \right\|_2 \le C (\delta +\frac 1n) ^{2\alpha+1}.
 \end{equation}
 In conclusion, form estimates \eqref{k1}, \eqref{k2}, \eqref{k3}, \eqref{k4}, \eqref{k5} and  \eqref{k6}, by taking first the limit as $n$ tends to infinity and later taking $\delta \downarrow 0$, we obtain \eqref{k7}.
 
 Next we will make a decomposition of the term $N^{n,1} _\tau$ into two parts:
 \[
 N^{n,1}_\tau= N^{n,3} _\tau + N^{n,4} _\tau,
 \]
 where
\[ 
N^{n,3} _\tau=
n^{2\alpha+1}\int^{\tau}_0 \gamma_s (\sigma'\sigma)^2(X_{\eta_n(s)}))\left[\left( \int^{\eta_n(s)}_0 \psi_{n,2}(u,s) dW_u\right)^2-  \int^{\eta_n(s)}_0\psi_{n,2}^2(u,s)du \,\right] ds
\]
 and 
 \[
 N^{n,4} _\tau =n^{2\alpha+1}\int^{\tau}_0\gamma_s (\sigma'\sigma)^2(X_{\eta_n(s)})\int^{\eta_n(s)}_0 \psi_{n,2}^2(u,s)du \,  ds.
 \]
Using the same arguments as in the proof of \eqref{71} we can show that
 \begin{equation} \label{L1}
 \lim_{n\rightarrow \infty}  \sup_{\tau\in [0,T]}E[| N^{n,3} _\tau|^2]=0.
 \end{equation}

\medskip
It  only remains to show that $N^{n,4} _\tau$ converges in $L^2$, uniformly in $\tau \in [0,T]$,  as $n\to \infty$ to
  \[
\kappa_3 \int_0^{\tau \wedge t_1 \wedge t_2  } (t_1-s)^{\alpha}  (t_2-s)^{\alpha}(\sigma'\sigma)^2(X_s) ds,
  \]
  where
  \[
  \kappa_3 = \sum_{k=1} ^\infty  \int_0^1 [(x+k) ^\alpha -k^\alpha]^2 dx.
  \]
We can write
  \begin{align*}
N^{n,4} _\tau &= n^{2\alpha+1}  \sum_{j=0} ^{\lfloor n\tau \rfloor}  \int_{\frac j n } ^{ \frac {j+1} n \wedge  \tau}  \gamma_s (\sigma'\sigma)^2(X_{\frac j n})  \int_0^{\frac jn }
  [ (s-\eta_n(u))^\alpha - ( \frac jn -\eta_n(u) )^{\alpha} ] ^2 du ds \\
  & = n^{2\alpha}  \sum_{j=0} ^{\lfloor n\tau \rfloor}  (\sigma'\sigma)^2(X_{\frac j n})   \int_{\frac j n } ^{ \frac {j+1} n \wedge  \tau}  \gamma_s  \sum_{i=0} ^{j-1}   
  [ (s- \frac in )^\alpha - ( \frac {j-i}n )^{\alpha} ] ^2 ds.
  \end{align*} 
  Making the  change of indices $j-i =k$ and the change of variables $s=\frac y n$ and $y-j=x$, we obtain
  \begin{align*}
     N^{n,4} _\tau &=  n^{2\alpha}  \sum_{j=0} ^{\lfloor n\tau \rfloor}  (\sigma'\sigma)^2(X_{\frac j n})   \int_{\frac j n } ^{ \frac {j+1} n \wedge  \tau}  \gamma_s  \sum_{k=1} ^{j}   
  [ (s- \frac jn  + \frac kn )^\alpha - ( \frac {k}n )^{\alpha} ] ^2 ds \\
  &= n^{-1}  \sum_{j=0} ^{\lfloor n\tau \rfloor}  (\sigma'\sigma)^2(X_{\frac j n})   \int_{j   } ^{  (j+1) \wedge  n \tau}   \gamma_{\frac yn}  \sum_{k=1} ^{j}   
  [ (y- j  + k)^\alpha -  k^{\alpha} ] ^2 dy\\
  &= n^{-1}  \sum_{j=0} ^{\lfloor n\tau \rfloor}  (\sigma'\sigma)^2(X_{\frac j n})   \int_{0 } ^{ 1\wedge(n \tau -j )} \gamma_{\frac {x+j}n}  \sum_{k=1} ^{j}   
  [ (x + k)^\alpha -  k^{\alpha} ] ^2 dx.
  \end{align*}
  We have  $ \lim_{n\rightarrow \infty} \sup_{\tau\in [0,T]} E[ | N^{n,5} _\tau|^2]=0$, where
  \[
 N^{n,5} _\tau:= 
  \frac 1n  \sum_{j=0} ^{\lfloor n\tau \rfloor}  (\sigma'\sigma)^2(X_{\frac j n})   \int_{0 } ^{ 1\wedge(n \tau -j )}  \gamma_{\frac {x+j}n}   \sum_{k=j+1} ^{\infty}   
  [ (x + k)^\alpha -  k^{\alpha} ] ^2 dx=0.
  \]
  Indeed, using  the Mean Value Theorem, we can write
  \[
  \|    N^{n,5} _\tau\|_2 \le   \alpha ^2 \sup_{0\le s\le \tau}   \| (\sigma'\sigma)^2(X_s) \|_2
 \frac 1n    \sum_{j=0}^{\lfloor n\tau \rfloor}     \int_{0 } ^{ 1\wedge(n \tau -j )}  \gamma_{\frac {x+j}n}  dx
 \sum_{k=j+1} ^{\infty}      k^{2\alpha -2}.
  \]
  Taking into account that   $ \int_{0 } ^{ 1\wedge(n \tau -j )} \gamma_{\frac {x+j}n}  dx  $ is bounded by $\frac {2} {2\alpha+1} (t_1 \vee t_2)^{2\alpha+1}$, we deduce that
  \[
     \|    N^{n,5} _\tau\|_2\le  C  \frac 1n    \sum_{j=0}^{\lfloor nT \rfloor}    \sum_{k=j+1} ^{\infty}      k^{2\alpha -2} \to 0,
    \]
    which converges to zero a $n\to \infty$.
  As a consequence,
  \begin{align*}
    \lim_{n\rightarrow \infty}    N^{n,4} _\tau&  =  \lim_{n\rightarrow \infty} 
   n^{-1}  \sum_{j=0} ^{\lfloor n\tau \rfloor}  (\sigma'\sigma)^2(X_{\frac j n})   \int_{0 } ^{ 1\wedge(n \tau -j )}  \gamma_{\frac {x+j}n}  \sum_{k=1} ^{\infty}   
  [ (x + k)^\alpha -  k^{\alpha} ] ^2 dx \\
  &= \left( \sum_{k=1} ^\infty  \int_0^1 [(x+k) ^\alpha -k^\alpha]^2 dx  \right)\int_0^{\tau \wedge t_1 \wedge t_2  } (t_1-s)^{\alpha}  (t_2-s)^{\alpha}(\sigma'\sigma)^2(X_s) ds,
  \end{align*}
where the convergence holds in $L^2$, uniformly in $\tau\in [0,T]$. This  implies the  desired result.

    \medskip \noindent
{\it Proof of  \eqref{limit3a}:}
Set
\begin{align*}
 L^n_\tau  &:=n^{2\alpha+ 1}\int^{\tau}_0\gamma_s\left(\sigma'(X_s)\right)^2\sigma(X_s)  
 \left( \int_0^s \psi_{n,1}(u,s) dW_u \right) \\
 & \qquad \times \left(\int^s_{\eta_n(s)}(s-\eta_n(u))^{\alpha}\sigma(X_{\eta_n(u)})\,dW_u\right)  ds \\
 &= n^{2\alpha+ 1}\int^{\tau}_0\gamma_s\left(\sigma'(X_s)\right)^2\sigma(X_s)  \sigma(X_{\eta_n(s)} ) (s-\eta_n(s))^\alpha
  (W_s-W_{\eta_n(s)}) \\
  & \qquad \times  \left( \int_0^s \psi_{n,1}(u,s) dW_u \right)  ds.
   \end{align*}
   Using the Lipschitz property of $\sigma$ and the H\"older continuity of $\sigma'$ and taking into account Lemma \ref{lema2}, we have that
   $ L^n_\tau$ has the same  asymptotic behavior in $L^2$, uniformly in $\tau \in [0,T]$,  as
\[
 L^{n,1}_\tau  
 = n^{2\alpha+ 1}\int^{\tau}_0\gamma_s (\sigma'\sigma)^2(X_{\eta_n(s)})  (s-\eta_n(s))^\alpha
  (W_s-W_{\eta_n(s)})   \left( \int_0^s \psi_{n,1}(u,s) dW_u \right)  ds.
\]
 Consider the decomposition $ L^{n,1}_\tau=:L^{n,2}_\tau + L^{n,3}_\tau+L^{n,4}_\tau $, where
 \[
 L^{n,2}_\tau  
 = n^{2\alpha+ 1}\int^{\tau}_0\gamma_s (\sigma'\sigma)^2(X_{\eta_n(s)})  (s-\eta_n(s))^\alpha
  (W_s-W_{\eta_n(s)})   \left( \int_0^{\eta_n(s)} \psi_{n,1}(u,s) dW_u \right)  ds,
\]
 \begin{align*}
 L^{n,3}_\tau  
 &= n^{2\alpha+ 1}\int^{\tau}_0\gamma_s (\sigma'\sigma)^2(X_{\eta_n(s)})  (s-\eta_n(s))^\alpha\\
 &  \qquad \times \left[ (W_s-W_{\eta_n(s)})   \left( \int_{\eta_n(s)}^s \psi_{n,1}(u,s) dW_u \right)  -\int_{\eta_n(s)}^s \psi_{n ,1}(u,s) du \right]  ds.
\end{align*}
and
\[
 L^{n,4}_\tau  
 = n^{2\alpha+ 1}\int^{\tau}_0\gamma_s (\sigma'\sigma)^2(X_{\eta_n(s)})  (s-\eta_n(s))^\alpha \int_{\eta_n(s)}^s \psi_{n ,1}(u,s) du ds.
\]
 We claim that
 \begin{equation} \label{p1}
 \lim_{n\rightarrow \infty}  \sup_{\tau\in [0,T]} \| L^{n,2}_\tau +L^{n,3}_\tau  \|_2=0.
 \end{equation}
 Indeed, we can express the sum of these two terms as follows
 \[
 L^{n,2}_\tau +L^{n,3}_\tau
 = n^{2\alpha+ 1}\int^{\tau}_0\gamma_s (\sigma'\sigma)^2(X_{\eta_n(s)})  (s-\eta_n(s))^\alpha  \Lambda^n_s ds,
 \]
 where
 \begin{align*}
 \Lambda^n_s& = (W_s-W_{\eta_n(s)})   \left( \int_0^s \psi_{n,1}(u,s) dW_u \right)  \\
 & \qquad  +(W_s-W_{\eta_n(s)})   \left( \int_{\eta_n(s)}^s \psi_{n,1}(u,s) dW_u \right)  -\int_{\eta_n(s)}^s \psi_{n ,1}(u,s) du.
 \end{align*}
Notice that $\Lambda^n_s $ is a random variable in the second Wiener chaos that satisfies $E[ \Lambda^n_s | \mathcal{F}_{\eta_n(s)}]=0$.
  As a consequence,  if $s_1+\frac 1n <s_2$, then $s_1 < \eta_n(s_2)$ we have
    \begin{align*}
  &  E[(\sigma'\sigma)^2(X_{\eta_n(s_1)})\Lambda^n_{s_1}(\sigma'\sigma)^2(X_{\eta_n(s_2)})\Lambda^n_{s_2} ]\\
  &=E[(\sigma'\sigma)^2(X_{\eta_n(s_1)})\Lambda^n_{s_1}(\sigma'\sigma)^2(X_{\eta_n(s_2)})E[\Lambda^n_{s_2} |\mathcal{F}_{\eta_n(s_2)}] ]=0.
\end{align*}
This implies that
   \begin{align*}
E[|  L^{n,2}_\tau +L^{n,3}_\tau|^2]
&= n^{4\alpha+ 2}
\int_{[0,\tau]^2} \gamma_{s_1} \gamma_{s_2}  \mathbf{1}_{ |s_1-s_2|\le\frac 1n}  (s_1-\eta_n(s_1))^\alpha(s_2-\eta_n(s_2))^\alpha \\
& \qquad \times 
E[(\sigma'\sigma)^2(X_{\eta_n(s_1)})\Lambda^n_{s_1}(\sigma'\sigma)^2(X_{\eta_n(s_2)})\Lambda^n_{s_2} ] ds_1ds_2
 \end{align*}
 We can write
\[
 \Lambda^n_s = (W_s-W_{\eta_n(s)})   \left( \int_0^s \psi_{n,1}(u,s) dW_u \right)   +
 I_2\left( \mathbf{1}_{[\eta_n(s),s]} \otimes \psi_{n,1}(\cdot,s) \right) ,
 \]
where $I_2$ denotes the double  Wiener-It\^o stochastic integral with respect to $W$. Therefore,
$\|  \Lambda^n_s \|_2 \le C n^{-\alpha -1}$, and we obtain the following estimate
\[
E[|  L^{n,2}_\tau +L^{n,3}_\tau|^2] \le  
\int_{[0,T]^2} \gamma_{s_1} \gamma_{s_2}  \mathbf{1}_{ \{|s_1-s_2|\le\frac 1n\}}   ds_1ds_2,
\]
 which converges to zero as $n$ tends to infinity. This proves \eqref{p1}.
 It remains to study the limit of the term $L^{n,4} _\tau$.
 We have
\begin{align*}
L^{n,4} _\tau &=
 n^{2\alpha+ 1} \sum_{j=0} ^{\lfloor n\tau \rfloor}   \int_{\frac jn}^{ \frac {j+1}n \wedge \tau}   \gamma_s (\sigma'\sigma)^2(X_{\frac jn})  (s-\frac jn)^\alpha \int_{\frac jn}^s  [(s- \frac jn)^\alpha - (s-u) ^\alpha]du ds\\
 &= \frac {\alpha } {\alpha+1} n^{2\alpha+ 1} \sum_{j=0} ^{\lfloor n\tau \rfloor}   \int_{\frac jn}^{ \frac {j+1}n \wedge \tau}   \gamma_s (\sigma'\sigma)^2(X_{\frac jn})  (s-\frac jn)^{2\alpha +1}  ds.
\end{align*}
We can replace $\gamma_s$ by $\gamma_{\frac jn}$, and we conclude that
\[
 \lim_{n\rightarrow \infty } L^{n,4} _\tau  = \frac {\alpha} {2(\alpha+1)^2}  \int_0^\tau \gamma_s (\sigma'\sigma)^2(X_s) ds,
 \]
 where the convergence holds in $L^2$, uniformly in $\tau\in [0,T]$.
 
     \medskip \noindent
{\it Proof of  \eqref{limit3b}:}
Set
\begin{align*}
 T^n_\tau  &:=n^{2\alpha+ 1}\int^{\tau}_0\gamma_s\left(\sigma'(X_s)\right)^2\sigma(X_s)  
 \left( \int_0^s \psi_{n,1}(u,s) dW_u \right) \\
 & \qquad \times \left(\int_0^{\eta_n(s)}\psi_{n,2}(u,s)\sigma(X_{\eta_n(u)})\,dW_u\right)  ds.
    \end{align*}
By the same arguments as before,  it suffices to analyze the asymptotic behavior of
\begin{align*}
 T^{n,1}_\tau  &:=n^{2\alpha+ 1}\int^{\tau}_0\gamma_s (\sigma'\sigma)^2(X_{\eta_n(s)})  
\int_0^{\eta_n(s)}\psi_{n,1}(u,s)  \psi_{n,2}(u,s) du ds\\
&=n^{2\alpha+ 1} \sum_{j=0} ^{\lfloor n\tau \rfloor}   \int_{\frac jn}^{ \frac {j+1}n \wedge \tau} 
\gamma_s (\sigma'\sigma)^2(X_{\frac jn})  \\
&\qquad \times 
\sum_{i=0}^{j-1}   \int_{\frac in} ^{\frac {i+1}n} [(s-\frac in)^\alpha- (s-u)^\alpha][ (s -\frac in)^\alpha -( \frac jn -\frac in)^\alpha] duds.
\end{align*}
Making the change of variables  $ns=x$,  $nu=v$  and later $x-j=y$ and $v-i=z$ allows us to write
\begin{align*}
 T^{n,1}_\tau  &=\frac 1n   \sum_{j=0} ^{\lfloor n\tau \rfloor}   \int_{ j}^{ (j+1) \wedge  n\tau} 
\gamma_{\frac xn} (\sigma'\sigma)^2(X_{\frac jn})  
\sum_{i=0}^{j-1}   \int_{i} ^{i+1} [(x-i)^\alpha- (x-v)^\alpha][ (x -i)^\alpha -( j-i)^\alpha] dvdx\\
&= \frac 1n  \sum_{j=0} ^{\lfloor n\tau \rfloor}   \int_{ 0}^{ 1 \wedge  (n\tau-j)} 
\gamma_{\frac {y+j}n} (\sigma'\sigma)^2(X_{\frac jn})   \\
& \qquad \times 
\sum_{i=0}^{j-1}   \int_{0} ^{1} [(y+j-i)^\alpha- (y+j-i-z)^\alpha][ (j -i+y)^\alpha -( j-i)^\alpha] dz dy \\
&=   \frac 1n \sum_{j=0} ^{\lfloor n\tau \rfloor}   \int_{ 0}^{ 1 \wedge  (n\tau-j)} 
\gamma_{\frac {y+j}n} (\sigma'\sigma)^2(X_{\frac jn})   \\
& \qquad \times 
\sum_{k=1}^{j}   \int_{0} ^{1} [(y+k)^\alpha- (y+k-z)^\alpha][ (k+y)^\alpha -k^\alpha] dz dy,
\end{align*}
which converges in $L^2$, uniformly in $\tau\in [0,T]$, as $n $ tends to infinity, to
\[
\left(\sum_{k=1}^{\infty}  \int_0^1  \int_{0} ^{1} [(y+k)^\alpha- (y+k-z)^\alpha][ (k+y)^\alpha -k^\alpha] dz dy\right)
\int_0^\tau \gamma_s (\sigma'\sigma)^2(X_s)ds.
\]

\medskip \noindent
{\it Proof of  \eqref{limit4}:}
Set
\begin{align*}
 G^n_\tau  &:=n^{2\alpha+ 1}\int^{\tau}_0\gamma_s\left(\sigma'(X_s)\right)^2\, \left(\int^s_{\eta_n(s)}(s-\eta_n(u))^{\alpha}\sigma(X_{\eta_n(u)})\,dW_u\right)\,\\&\qquad \times \left(\int^{\eta_n(s)}_0 \psi_{n,2}(u,s)  \sigma(X_{\eta_n(u)})\, dW_u\right) ds,
   \end{align*}
   where $\psi_{n,2}(u,s)$ is defined  in \eqref{psi2}. With the notation
   \[
   M^{n,1}_s=\int^s_{\eta_n(s)}(s-\eta_n(u))^{\alpha}\sigma(X_{\eta_n(u)})\,dW_u
   \]
   and
   \[
   M^{n,2}_s=\int^{\eta_n(s)}_0 \psi_{n,2} (u,s)  \sigma(X_{\eta_n(u)})\, dW_u,
   \]
   we can write
   \[
G^n_\tau=n^{2\alpha+ 1}\int^{\tau}_0 \gamma_s  \left(\sigma'(X_s)\right)^2 M^{n,1}_s M^{n,2}_sds.
   \]
   In order to show that this quantity tends to zero in $L^2$, uniformly in $\tau \in [0,T]$,  we will  show that for any interval $[\tau_1, \tau_2] \subset [0,\tau]$, we have
   \begin{equation} \label{dn2}
   \lim_{n\rightarrow \infty}n^{2\alpha+ 1}\int^{\tau_2}_{\tau_1} \gamma_s M^{n,1}_s M^{n,2}_s ds=0,
   \end{equation}
   where the convergence holds in $L^2$ uniformly in $\tau_1, \tau_2$ and $\tau$,    the case of the random factor  can be handled as before by the usual small blocks / big blocks argument. For the sake of simplicity we omit the details.
In order to show \eqref{dn2}, we notice that $E[M^{n,1}_{s_1} M^{n,2}_{s_1}M^{n,1}_{s_2} M^{n,2}_{s_2}]=0$ if $|s_1- s_2| >\frac 1n$. 
    In fact, if, for instance, $s_1+\frac 1n <s_2$, then $\eta_n(s_1) < \eta_n(s_2)$ and
    \[
    E[M^{n,1}_{s_1} M^{n,2}_{s_1}M^{n,1}_{s_2} M^{n,2}_{s_2}]=E[M^{n,1}_{s_1} M^{n,2}_{s_1}  M^{n,2}_{s_2}  E[M^{n,1}_{s_2} | \mathcal{F}_{\eta_n(s_2)}]]=0.
    \]
    As a consequence,
    \begin{align*} 
   &n^{4\alpha+ 2}  E\left[ \left| \int^{\tau_2}_{\tau_1} \gamma_s M^{n,1}_s M^{n,2}_s ds \right|^2 \right] \\
  & \qquad  =   n^{4\alpha+ 2} \int_{[\tau_1,\tau_2]^2} \gamma_{s_1} \gamma_{s_2}  \mathbf{1}_{\{ |s_1-s_2|\le\frac 1n\}}  E[M^{n,1}_{s_1} M^{n,2}_{s_1}M^{n,1}_{s_2} M^{n,2}_{s_2}] ds_1ds_2    \\
   & \qquad  \le   n^{4\alpha+ 2} \int_{[\tau_1,\tau_2]^2}  \gamma_{s_1} \gamma_{s_2}  \mathbf{1}_{ \{|s_1-s_2|\le\frac 1n\}}  \| M^{n,1}_{s_1}\|_4 \| M^{n,2}_{s_1}\|_4 \|M^{n,1}_{s_2} \|_4 \|M^{n,2}_{s_2}\|_4 ds_1ds_2.
   \end{align*}
    Taking into account that for all $s\in [0,\tau]$,   $ \| M^{n,1}_{s}\|_4 \le C n^{-\alpha -\frac 12}$ and  $ \| M^{n,2}_{s}\|_4 \le C 
   ( \left (\int_{0} ^{\eta_n(s)} \psi_{n,2}^2(u,s) du \right)^{\frac  12}$, we obtain, using Lemma \ref{lema1}
    \begin{align*}
   & n^{4\alpha+ 2}  E\left[ \left| \int^{\tau_2}_{\tau_1}   \gamma_{s}   M^{n,1}_s M^{n,2}_s ds \right|^2 \right]  \\
 & \qquad   \le C n^{2\alpha+1} 
    \int_{[\tau_1,\tau_2]^2}  \gamma_{s} \gamma_{s_2} \mathbf{1}_{\{ |s_1-s_2|\le\frac 1n\}}    \int_{0} ^{\eta_n(s)} \psi_{n,2}^2(u,s) du  \\
    & \qquad \le C  \int_{[0,T]^2}  \gamma_{s_1} \gamma_{s_2} \mathbf{1}_{\{ |s_1-s_2|\le\frac 1n\}}   ds_1 ds_2,
     \end{align*}
     which clearly converges to zero as $n\to \infty$. This completes the proof of  \eqref{limit4}.

    \medskip
\noindent
{\it Proof of {\bf (C4)}}:  
As in the proof of {\bf (C3)} we can replace $\Theta^n_s$ defined in  \eqref{theta1}    by $\widehat{\Theta}_s^n$ defined in \eqref{hattheta}.
  Then, condition (C4) will be a consequence of the following convergences in $L^2$, which are uniform in $\tau\in [0,t]$:
\begin{equation}  \label{lim10}
\lim_{n\rightarrow \infty}   n^{\alpha+\frac 12} \int_0^{\tau}  (t-s)^\alpha
\sigma'(X_s)  
 \left( \int_0^s \psi_{n,1}(u,s) dW_u \right) ds=0,
 \end{equation}
 \begin{equation}  \label{lim11}
\lim_{n\rightarrow \infty}   n^{\alpha+\frac 12}\int_0^{\tau}  (t-s)^\alpha
\sigma'(X_s)  
 \left( \int^s_{\eta_n(s)}\left(s-\eta_n(u)\right)^{\alpha}\sigma(X_{\eta_n(u)})\, dW_u \right) ds=0
 \end{equation}
 and
  \begin{equation}  \label{lim12}
\lim_{n\rightarrow \infty}   n^{\alpha+\frac 12} \int_0^{\tau}  (t-s)^\alpha
\sigma'(X_s)  
 \left( \int^{\eta_n(s)}_0 \psi_{n,2}(u,s)\, \sigma(X_{\eta_n(u)})\, dW_u \right) ds=0.
 \end{equation}
 
 \medskip \noindent
{\it Proof of  \eqref{lim10}:}  We will show first that for any interval $[\tau_1, \tau_2] \subset [0,\tau]$,
the term
\[
S^n=n^{\alpha+\frac 12} \int_{[\tau_1, \tau_2]}  (t-s)^\alpha
 \left( \int_0^s \psi_{n,1}(u,s) dW_u \right) ds
 \]
converges in $L^2$ to zero, uniformly in $\tau_1, \tau_2$ and $\tau$, as $n$ tends to infinity. We have
\begin{align*}
E[|S^n|^2]&= 2n^{2\alpha+1}\int_ {\tau_1} ^{\tau_2} \int_{\tau_1}^{s_2}  (t-s_1)^\alpha(t-s_2)^\alpha
 \int_0^{s_1}  \psi_{n,1}(u,s_1)  \psi_{n,1}(u,s_2)duds_1 ds_2\\
 &\le 2n^{2\alpha+1}\int_ {\tau_1} ^{\tau_2} \int_0^{s_2}  (t-s_1)^\alpha(t-s_2)^\alpha\\
 & \qquad \times
\sum_{i=0}^{ \lfloor ns_1 \rfloor}  \int_{\frac in} ^{\frac {i+1}n \wedge s_1}
[(s_1 -\frac in)^\alpha- (s_1 -u)^\alpha]  [(s_2 -\frac in)^\alpha- (s_2 -u)^\alpha]duds_1 ds_2.
\end{align*}
 With the change of variables $nu=x+i$, we obtain
 \begin{align*}
E[|S^n|^2]&\le 2 \int_ {\tau_1} ^{\tau_2} \int_0^{s_2}  (t-s_1)^\alpha(t-s_2)^\alpha
\sum_{i=0}^{ \lfloor ns_1 \rfloor}  \int_{0} ^{1\wedge (ns_1-i)}
[(ns_1 -i)^\alpha- (ns_1 -i- x)^\alpha] \\
& \qquad \times  [(ns_2 -i)^\alpha- (ns_2 -i-x)^\alpha]dxds_1 ds_2\\
&\le 2 \int_ {0} ^{t} \int_0^{s_2}  (t-s_1)^\alpha(t-s_2)^\alpha \\
& \qquad \times \sum_{k=0}^{ \infty}  \int_{0} ^{1\wedge (ns_1 -\lfloor ns_1 \rfloor+k)}
[(ns_1- \lfloor ns_1 \rfloor +k)^\alpha- (ns_1- \lfloor ns_1 \rfloor+k -x)^\alpha] \\
& \qquad \times  [(ns_2 -\lfloor ns_1 \rfloor+k)^\alpha- (ns_2-\lfloor ns_1 \rfloor+k -x)^\alpha]dxds_1 ds_2,
\end{align*}
which converges to zero as $n$ tends to infinity.
 
 Set
 \[
 S^{n,1}_\tau:=
   n^{\alpha+\frac 12} \int_0^{\tau}  (t-s)^\alpha
\sigma'(X_s)  
 \left( \int_0^s \psi_{n,1}(u,s) dW_u \right) ds.
 \]
 Fix an integer $M\ge 1$ and consider a partition $0=\tau_0 <\tau_1 < \cdots < \tau_M =\tau$, where
$ \tau_i= \frac iM \tau$ for $i=0,1, \dots, M$. We can write
\[
 S^{n,1}_\tau=  S^{n,M,1} _\tau  + S^{n,M,2} _\tau,
 \]
  where
  \[
  S^{n,M,1} _\tau  =
 n^{\alpha+\frac 12}  \sum_{i=0} ^{M-1} \sigma'(X_{\tau_i})  \int_{\tau_i} ^{\tau_{i+1}}     (t-s)^\alpha
 \left( \int_0^s \psi_{n,1}(u,s) dW_u \right) ds
\]
and
 \[
  S^{n,M,2} _\tau  =
 n^{\alpha+\frac 12}  \sum_{i=0} ^{M-1}   \int_{\tau_i} ^{\tau_{i+1}}     (t-s)^\alpha [\sigma'(X_s)- \sigma'(X_{\tau_i})]
 \left( \int_0^s \psi_{n,1}(u,s) dW_u \right) ds
\]
From Step 1, we know that for any fixed $M$
\begin{equation}  \label{L31}
\lim_{n\rightarrow \infty}  \sup_{\tau \in [0,t]}E[|  S^{n,M,1} _\tau|^2]=0.
\end{equation}
Assuming that $ n\ge M$,  the term    $S^{n,M,2} _\tau $ can be estimated as follows
\[
 \| S^{n,M,2} _\tau  \|_2 \le C \sup_{u,v \in [0,t], |u-v| \le \frac {2t}M} \|  (\sigma')^2(X_u) - (\sigma')^2(X_v)  \|_4
n^{\alpha+\frac 12}  \int_0^\tau   (t-s)^\alpha   \left \|  \int_0^s \psi_{n,1}(u,s) dW_u  \right \|_4 ds
\]
From Lemma  \ref{lema2} we get
\[
\sup_{n\ge 1} n^{\alpha+\frac 12}  \int_0^t  (t-s)^\alpha    \| \int_0^s \psi_{n,1}(u,s) dW_u \|_4 ds <\infty.
\]
Therefore,
\begin{equation} \label{L41}
\lim_{M\rightarrow \infty} \sup_{n\ge M} \sup_{\tau\in [0,t]} \| S^{n,M,2} _\tau  \|_2=0.
\end{equation}
In conclusion, \eqref{L31} and \eqref{L41} imply  \eqref{lim10}.

\medskip \noindent
{\it Proof of  \eqref{lim11}:}
We have
\begin{align*}
   P^n_\tau&: =n^{\alpha+\frac12}\int^{\tau}_0(t-s)^{\alpha} \sigma'(X_s)\, 
   \left(\int^s_{\eta_n(s)}(s-\eta_n(u))^{\alpha}\sigma(X_{\eta_n(u)})\,dW_u\right)ds\\
   &=  n^{\alpha+\frac12}\int^{\tau}_0(t-s)^{\alpha} \sigma'(X_{s})  \sigma(X_{\eta_n(s)})  \Xi^{n,2}_s  ds,
   \end{align*}
   where $\Xi^{n,2}_s=\int^s_{\eta_n(s)}(s-\eta_n(u))^{\alpha} dW_u$. Consider the decomposition
   \[
P^n_\tau = P^{n,1}_\tau + P^{n,2}_\tau,
      \]
      where
      \[
      P^{n,1}_\tau =n^{\alpha+\frac12}\int^{\tau}_0(t-s)^{\alpha}  (\sigma'\sigma)(X_{\eta_n(s)})  \Xi^{n,2}_s  ds
      \]
      and
        \[
      P^{n,2}_\tau =n^{\alpha+\frac12}\int^{\tau}_0(t-s)^{\alpha}  [(\sigma'(X_s)- \sigma'(X_{\eta_n(s)})]\sigma(X_{\eta_n(s)})  \Xi^{n,2}_s  ds.
      \]
   The $\beta$-H\"older continuity of $\sigma'$,  Lemma \ref{rglo} and the fact that  $ \|\Xi^{n,2}_s   \|_2 \le C n^{-\alpha -\frac 12}$ imply that
   \[
   \lim_{n\rightarrow \infty} \sup_{\tau \in [0,t]} E[|      P^{n,2}_\tau|^2]=0.
   \]
   To handle the term  $P^{n,1}_\tau$ we write
   \[
   E[|      P^{n,1}_\tau|^2]=n^{2\alpha+1}\int_{[ 0,\tau]^2} (t-s_1)^{\alpha} (t-s_2)^{\alpha}   E\left[(\sigma'\sigma)(X_{\eta_n(s_1)}) 
   (\sigma'\sigma)(X_{\eta_n(s_2)})   \Xi^{n,2}_{s_1} \Xi^{n,2}_{s_2}  \right]   ds_1 ds_2
   \]
We claim that the expectation inside the integral vanishes if $|s_1-s_2 | >\frac 1n$. Indeed, if, for instance $s_1+\frac 1n <s_2$, then $\eta_n(s_1) < \eta_n(s_2)$ and
\begin{align*}
&E\left[(\sigma'\sigma)(X_{\eta_n(s_1)}) 
   (\sigma'\sigma)(X_{\eta_n(s_2)})   \Xi^{n,2}_{s_1} \Xi^{n,2}_{s_2}  \right]  \\
   & \qquad =
   E\left[(\sigma'\sigma)(X_{\eta_n(s_1)}) 
   (\sigma'\sigma)(X_{\eta_n(s_2)})   \Xi^{n,2}_{s_1}  E[ \Xi^{n,2}_{s_2} | \mathcal{F}_{\eta_n(s_2)} \right]]=0.
   \end{align*}
   Therefore,
      \begin{align*}
    E[|      P^{n,1}_\tau|^2]  & =n^{2\alpha+1}\int_{[ 0,\tau]^2} (t-s_1)^{\alpha} (t-s_2)^{\alpha}  \mathbf{1}_{\{|s_1-s_2| \le \frac 1n\}}  \\
    & \qquad \times E\left[(\sigma'\sigma)(X_{\eta_n(s_1)}) 
   (\sigma'\sigma)(X_{\eta_n(s_2)})   \Xi^{n,2}_{s_1} \Xi^{n,2}_{s_2}  \right]   ds_1 ds_2 \\
   &  \le  C n^{2\alpha+1}\int_{[ 0,\tau]^2} (t-s_1)^{\alpha} (t-s_2)^{\alpha}  \mathbf{1}_{\{|s_1-s_2| \le \frac 1n\}}   \|   \Xi^{n,2}_{s_1} \|_4 \| \Xi^{n,2}_{s_2} \|_4    ds_1 ds_2 \\
   &  \le C \int_{[ 0,t]} (t-s_1)^{\alpha} (t-s_2)^{\alpha}  \mathbf{1}_{\{|s_1-s_2| \le \frac 1n\}}     ds_1 ds_2,
   \end{align*}
   which tends to zero as $n\to \infty$. This completes the proof of \eqref{lim11}.
      
\medskip \noindent
{\it Proof of  \eqref{lim12}:}
Set
\[
 F^n_\tau =n^{\alpha+ \frac12}\int^{\tau}_0(t-s)^{\alpha} \sigma'(X_s)\, \left(\int^{\eta_n(s)}_0 \psi_{n,2}(u,s) \sigma(X_{\eta_n(u)})\, dW_u\right) ds,
   \]
   where $\psi_{n,2}(u,s)$ is defined in \eqref{psi2}.  Fix $\delta>0$ such that $\delta <t$ and suppose that $n$ satisfies $\delta+\frac 1n <t$ and  $\frac 1n <\delta$. We decompose the term
   $  F^n_\tau$ as follows
   \begin{align*}
F^n_\tau  &=   n^{\alpha+ \frac12}\int^{(\delta+\frac 1n)\wedge \tau}_0(t-s)^{\alpha} \sigma'(X_{s}) \left(\int^{\eta_n(s)}_0 \psi_{n,2}(u,s) \sigma(X_{\eta_n(u)})\, dW_u\right) ds  \\
 & \qquad +   
 n^{\alpha+ \frac12}\int^{\tau}_{(\delta+\frac 1n)\wedge \tau}(t-s)^{\alpha} [\sigma'(X_s)- \sigma'(X_{\eta_n(s)-\delta})]\, \left(\int^{\eta_n(s)}_0 \psi_{n,2}(u,s) \sigma(X_{\eta_n(u)})\, dW_u\right) ds \\
    & \qquad + n^{\alpha+ \frac12}\int_{(\delta+\frac 1n)\wedge \tau} ^\tau(t-s)^{\alpha} \sigma'(X_{\eta_n(s)-\delta}) \left(\int^{\eta_n(s)-\delta}_0 \psi_{n,2}(u,s) \sigma(X_{\eta_n(u)})\, dW_u\right) ds  \\
    & \qquad + n^{\alpha+ \frac12}\int_{(\delta+\frac 1n)\wedge \tau} ^\tau(t-s)^{\alpha} \sigma'(X_{\eta_n(s)-\delta})\left(\int_{\eta_n(s)-\delta}^{\eta_n(s)} \psi_{n,2}(u,s) \sigma(X_{\eta_n(u)})\, dW_u\right) ds\\
    &=:  B^{n,1} _\tau + B^{n,2} _\tau + B^{n,3} _\tau + B^{n,4} _\tau.
   \end{align*}
   We have the following estimates. Using Minkowkski's, H\"older and Burkholder's inequalities together with Lemma \ref{lema1}, we obtain
   \begin{align}  \nonumber
   \|B^{n,1} _\tau \|_2 & \le  C n^{\alpha+ \frac12}\int^{\delta+\frac 1n }_0(t-s)^{\alpha}   \left(\int^{\eta_n(s)}_0 \psi_{n,2}^2(u,s)   du\right) ^{\frac 12}ds \\ \nonumber
   & \le C \left( t^{\alpha+1} - (t- \delta -\frac 1n)^{\alpha+1} \right)\\  \label{b1}
   & \le C \left(\delta + \frac 1n \right) ^{(\alpha+1) \wedge 1}.
   \end{align}
   For the term  $B^{n,2} _\tau$ the $\beta$-H\"older continuity of $\sigma'$ allows us to write
   \begin{align} \nonumber
   \|B^{n,2} _\tau \|_2 & \le  C \sup_{|u-v| \le \delta +\frac 1n} \| X_u-X_v\|_4^{\beta} n^{\alpha+ \frac12}\int^{t}_0(t-s)^{\alpha}   \left(\int^{\eta_n(s)}_0 \psi_{n,2}^2(u,s)   du\right) ^{\frac 12}ds \\  \label{b2}
   & \le C   \left( \delta +\frac 1n  \right)^{\beta(\alpha+\frac 12)}.
   \end{align}
   For the term   $B^{n,3} _\tau$, we have
   \begin{equation}  \label{dn3}
      \|B^{n,3} _\tau \|_2  \le  C n^{\alpha+ \frac12}\int_{(\delta+\frac 1n)\wedge t} ^t(t-s)^{\alpha}   \left(\int^{\eta_n(s)-\delta}_0 \psi_{n,2}^2(u,s)  du \right)  ^{\frac 12} ds.  
      \end{equation}
      Applying the Mean Value Theorem and taking into account that $\delta >\frac 1n$, we obtain
      \begin{align} \nonumber
      \int^{\eta_n(s)-\delta}_0 \psi_{n,2}^2(u,s)  du  & =  \int^{\eta_n(s)-\delta}_0  [ (s- \eta_n(u))^{\alpha} - (\eta_n(s)- \eta_n(u) )^\alpha] ^2du  \\  \label{dn4}
      & \le C  n^{-2} \delta^{2\alpha-2}.
      \end{align}
      Substituting \eqref{dn4} into \eqref{dn3}, yields
      \begin{equation}  \label{b3}
         \|B^{n,3} _\tau \|_2  \le  n^{\alpha -\frac 12} \delta^{\alpha -1}.
         \end{equation}
         The estimation of the term $B^{n,4} _\tau$ is more involved. First, we write
         \begin{align*}
         E[|B^{n,4} _\tau|^2] =
          n^{2\alpha+ 1}\int_{[(\delta+\frac 1n)\wedge \tau,\tau]^2}
          (t-s_1)^{\alpha}  (t-s_2)^{\alpha} E\left[  \sigma'(X_{\eta_n(s_1)-\delta})  M_{s_1}^{n,3}   \sigma'(X_{\eta_n(s_2)-\delta})  M_{s_2}^{n,3}  \right]
          ds_1ds_2,
         \end{align*}
         where, for any $s\in [0,\tau]$, we use the notation
         \[
         M^{n,3}_s=\int_{\eta_n(s)-\delta}^{\eta_n(s)} \psi_{n,2}(u,s) \sigma(X_{\eta_n(u)})\, dW_u.
         \]
         We claim that for  $|s_1 -s_2 |> \delta+\frac 1n$, the expectation $
         E\left[  \sigma'(X_{\eta_n(s_1)-\delta})  M_{s_1}^{n,3}   \sigma'(X_{\eta_n(s_2)-\delta})  M_{s_2}^{n,3}  \right]$ vanishes. Indeed, if, for instance
         $s_1 < s_2-\delta -\frac 1n$, we have
         \begin{align*}
         &E\left[  \sigma'(X_{\eta_n(s_1)-\delta})  M_{s_1}^{n,3}   \sigma'(X_{\eta_n(s_2)-\delta})  M_{s_2}^{n,3}  \right] \\
        & \qquad  =E\left[  \sigma'(X_{\eta_n(s_1)-\delta})  M_{s_1}^{n,3}   \sigma'(X_{\eta_n(s_2)-\delta})  E[M_{s_2}^{n,3} | \mathcal{F}_{\eta_n(s_2)-\delta}] \right] =0.
         \end{align*}
         As a consequence,
         \begin{align}  \nonumber
          E[|B^{n,4} _\tau|^2] &=
          n^{2\alpha+ 1}\int_{[(\delta+\frac 1n)\wedge \tau,\tau]^2}
          (t-s_1)^{\alpha}  (t-s_2)^{\alpha}  \mathbf{1}_{\{|s_1-s_2| \le \delta +\frac 1n\}}\\ \nonumber
          & \qquad \times E\left[  \sigma'(X_{\eta_n(s_1)-\delta})  M_{s_1}^{n,3}   \sigma'(X_{\eta_n(s_2)-\delta})  M_{s_2}^{n,3}  \right]
          ds_1ds_2 \\ \nonumber
          & \le C  n^{2\alpha+ 1}\int_{[(\delta+\frac 1n)\wedge \tau,\tau]^2}
          (t-s_1)^{\alpha}  (t-s_2)^{\alpha}  \mathbf{1}_{\{|s_1-s_2| \le \delta +\frac 1n\}} \|M_{s_1}^{n,3}\|_4 \| M_{s_2}^{n,3} \|_4        ds_1ds_2 \\  \label{b4}
          & \le  C \int_{[0,t]^2} (t-s_1)^{\alpha}  (t-s_2)^{\alpha}  \mathbf{1}_{\{|s_1-s_2| \le \delta +\frac 1n\}} 
ds_1ds_2,
         \end{align}
         where in the last inequality we used Burkholder's inequality and Lemma \ref{lema1}.
         From the estimates \eqref{b1}, \eqref{b2}, \eqref{b3} and \eqref{b4}, letting first $n$ tend to infinity and later $\delta $ tend to zero, we obtain that
         \eqref{lim12} holds true.
   \end{proof}

   \subsection{Asymptotic behavior of $\wY^{n,1}_t$}
   We recall that   $\wY^{n,1}_t$ satisfies the equation
\[
\wY^{n,1}_t = \wA^{n,1}_t  + \widetilde{C}^n_t +\int_0^t(t-s)^{\alpha}\sigma'(X_s) \wY^{n,1}_s\,dW_s.
\]
In this section we will prove the following result.
\begin{proposition} \label{prop45}
The finite-dimensional distributions of the process $\{\wY^{n,1}_t, t\in [0,T]\}  $ converges in distribution to those of the process $\wY^{\infty,1} =\{\wY^{\infty,1}_t , t\in [0,T]\}$, 
where $\wY^{\infty,1} $ is the solution to the stochastic Volterra equation 
  \[
\wY^{\infty,1}_t = \kappa_2  \int^{t}_0(t-s)^{\alpha}(\sigma'\sigma)(X_s)dB_s   +\int_0^t(t-s)^{\alpha}\sigma'(X_s) \wY^{\infty,1}_{s}\,dW_s,
\]
where   $B$ is a standard Brownian motion independent of $W$.
\end{proposition}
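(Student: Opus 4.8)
The plan is to view $\wY^{n,1}$ as the solution of a linear stochastic Volterra equation whose forcing term has the limit supplied by Proposition~\ref{prop1}, and to pass to the limit by a tightness/uniqueness argument. Set $G^n_t:=\wA^{n,1}_t+\widetilde{C}^n_t$, so that by \eqref{dnr2}
\[
\wY^{n,1}_t = G^n_t + \int_0^t (t-s)^\alpha \sigma'(X_s)\,\wY^{n,1}_s\,dW_s ,
\]
and let $G^\infty_t:=\kappa_2\int_0^t(t-s)^\alpha(\sigma'\sigma)(X_s)\,dB_s$. The first observation is that the asymptotic Knight theorem argument proving Proposition~\ref{prop1} in fact delivers the \emph{joint} convergence of the finite-dimensional distributions of $(G^n,W)$ to those of $(G^\infty,W)$: conditions \textbf{(C1)}--\textbf{(C2)} together with \cite[Chapter~XIII,~Theorem~2.3]{RY} identify the limit of the relevant martingales as a time-changed Brownian motion independent of $W$, jointly with $W$, and $B$ is exactly this independent Brownian motion.

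The second step is a set of uniform estimates. Since $\wA^{n,1}$ and $\widetilde{C}^n$ are stochastic integrals against $W$ of processes bounded in every $L^p$ uniformly in $n$ (because $\|n^{\alpha+\frac12}\int_0^s\psi_{n,1}(u,s)\,dW_u\|_p\le C_p$ by the computation behind Proposition~\ref{first_term}, and $n^{\alpha+\frac12}\|X^n_{\eta_n(s)}-X^n_s\|_p\le C_p$ by \eqref{rglo}), Burkholder's inequality and Lemma~\ref{lema1} give $\sup_n\sup_{t\le T}\|G^n_t\|_p<\infty$ and, splitting $G^n_t-G^n_s$ in the usual way, $\|G^n_t-G^n_s\|_p\le C_p|t-s|^{\alpha+\frac12}$, uniformly in $n$. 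Inserting this into the displayed equation, Burkholder's inequality, Lemma~\ref{lema1} and the extended Gronwall inequality of \cite{Bru,YGD} yield $\sup_n\sup_{t\le T}\|\wY^{n,1}_t\|_p<\infty$ for every $p\ge2$, and then, writing $\wY^{n,1}_t-\wY^{n,1}_s$ as the increment of $G^n$ plus $\int_s^t(t-r)^\alpha\sigma'(X_r)\wY^{n,1}_r\,dW_r$ plus $\int_0^s[(t-r)^\alpha-(s-r)^\alpha]\sigma'(X_r)\wY^{n,1}_r\,dW_r$ and using $\int_0^s[(t-r)^\alpha-(s-r)^\alpha]^2\,dr\le C|t-s|^{2\alpha+1}$, one obtains $\|\wY^{n,1}_t-\wY^{n,1}_s\|_p\le C_p|t-s|^{\alpha+\frac12}$ uniformly in $n$. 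By Kolmogorov's criterion $\{\wY^{n,1}\}$ is tight in $C([0,T])$, hence so is the quadruple $(\wY^{n,1},G^n,X,W)$ in $C([0,T])^4$ ($X$ being a continuous functional of $W$).

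The third step identifies the subsequential limits. Along a subsequence with $(\wY^{n_k,1},G^{n_k},X,W)\Rightarrow(\wY,G^\infty,X,W)$, Skorokhod's representation theorem lets us assume the convergence is almost sure; since the $W$-components are Brownian motions converging uniformly, the limiting $W$ is again a Brownian motion, $(X,W)$ still solves \eqref{volterra}, $\wY$ is adapted to the limiting filtration, $G^\infty=\kappa_2\int_0^\cdot(t-s)^\alpha(\sigma'\sigma)(X_s)\,dB_s$ with $B$ independent of $W$ by the first step, and the identity $\wY^{n_k,1}_t=G^{n_k}_t+\int_0^t(t-s)^\alpha\sigma'(X_s)\wY^{n_k,1}_s\,dW_s$ persists for each $k$. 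Passing to the limit in the It\^o integral — by the Kurtz--Protter stability theorem for stochastic integrals on $[0,t-\varepsilon]$, where the integrand is continuous and converges uniformly a.s.\ and the integrators form a good sequence (all Brownian motions), and controlling the contribution of $[t-\varepsilon,t]$ uniformly in $k$ by the $L^p$ bounds and $\int_{t-\varepsilon}^t(t-s)^{2\alpha}\,ds\to0$ — we conclude that $\wY$ solves
\[
\wY_t = \kappa_2\int_0^t(t-s)^\alpha(\sigma'\sigma)(X_s)\,dB_s + \int_0^t(t-s)^\alpha\sigma'(X_s)\,\wY_s\,dW_s .
\]
This linear equation has a unique $L^2$-bounded adapted solution: the difference $D$ of two solutions satisfies $D_t=\int_0^t(t-s)^\alpha\sigma'(X_s)D_s\,dW_s$, so $\|D_t\|_2^2\le C\int_0^t(t-s)^{2\alpha}\|D_s\|_2^2\,ds$ and $D\equiv0$ by the extended Gronwall inequality. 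Hence every subsequential limit is distributed as $\wY^{\infty,1}$, so $\wY^{n,1}\Rightarrow\wY^{\infty,1}$ in $C([0,T])$, which in particular gives the stated convergence of the finite-dimensional distributions.

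The main obstacle is the third step: one must invoke a stability theorem for It\^o integrals in which, after the Skorokhod embedding, both the integrand and the integrator converge only in distribution, and one must verify that in the limit $W$ is still a Brownian motion — equivalently a semimartingale for the filtration $\sigma(W_r,B_r,\wY_r:r\le t)$, with $\wY$ adapted and $B$ independent of $W$ — since only then is the limiting equation the one claimed. The supporting fact that $(G^n,W)$, and not merely $G^n$, converges is already implicit in the Knight-theorem proof of Proposition~\ref{prop1}, while the singular kernel $(t-s)^\alpha$ for $\alpha<0$ enters only through the square-integrability of $(t-\cdot)^{2\alpha}$ and the Appendix estimates, and so causes no essential difficulty.
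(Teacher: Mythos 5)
Your argument is essentially correct, but it follows a genuinely different route from the paper's. The paper does not use tightness in $C([0,T])$, Skorokhod representation, or a stability theorem for stochastic integrals at all: it discretizes the feedback term, introducing $\wY^{n,m,1}_t=\wA^{n,1}_t+\wC^n_t+\int_0^t(t-s)^\alpha\sigma'(X_s)\wY^{n,m,1}_{\beta_m(s)}\,dW_s$ with $\beta_m(s)=\lfloor ms\rfloor/m$. For fixed $m$ this equation is solved explicitly block by block, so $\wY^{n,m,1}$ is a finite algebraic expression in the forcing $\wA^{n,1}+\wC^n$ and in $W$-integrals that do not depend on $n$; its finite-dimensional convergence is then read off from (the stable form of) Proposition \ref{prop1}. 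The proof is completed by showing $\lim_{m\to\infty}\sup_{n}E[|\wY^{n,1}_t-\wY^{n,m,1}_t|^2]=0$ via the uniform H\"older estimate \eqref{g2} and the extended Gronwall inequality, together with the analogous statement for the limit equation. Your route, by contrast, yields the stronger functional convergence in $C([0,T])$, at the price of two pieces of machinery the paper's argument sidesteps: (i) you must upgrade Proposition \ref{prop1} to joint (stable) convergence of $(\wA^{n,1}+\wC^n,W)$ --- this is indeed what the asymptotic Knight theorem delivers, but it is nowhere stated explicitly, and both proofs secretly need it, so it should be written out; and (ii) after the Skorokhod embedding you must verify that $W$ remains a Brownian motion for the filtration generated by all four limit components, that the prelimit identity \eqref{dnr2} transfers to the representing copies, and that the Kurtz--Protter theorem applies to the truncated integrals on $[0,t-\varepsilon]$. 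Both points are standard (martingale characterization of $W$ in the limit; good-sequence property of Brownian integrators), and your uniform $L^p$ bounds, the H\"older/Kolmogorov tightness step, the $\varepsilon$-truncation of the singular kernel, and the Gronwall uniqueness argument for the limiting linear Volterra equation are all sound; so the proposal is acceptable provided these two verifications are carried out rather than merely flagged.
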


\begin{proof}
Fix an integer $m\ge 1$. Set
\[
\wY^{n,m,1}_t = \wA^{n,1}_t  + \widetilde{C}^n_t +\int_0^t(t-s)^{\alpha}\sigma'(X_s) \wY^{n,m,1}_{\beta_m(s)}\,dW_s,
\]
 where $\beta_m(s) = \frac im $ if $\frac im  \le s < \frac {i+1}m $.  From Section  \ref{sec3} we know that, for any fixed $m\ge 1$,  the finite-dimensional distributions of the process 
 $\wY^{n,m,1}$ converges in distribution to those of the process   $\wY^{\infty,m,1} = \{\wY^{\infty,m,1} _t, t\in [0,T]\}  $ that satisfies
 \[
\wY^{\infty,m,1}_t = \kappa_2  \int^{t}_0(t-s)^{\alpha}(\sigma'\sigma)(X_s)dB_s   +\int_0^t(t-s)^{\alpha}\sigma'(X_s) \wY^{\infty,m,1}_{\beta_m(s)}\,dW_s,
\]
 where  $B$ is a standard Brownian motion independent of $W$.
Then, the proposition  follows immediately  form the following convergences:
 \begin{equation} \label{conv1}
 \lim_{m\rightarrow\infty} \sup_{n \ge 1}E[ |\wY^{n,1}_t -\wY^{n,m,1}_t |^2]=0
 \end{equation}
 and
 \begin{equation} \label{conv2}
 \lim_{m\rightarrow\infty}  E[ |\wY^{\infty,1}_t -\wY^{\infty,m,1}_t |^2]=0.
 \end{equation}
Let us first show  \eqref{conv1}.  We have
 \begin{align}  \nonumber
 E[ |\wY^{n,1}_t -\wY^{n,m,1}_t |^2] &= 
 \int_0^t(t-s)^{2\alpha} E \left[  (\sigma'(X_s))^2  [\wY^{n,1}_s-\wY^{n,m,1}_{\beta_m(s)}]^2\, \right]ds \\ \nonumber
 & \le \| \sigma'\|_\infty^2  \int_0^t(t-s)^{2\alpha}    E[ |\wY^{n,1}_s-\wY^{n,m,1}_{\beta_m(s)}|]^2\, ds \\ \nonumber
  & \le  2\| \sigma'\|_\infty^2  \int_0^t(t-s)^{2\alpha}    E[ |\wY^{n,1}_s-\wY^{n,m,1}_{s}|]^2\, ds  \\   \label{g1}
 & \qquad  +  2\| \sigma'\|_\infty^2  \int_0^t(t-s)^{2\alpha}    E[ |\wY^{n,m,1}_s-\wY^{n,m,1}_{\beta_m(s)}|]^2\, ds. 
 \end{align} 
  Applying Lemma \ref{Hol} and the fact that $\sup_{t\in [0,T]}\sup_{n,m\ge 1} E[|\wY^{n,m,1}_t|^2] <\infty$, we can show that the process   $\wY^{n,m,1}_t$ is H\"older continuous, uniformly in $n$ and $m$. Indeed,
  for any $0 \le t_1 <t_2 \le T$,
  \begin{align} \nonumber
 E[| \wY^{n,m,1}_{t_1}-\wY^{n,m,1}_{t_2}|^2] & \le  C E[|\wA^{n,1}_{t_1} + \widetilde{C}^n_{t_1}- \wA^{n,1}_{t_2} - \widetilde{C}^n_{t_2}|^2] \\ \nonumber
 & \qquad +C \int_0^{t_1}  |(t_2-s)^{ \alpha} -(t_1-s)^\alpha|^2   E[ |\wY^{n,m,2}_{\beta_m(s)}|^2] ds \\ \nonumber
 &  \qquad + C \int_{t_1}^{t_2}  (t_2-s)^{2\alpha}   E[ |\wY^{n,m,1}_{\beta_m(s)}|^2] ds\\ \label{g2}
 & \le C|t_1-t_2|^{2\alpha+1}.
 \end{align}  
Substituting \eqref{g2} into \eqref{g1}, we obtain
\[
 E[ |\wY^{n,1}_t -\wY^{n,m,1}_t |^2] 
   \le  2\| \sigma'\|_\infty^2  \int_0^t(t-s)^{2\alpha}    E[ |\wY^{n,1}_s-\wY^{n,m,1}_{s}|]^2\, ds
   +  C m^{-2\alpha-1}. 
\]
which implies \eqref{conv1}. The proof of the convergence \eqref{conv2} is analogous.
\end{proof}

\subsection{Conclusion}
Part (i) of Theorem  \ref{mainthm}  follows from the convergence \eqref{A1} and Proposition \ref{first_term}. Then, 
 the convergence \eqref{A1} and Proposition \ref{propY} imply that
 \[
  \lim_{n\rightarrow\infty} \sup_{t\in [0,T]}E[ |\wY^{n,1}_t Y^{n,1}_t |^2]=0.
  \]
  Therefore, Part (ii) of Theorem  \ref{mainthm}   is a consequence  of  Proposition \ref{prop45}.
  This concludes the proof of Theorem \ref{mainthm}.

\section{Proof of Theorem \ref{thmjoint}}

Theorem \ref{thmjoint} follows from the next proposition which ensures the asymptotic independence of the random variables $N^{n,1} _{\eta_n(t)}$ and $Y^{n,1}_t$.

\begin{proposition}
For any $t\in (0,T]$ and real numbers $\lambda, \mu \in \R$, the following convergence holds true
\[
\lim_{n\rightarrow \infty}
E \left( \exp  \left(i\lambda  N^{n,1}_{\eta_n(t)} + i \mu Y^{n,1}_{t} \right) \right)
 =E \left( \exp   \left(i \lambda Z_{t} \right) \right)   E\left( \exp  \left( i\mu Y^{\infty,1}_{t} \right)\right),
\]
where $N^{n,1} _t$ has been defined in \eqref{ene} and $Y^{n,1}_t$ is defined in \eqref{decom}.
\end{proposition}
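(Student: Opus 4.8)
The plan is to combine the reductions already established in Section 4 with a \emph{localization in time} that upgrades the asymptotic independence into an \emph{exact} independence, up to $L^{2}$-errors that vanish in the limit. First I would pass from $Y^{n,1}_t$ to the modified errors: by Proposition \ref{propY}, $\sup_t E[|Y^{n,1}_t-\wY^{n,1}_t|^{2}]\to 0$, and by \eqref{conv1}, \eqref{conv2} (with $Z_t$ not depending on $m$), using $|e^{ix}-e^{iy}|\le|x-y|$, it suffices to prove, for each fixed $m\ge1$,
\[
\lim_{n\to\infty} E\!\left[\exp\!\big(i\lambda N^{n,1}_{\eta_n(t)}+i\mu\,\wY^{n,m,1}_t\big)\right]
= E\!\left[\exp\!\big(i\lambda Z_t\big)\right]\,E\!\left[\exp\!\big(i\mu\,\wY^{\infty,m,1}_t\big)\right],
\]
it being understood that $\wY^{\infty,1}_t=Y^{\infty,1}_t$ solve the same equation and $\wY^{\infty,m,1}_t\to\wY^{\infty,1}_t$ in $L^{2}$ as $m\to\infty$.

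Fix $\epsilon\in(0,t)$ and, for $n$ large enough that $T_{n,\epsilon}:=\eta_n(t)-\epsilon>0$, split $N^{n,1}_{\eta_n(t)}=R^{n,\epsilon}+N^{n,1,\epsilon}_{\eta_n(t)}$ with $N^{n,1,\epsilon}_{\eta_n(t)}=n^{\alpha+\frac12}\int_{T_{n,\epsilon}}^{\eta_n(t)}\psi_{n,1}(s,\eta_n(t))\,dW_s$. The change of variables of \eqref{problem} gives $E[|R^{n,\epsilon}|^{2}]=\sum_{k\ge\lceil n\epsilon\rceil}\int_0^1(k^{\alpha}-(k-y)^{\alpha})^{2}\,dy\le C(n\epsilon)^{2\alpha-1}$, which tends to $0$ as $n\to\infty$ for fixed $\epsilon$ since $2\alpha-1<0$; the same computation shows that $\Var(N^{n,1,\epsilon}_{\eta_n(t)})$ has the same limit as $\Var(N^{n,1}_{\eta_n(t)})$, so $N^{n,1,\epsilon}_{\eta_n(t)}$ converges in law to $Z_t$ as well. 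Let $\wY^{n,m,1,\epsilon}_\cdot$ be the process obtained from $\wY^{n,m,1}_\cdot$ by truncating every $dW$-integral (in $\wA^{n,1}_\cdot$, in $\wC^{n}_\cdot$, and in the Volterra iteration) to the time interval $[0,T_{n,\epsilon}]$. Since $\psi_{n,1}(\cdot,\eta_n(t))$ is deterministic, $N^{n,1,\epsilon}_{\eta_n(t)}$ depends only on the increments of $W$ on $[T_{n,\epsilon},\eta_n(t)]$, hence is independent of $\mathcal{F}_{T_{n,\epsilon}}$, whereas $\wY^{n,m,1,\epsilon}_t$ is $\mathcal{F}_{T_{n,\epsilon}}$-measurable; consequently $N^{n,1,\epsilon}_{\eta_n(t)}$ and $\wY^{n,m,1,\epsilon}_t$ are \emph{exactly} independent. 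Moreover $\wY^{n,m,1}_\cdot$ and $\wY^{n,m,1,\epsilon}_\cdot$ coincide on $[0,T_{n,\epsilon}]$, so that (recalling $\wA^{n,1}_\cdot+\wC^n_\cdot=n^{\alpha+\frac12}\int_0^\cdot(\cdot-s)^{\alpha}\sigma'(X_s)\Theta^n_s\,dW_s$ with $\Theta^n_s$ as in \eqref{theta1})
\[
\wY^{n,m,1}_t-\wY^{n,m,1,\epsilon}_t
= n^{\alpha+\frac12}\!\!\int_{T_{n,\epsilon}}^{t}\!(t-s)^{\alpha}\sigma'(X_s)\Theta^n_s\,dW_s
+\int_{T_{n,\epsilon}}^{t}(t-s)^{\alpha}\sigma'(X_s)\,\wY^{n,m,1}_{\beta_m(s)}\,dW_s,
\]
and, using $\int_{T_{n,\epsilon}}^t(t-s)^{2\alpha}ds\le C(\epsilon+\tfrac1n)^{2\alpha+1}$, Burkholder's inequality, the uniform bound $\sup_{n,s}\|n^{\alpha+\frac12}\Theta^n_s\|_2<\infty$ (from \eqref{sup}, \eqref{rglo} and Lemma \ref{lema1}) and $\sup_{t,n,m}E[|\wY^{n,m,1}_t|^{2}]<\infty$ (established in the proof of Proposition \ref{prop45}), one obtains $\sup_{n\ge1/\epsilon,\,m\ge1}\|\wY^{n,m,1}_t-\wY^{n,m,1,\epsilon}_t\|_2\le C\epsilon^{\alpha+\frac12}$. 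No Gronwall argument is needed here, precisely because the two processes agree on $[0,T_{n,\epsilon}]$.

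The conclusion is then bookkeeping with characteristic functions. By $|e^{ix}-e^{iy}|\le|x-y|$ and the exact independence just obtained,
\[
E\!\left[e^{i\lambda N^{n,1}_{\eta_n(t)}+i\mu\wY^{n,m,1}_t}\right]
= E\!\left[e^{i\lambda N^{n,1,\epsilon}_{\eta_n(t)}}\right]\,E\!\left[e^{i\mu\wY^{n,m,1}_t}\right]+\rho^{n,\epsilon}_m,
\qquad |\rho^{n,\epsilon}_m|\le |\lambda|\,\|R^{n,\epsilon}\|_2+C|\mu|\,(\epsilon+\tfrac1n)^{\alpha+\frac12}.
\]
Letting $n\to\infty$ for fixed $\epsilon$, the first factor tends to $E[e^{i\lambda Z_t}]$ by Proposition \ref{first_term}, the second to $E[e^{i\mu\wY^{\infty,m,1}_t}]$ by Proposition \ref{prop45}, and $\|R^{n,\epsilon}\|_2\to0$; hence
\[
\limsup_{n\to\infty}\Big|E\!\left[e^{i\lambda N^{n,1}_{\eta_n(t)}+i\mu\wY^{n,m,1}_t}\right]-E[e^{i\lambda Z_t}]\,E[e^{i\mu\wY^{\infty,m,1}_t}]\Big|\le C(|\lambda|+|\mu|)\,\epsilon^{\alpha+\frac12}.
\]
As $\epsilon\in(0,t)$ is arbitrary, the displayed limit of the first paragraph follows; finally, letting $m\to\infty$ through \eqref{conv1} and \eqref{conv2}, and then replacing $\wY^{n,1}_t$ by $Y^{n,1}_t$ via Proposition \ref{propY} (and $\wY^{\infty,1}_t$ by $Y^{\infty,1}_t$), yields the asserted factorization.

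I expect the main obstacle to be the construction and control of $\wY^{n,m,1,\epsilon}_\cdot$ in the second paragraph: the truncation must act on all the stochastic integrals (in particular on the iterated Volterra structure) so that the truncated object is genuinely $\mathcal{F}_{T_{n,\epsilon}}$-measurable, this being exactly what produces the \emph{exact} independence that drives the argument, while simultaneously the $L^{2}$-distance to $\wY^{n,m,1}_\cdot$ must be $O(\epsilon^{\alpha+\frac12})$ \emph{uniformly in both $n$ and $m$} -- which hinges on the singular-kernel estimate $\int_{t-\epsilon}^{t}(t-s)^{2\alpha}ds=O(\epsilon^{2\alpha+1})$ together with the uniform moment bounds on $n^{\alpha+\frac12}\Theta^n_s$ and on $\wY^{n,m,1}$. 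Everything else is controlled by estimates already available in Sections 3 and 4.
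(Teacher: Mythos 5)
Your proposal is correct and rests on the same core mechanism as the paper's proof: localize in time near $t$ so that the Gaussian term $N^{n,1}_{\eta_n(t)}$ is (up to an $L^2$-negligible remainder) a functional of the increments of $W$ on $[t-\delta,\eta_n(t)]$, while the error term is (up to a remainder that is small uniformly in $n$ as $\delta\to 0$) measurable with respect to $\mathcal{F}_{t-\delta}$, whence exact independence and factorization of the characteristic function. The one place where you diverge is the treatment of the second component: you route through the discretized modified error $\wY^{n,m,1}$ and build a truncated Volterra iteration, whereas the paper works directly with $Y^{n,1}_t=n^{\alpha+\frac12}\int_0^t(t-s)^\alpha\,\xi^n_s\,dW_s$, which is already a single stochastic integral with adapted integrand $\xi^n_s=\sigma(X^n_{\eta_n(s)})-\sigma(X_s)$; cutting it at $t-\delta$ immediately yields an $\mathcal{F}_{t-\delta}$-measurable variable, and the tail over $[t-\delta,t]$ is $O(\delta^{\alpha+\frac12})$ in $L^2$ uniformly in $n$ from $E[|\xi^n_s|^2]\le Cn^{-2\alpha-1}$. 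Your construction is sound -- in fact the "main obstacle" you flag is a non-issue, since $\wY^{n,m,1}_r$ is automatically $\mathcal{F}_r$-measurable for $r\le T_{n,\epsilon}$, so only the outermost integrals at time $t$ need truncating -- but the detour through $m$, together with the limits \eqref{conv1}--\eqref{conv2}, is machinery the paper's direct argument does not need; the paper only invokes the convergence $E[e^{i\mu Y^{n,1}_t}]\to E[e^{i\mu Y^{\infty,1}_t}]$ from part (ii) of Theorem \ref{mainthm} at the very last step.
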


\begin{proof}
Set
\[
 \Lambda_n:=\exp  \left(i\lambda  N^{n,1}_{\eta_n(t)} + i \mu Y^{n,1}_{t} \right).
 \]
 Recall that, for any $t\in [0,T]$,
  \[ 
  N^{n,1}_t= n^{\alpha+\frac 12}\int_0^{\eta_n(t)} \psi_{n,1}(s,\eta_n(t)) dW_s
 \]
 and
 \[
 Y^{n,1}_t=n^{\alpha+\frac 12} \int_0^{t} (t-s)^\alpha  \xi^n_sdW_s,
 \]
  where    $\xi^n_s=\sigma(X^n_{\eta_n(s)}) -\sigma(X_s)$.
Let $\delta>0$ be such that $\delta< t$. Assume that $\frac 1n <\frac  \delta 2$. We can decompose the random variable $\Lambda_n$ as follows 
\[
\Lambda_n= \Lambda^{(1)}_{n,\delta}+\Lambda^{(2)}_{n,\delta},
\]
where
\begin{align*}
\Lambda^{(1)}_{n,\delta} &=\exp  \left( i  \mu  Y^{n,1}_{t} +i  \lambda n^{\alpha+\frac 12}\int_{t- \delta} ^{\eta_n(t)}\psi_{n,1}(s,\eta_n(t)) dW_s
  \right) \\
  & \qquad \times \left[\exp  \left( i \lambda n^{\alpha+\frac 12}\int^{t- \delta} _0\psi_{n,1}(s,\eta_n(t)) dW_s \right) -1\right]
\end{align*}
and
\[
\Lambda^{(2)}_{n,\delta}=
\exp  \left( i\mu  Y^{n,1}_{t} +i  \lambda n^{\alpha+\frac 12} \int_{t- \delta} ^{\eta_n(t)}\psi_{n,1}(s,\eta_n(t)) dW_s
  \right).
\]
 We claim that for any fixed $\delta>0$, 
 \begin{equation} \label{L1}
 \lim_{n\rightarrow \infty} E[\Lambda^{(1)}_{n,\delta}]=0.
 \end{equation}
 Indeed, we can write
 \begin{align*}
 |E[\Lambda^{(1)}_{n,\delta}]| & \le   | \lambda| n^{\alpha+\frac 12} E \left[ \left| 
 \int^{t- \delta} _0\psi_{n,1}(s,\eta_n(t)) dW_s \right| \right] \\
 &\le  | \lambda| n^{\alpha+\frac 12}   \left( \int^{t- \delta} _0\psi^2_{n,1}(s,\eta_n(t)) ds \right)^{\frac 12}.
 \end{align*}
 By the Mean Value Theorem, taking into account that $s\le t-\delta$ and $\frac 1n < \frac \delta 2$, we obtain
 \begin{align*}
 \psi^2_{n,1}(s,\eta_n(t))& =[(\eta_n(t)- \eta_n(s))^\alpha- (\eta_n(t)-s)^\alpha]^2 \\
 & \le (\eta_n(t)-s)^{2\alpha-2} (s- \eta_n(s))^2
 \le( \delta/2) ^{2\alpha-2} n^{ -2},
 \end{align*}
   and the convergence \eqref{L1} follows easily. We now make a further decomposition
   \[
   \Lambda^{(2)}_n= \Lambda^{(3)}_{n,\delta}+\Lambda^{(4)}_{n,\delta},
\]
 where
 \begin{align*}
\Lambda^{(3)}_{n,\delta} &=
\exp  \left( i\mu   n^{\alpha+\frac 12} \int_0^{t-\delta} (t-s)^\alpha \xi^n_s dW_s +i  \lambda n^{\alpha+\frac 12} \int_{t- \delta} ^{\eta_n(t)}\psi_{n,1}(s,\eta_n(t)) dW_s
  \right) \\
  & \qquad \times \left[ \exp \left(i\mu   n^{\alpha+\frac 12} \int_{t-\delta}^t (t-s)^\alpha \xi^n_s dW_s  \right)-1 \right]
\end{align*}
and
\[
\Lambda^{(4)}_{n,\delta} =
\exp  \left( i\mu   n^{\alpha+\frac 12} \int_0^{t-\delta} (t-s)^\alpha \xi^n_s dW_s +i  \lambda n^{\alpha+\frac 12} \int_{t- \delta} ^{\eta_n(t)}\psi_{n,1}(s,\eta_n(t)) dW_s
  \right).
  \]
  We claim that  
 \begin{equation} \label{L2a}
 \lim_{\delta \rightarrow 0}\limsup_{n\rightarrow \infty} E[\Lambda^{(3)}_{n,\delta}]=0.
 \end{equation}
 Indeed, we can write
 \begin{align*}
 |E[\Lambda^{(3)}_{n,\delta}]| & \le   | \mu| n^{\alpha+\frac 12} E \left[ \left| 
 \int_{t- \delta} ^t (t-s)^\alpha \xi^n_s dW_s \right| \right] \\
 &\le  | \mu | n^{\alpha+\frac 12}   \left( \int_{t- \delta} ^t (t-s)^{2\alpha} E[ | \xi^n_s|^2] ds \right)^{\frac 12}.
 \end{align*}
 Taking into account that  $E[ | \xi^n_s|^2] ds \le C n^{-2\alpha-1}$, the limit  \eqref{L2a} follows immediately.
 Finally, we can write
 \begin{align*}
 & E[\Lambda^{(4)}_{n,\delta} ] = E[ E[\Lambda^{(4)}_{n,\delta} | \mathcal{F}_{t-\delta}]]  \\
 &=  E\left[ \exp  \left( i\mu   n^{\alpha+\frac 12} \int_0^{t-\delta} (t-s)^\alpha \xi^n_s dW_s \right) 
 E\left[  \exp \left( i  \lambda n^{\alpha+\frac 12} \int_{t- \delta} ^{\eta_n(t)}\psi_{n,1}(s,\eta_n(t)) dW_s \right) | \mathcal{F}_{t-\delta}\right] \right] \\
 &= E\left[ \exp  \left( i\mu   n^{\alpha+\frac 12} \int_0^{t-\delta} (t-s)^\alpha \xi^n_s dW_s \right)  \right]
 \exp \left(-\frac {\lambda^2} 2 n^{2\alpha+1} \int_{t- \delta} ^{\eta_n(t)}\psi_{n,1}^2(s,\eta_n(t)) ds \right).
 \end{align*}
 We claim that
 \begin{equation} \label{L3}
 \lim_{n\rightarrow \infty}\exp \left(-\frac {\lambda^2} 2 n^{2\alpha+1} \int_{t- \delta} ^{\eta_n(t)}\psi_{n,1}^2(s,\eta_n(t)) ds \right)
 = \exp\left( -\frac {\lambda^2} 2 \kappa_1^2 \right)
 \end{equation}
 and
 \begin{equation} \label{L4}
 \lim_{\delta \rightarrow 0} \limsup_{n\rightarrow \infty} \left|E\left[ \exp  \left( i\mu   n^{\alpha+\frac 12} \int_0^{t-\delta} (t-s)^\alpha \xi^n_s dW_s \right)  \right] -E \left[ \exp \left(i \mu Y^{n,1}_t \right) \right]  \right| =0.
 \end{equation}
 The convergence  \eqref{L3} follows from Proposition \ref{first_term} and the previous computations.  The proof of \eqref{L4} is analogous to that of \eqref{L2}. In conclusion, from  \eqref{L1},  \eqref{L2}, \eqref{L3} and  \eqref{L4},  we can write
 \begin{align*}
 \lim_{n\rightarrow \infty} E[ \Lambda_n] &=\exp\left( -\frac {\lambda^2} 2 \kappa_1 \right) \lim_{n\rightarrow \infty} E \left[ \exp \left(i \mu Y^{n,1}_t \right) \right] \\
 &=E\left[  \exp \left( i \lambda Z_t \right) \right]E \left[ \exp \left(i \mu Y^{\infty,1}_t \right) \right],
 \end{align*}
 which completes the proof of the proposition.
  \end{proof}

 \section{Appendix}
In the next lemma we establish the H\"older continuity in $L^2$ of the processes $\wA^{n,1}_t$ and $\wC_t$ defined in
\eqref{dn3} and \eqref{C2}, respectively.

\begin{lemma}  \label{Hol}
Consider the processes $\wA^{n,1}_t$ and $\wC_t$ defined in
\eqref{dn3} and \eqref{C2}, respectively.
There exists a constant $C>0$, such that for all $t_1, t_2\in [0,T]$,
\begin{equation} \label{h1}
E[| \wA^{n,1}_{t_1} -\wA^{n,1}_{t_2}|^2] \le C |t_1-t_2|^{2\alpha+1},
\end{equation}
and
\begin{equation} \label{h2}
E[| \wC_{t_1} -\wC_{t_2}|^2] \le C |t_1-t_2|^{2\alpha+1}.
\end{equation}
\end{lemma}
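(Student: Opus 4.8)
The plan is to treat the two processes simultaneously, since both have the common form
\[
\wA^{n,1}_t=n^{\alpha+\frac12}\int_0^t (t-s)^{\alpha}\, g_s\, \Phi^n_s\, dW_s,
\]
where in the case of $\wA^{n,1}_t$ (see \eqref{dnr3}) one has $g_s=(\sigma'\sigma)(X_s)$ and $\Phi^n_s=\int_0^s\psi_{n,1}(u,s)\,dW_u$, and in the case of $\wC^n_t$ (see \eqref{C2}) one has $g_s=\sigma'(X_s)$ and $\Phi^n_s=X^n_{\eta_n(s)}-X^n_s$. Note first that $\sigma'$ is bounded, because $\sigma$ is Lipschitz; hence, by \eqref{sup} and the linear growth of $\sigma$, $\sup_{s\in[0,T]}E[g_s^4]<\infty$ in both cases. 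The first step is the uniform moment estimate
\[
\sup_{n\ge1}\ \sup_{s\in[0,T]} n^{2\alpha+1}\, E\!\left[g_s^2 (\Phi^n_s)^2\right]\le C.
\]
For $\wA^{n,1}_t$ this follows because $\Phi^n_s$ is centred Gaussian with variance $\int_0^s\psi_{n,1}^2(u,s)\,du$, so $E[(\Phi^n_s)^4]=3\big(\int_0^s\psi_{n,1}^2(u,s)\,du\big)^2\le C n^{-2(2\alpha+1)}$ by Lemma \ref{lema1}, and then Cauchy--Schwarz gives $E[g_s^2(\Phi^n_s)^2]\le \|g_s\|_4^2\|\Phi^n_s\|_4^2\le C n^{-(2\alpha+1)}$; for $\wC^n_t$ it follows directly from the boundedness of $\sigma'$ and the estimate $E[(X^n_{\eta_n(s)}-X^n_s)^2]\le C|\eta_n(s)-s|^{2\alpha+1}\le C n^{-(2\alpha+1)}$, which is a consequence of \eqref{rglo} and $|\eta_n(s)-s|\le\frac1n$.

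Next, fix $0\le t_1<t_2\le T$ and decompose, using the kernel splitting,
\[
\wA^{n,1}_{t_2}-\wA^{n,1}_{t_1}
= n^{\alpha+\frac12}\int_0^{t_1}\big[(t_2-s)^{\alpha}-(t_1-s)^{\alpha}\big] g_s\,\Phi^n_s\, dW_s
+ n^{\alpha+\frac12}\int_{t_1}^{t_2}(t_2-s)^{\alpha} g_s\,\Phi^n_s\, dW_s,
\]
and likewise for $\wC^n$. Applying the It\^o isometry to each term and inserting the bound $n^{2\alpha+1}E[g_s^2(\Phi^n_s)^2]\le C$, we reduce the problem to the two deterministic estimates
\[
\int_0^{t_1}\big[(t_2-s)^{\alpha}-(t_1-s)^{\alpha}\big]^2\,ds\le C|t_2-t_1|^{2\alpha+1}
\qquad\text{and}\qquad
\int_{t_1}^{t_2}(t_2-s)^{2\alpha}\,ds=\frac{|t_2-t_1|^{2\alpha+1}}{2\alpha+1},
\]
the latter being finite since $2\alpha+1>0$. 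Combining the two pieces yields \eqref{h1} and \eqref{h2} with a common constant.

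The only point requiring a little care is the first deterministic integral, especially for $\alpha<0$; I expect this to be the main (and in fact only) technical obstacle, and it is of exactly the type already used repeatedly in the paper. After the change of variables $r=t_1-s$ and writing $h=t_2-t_1$ it becomes $\int_0^{t_1}\big[(h+r)^{\alpha}-r^{\alpha}\big]^2\,dr$; splitting at $r=h$, the part $r\le h$ is bounded by $\int_0^h\big(r^{\alpha}\vee h^{\alpha}\big)^2\,dr\le C h^{2\alpha+1}$ (using $2\alpha+1>0$), and the part $r>h$ is controlled by the mean value theorem, $|(h+r)^{\alpha}-r^{\alpha}|\le |\alpha|\,h\, r^{\alpha-1}$, giving $\alpha^2 h^2\int_h^\infty r^{2\alpha-2}\,dr=\frac{\alpha^2}{1-2\alpha}h^{2\alpha+1}$ (using $2\alpha-2<-1$). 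This completes the plan.
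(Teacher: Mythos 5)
Your proof is correct and follows essentially the same route as the paper's: split the kernel at $t_1$, apply the It\^o isometry, and insert the uniform bounds $n^{2\alpha+1}\int_0^s\psi_{n,1}^2(u,s)\,du\le C$ (for $\wA^{n,1}$) and \eqref{rglo} (for $\wC^n$), reducing everything to $\int_0^{t_1}[(t_2-s)^{\alpha}-(t_1-s)^{\alpha}]^2\,ds\le C|t_2-t_1|^{2\alpha+1}$. The only differences are cosmetic: you prove that last deterministic estimate directly (where the paper invokes the variance of a fractional Brownian motion increment with $H=\alpha+\frac12$), your Cauchy--Schwarz separation of $g_s$ from $\Phi^n_s$ is if anything more careful than the paper's, and the bound on $\int_0^s\psi_{n,1}^2(u,s)\,du$ is Lemma \ref{lema2}, not Lemma \ref{lema1}.
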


\begin{proof}
Let us first show \eqref{h1}. For any $0\le t_1<t_2\le T$ we have
\begin{align*}
E[| \wA^{n,1}_{t_1} -\wA^{n,1}_{t_2}|^2]  &=
n^{2\alpha+1} \int_0^{t_1}  |(t_2-s)^{ \alpha} -(t_1-s)^\alpha|^2 E\left[ (\sigma'\sigma)^2(X_s)    \left( \int_0^s \psi_{n,1}(u,s)dW_u \right)^2 \right] ds \\
&\qquad + n^{2\alpha+1} \int_{t_1} ^{t_2}  (t_2-s)^{2 \alpha} E\left[ (\sigma'\sigma)^2(X_s)    \left( \int_0^s \psi_{n,1}(u,s)dW_u \right)^2 \right] ds  \\
&\le  C n^{2\alpha+1} \int_0^{t_1}  |(t_2-s)^{ \alpha} -(t_1-s)^\alpha|^2    \int_0^s \psi^2_{n,1}(u,s)  du  ds \\
&\qquad + C n^{2\alpha+1} \int_{t_1} ^{t_2}  (t_2-s)^{2 \alpha}   \int_0^s \psi^2_{n,1}(u,s) du  ds .
\end{align*}
Applying Lemma \ref{lema2}, yields,
\begin{align*}
E[| \wA^{n,1}_{t_1} -\wA^{n,1}_{t_2}|^2]  &\le C  \int_0^{t_1}  |(t_2-s)^{ \alpha} -(t_1-s)^\alpha|^2   ds + C  \int_{t_1} ^{t_2}  (t_2-s)^{2 \alpha}ds  \\
&\le  C |t_1-t_2|^{2\alpha+1}.
\end{align*}
The second inequality in the previous display can be deduced from the $L^2$ norm of the increment on the interval $[t_1, t_2]$ of a fractional Brownian motion with Hurst parameter $H=\alpha +\frac 12$. The proof of the inequality \eqref{h2} is analogous, but instead of  Lemma \ref{lema2} we make use of inequality \eqref{rgla}. \end{proof}

The next results are technical lemmas that have been used several time in the proofs.

\begin{lemma} \label{alpha1}
For any $\alpha \in (-\frac{1}{2},\frac12) $ and for any $\delta >0$,there are  positive constants $C_1$, $C_2$ and $C_3$, such that
for all $t\in [0,1]$, 
\begin{equation} \label{ineq1}
\int^{t}_0\left[(t-\eta_n(s))^{\alpha}-(t-s)^{\alpha}\right]^2\, ds
\leq C_1\, n^{-2\alpha-1}
\end{equation}
and
\begin{equation} \label{ineq2}
\int^{(t-\delta)_+}_0\left[(t-\eta_n(s))^{\alpha}-(t-s)^{\alpha}\right]^2\, ds
\leq C_2 n^{-2} \delta^{2\alpha-1}.
\end{equation}
\end{lemma}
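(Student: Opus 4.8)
The plan is to establish the two bounds separately, beginning with \eqref{ineq1}, which contains the main idea, and deriving \eqref{ineq2} afterward by a shorter computation. For \eqref{ineq1} I would first rescale: the change of variables $u=ns$ turns the integral into $n^{-2\alpha-1}J(nt)$, where
\[
J(\tau):=\int_0^\tau\bigl[(\tau-\lfloor u\rfloor)^\alpha-(\tau-u)^\alpha\bigr]^2\,du ,
\]
so it suffices to prove $\sup_{\tau\ge0}J(\tau)\le C_1$ for some $C_1=C_1(\alpha)$ (the hypothesis $t\in[0,1]$ plays no role here). Decomposing $J(\tau)$ over the unit intervals $[j,j+1)$, substituting $u=j+y$, and writing $\kappa_j:=\tau-j$, one gets
\[
J(\tau)=\sum_{j=0}^{\lfloor\tau\rfloor}\int_0^{1\wedge\kappa_j}\bigl[\kappa_j^\alpha-(\kappa_j-y)^\alpha\bigr]^2\,dy ,
\]
where $\kappa_j$ ranges over $\{\tau\},\{\tau\}+1,\dots,\tau$, with $\{\tau\}=\tau-\lfloor\tau\rfloor$ the fractional part; it remains to sum these summands.

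I would split the sum according to the size of $\kappa_j$. For the at most two indices with $\kappa_j<2$, I would use $[\kappa^\alpha-(\kappa-y)^\alpha]^2\le2\kappa^{2\alpha}+2(\kappa-y)^{2\alpha}$ together with $\int_0^{1\wedge\kappa}\kappa^{2\alpha}\,dy=\kappa^{2\alpha}(1\wedge\kappa)$ and $\int_0^{1\wedge\kappa}(\kappa-y)^{2\alpha}\,dy\le\int_0^2 w^{2\alpha}\,dw$; both quantities are finite and bounded uniformly for $\kappa\in(0,2)$ precisely because $2\alpha+1>0$, so each such summand is at most a constant depending only on $\alpha$. For the indices with $\kappa_j\ge2$, the Mean Value Theorem applied to $x\mapsto x^\alpha$ gives, for $y\in[0,1]$, $|\kappa^\alpha-(\kappa-y)^\alpha|\le|\alpha|(\kappa-1)^{\alpha-1}y\le|\alpha|(\kappa-1)^{\alpha-1}$ (using $\kappa-y\ge\kappa-1\ge1$ and $\alpha-1<0$), hence the summand is bounded by $\alpha^2(\kappa-1)^{2\alpha-2}$; since the values $\kappa_j-1$ occurring here are $\ge1$ and spaced by one, their sum is at most $\alpha^2\sum_{\ell\ge1}\ell^{2\alpha-2}<\infty$, the convergence being exactly where $\alpha<\tfrac12$ is used. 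Adding the two contributions gives $J(\tau)\le C_1$ uniformly in $\tau$, which proves \eqref{ineq1}.

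For \eqref{ineq2} I would first dispose of the case $t\le\delta$, where the domain of integration is empty. If $t>\delta$, then on $s\in[0,t-\delta]$ both $t-\eta_n(s)$ and $t-s$ lie in $[\delta,\infty)$, and the Mean Value Theorem gives $|(t-\eta_n(s))^\alpha-(t-s)^\alpha|\le|\alpha|(t-s)^{\alpha-1}(s-\eta_n(s))\le|\alpha|n^{-1}(t-s)^{\alpha-1}$, where the factor $(t-s)^{\alpha-1}$ is retained (legitimate since the intermediate point is $\ge t-s$ and $\alpha-1<0$). Squaring and integrating with $r=t-s$ bounds the left-hand side of \eqref{ineq2} by $\alpha^2n^{-2}\int_\delta^{t}r^{2\alpha-2}\,dr=\frac{\alpha^2}{1-2\alpha}n^{-2}(\delta^{2\alpha-1}-t^{2\alpha-1})\le\frac{\alpha^2}{1-2\alpha}n^{-2}\delta^{2\alpha-1}$, i.e.\ \eqref{ineq2} with $C_2=\alpha^2/(1-2\alpha)$. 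The step I expect to need the most care is the block analysis for \eqref{ineq1}: when $\alpha<0$ the integrand is singular near the diagonal (small $\kappa_j$), so one must check integrability there, which is the role of $\alpha>-\tfrac12$, whereas summability over the remaining blocks forces $\alpha<\tfrac12$; in \eqref{ineq2} the only subtlety is to resist replacing $(t-s)^{\alpha-1}$ by its supremum $\delta^{\alpha-1}$, which would yield only the weaker power $\delta^{2\alpha-2}$.
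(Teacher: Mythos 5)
Your proof is correct and follows essentially the same route as the paper: the Mean Value Theorem with the intermediate point bounded below by $t-s$ on the blocks away from the singularity (giving the convergent series $\sum_\ell \ell^{2\alpha-2}$, which needs $\alpha<\tfrac12$), a crude $(a-b)^2\le 2a^2+2b^2$ bound on the last block(s) near $s=t$ (which needs $2\alpha+1>0$), and for \eqref{ineq2} the same MVT estimate integrated as $\int_\delta^t r^{2\alpha-2}\,dr$. The only difference is presentational: you rescale to a single function $J(\tau)$ and keep two blocks near the diagonal instead of one, which changes nothing of substance.
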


\begin{proof}
By the Mean Value Theorem, for any $0 \le s<t \le 1$, we have
\begin{equation} \label{e1}
\left[(t-\eta_n(s))^{\alpha}-(t-s)^{\alpha}\right]^2\leq (t-s)^{2\alpha-2}(s-\eta_n(s))^2.
\end{equation}
Let us first show the inequality \eqref{ineq1}. We can assume that $t>\frac 1n$, because in the case $t\le \frac 1n$, the inequality can be easily proved. We decompose the interval $[0,t]$ into the union of the intervals  $\left[0, \frac{\lfloor nt\rfloor-1}{n}\right]$ and $\left[\frac{\lfloor nt\rfloor-1}{n},t\right]$. For the first interval we can write
\begin{align}
\int^{\frac{\lfloor nt\rfloor-1}{n}}_0\left[(t-\eta_n(s))^{\alpha}-(t-s)^{\alpha}\right]^2\, ds \nonumber
&\leq \sum_{i=0}^{\lfloor nt\rfloor-2}\int_{\frac{i}{n}}^{\frac{i+1}{n}}(t-s)^{2\alpha-2}(s-\eta_n(s))^2ds\\ \nonumber
&\leq\sum_{i=0}^{\lfloor nt\rfloor-2}\int_{\frac{i}{n}}^{\frac{i+1}{n}}(t-\frac{i+1}{n})^{2\alpha-2} n^{-2}ds \\  \nonumber
&\leq \sum_{i=0}^{\lfloor nt\rfloor-2}\frac{(nt-i-1)^{2\alpha-2}}{n^{2\alpha-2}} n^{-3}\\ \nonumber
&=\frac{1}{n^{2\alpha+1}}\sum_{i=0}^{\lfloor nt\rfloor-2}(nt-i-1)^{2\alpha-2}\\ \label{E1}
&  \leq \frac{1}{n^{2\alpha+1}} \sum_{j=1} ^\infty j^{2\alpha-2}.
\end{align}
Next for any $ s\in \left[\frac{\lfloor nt\rfloor-1}{n},t\right]$, we have
\begin{align*}
    \left[(t-\eta_n(s))^{\alpha}-(t-s)^{\alpha}\right]^2&\leq C\left[(t-\eta_n(s))^{2\alpha}+(t-s)^{2\alpha}\right] \\& \leq \begin{cases}
  C n^{-2\alpha} & \alpha \geq 0 \\
        C(t-s)^{2\alpha}& \alpha < 0 
    \end{cases} .
\end{align*}
Therefore,
 \begin{equation} \label{E2}
 \int_{\frac{\lfloor nt\rfloor-1}{n}}^t \left[(t-\eta_n(s))^{\alpha}-(t-s)^{\alpha}\right]^2ds \leq Cn^{-2\alpha-1}.
 \end{equation}
Thus combining \eqref{E1} and \eqref{E2} we obtain \eqref{ineq1}.  

The inequality \eqref{ineq2} follows easily from \eqref{e1}. Indeed, we can assume that $t>\delta$, and in this case
 \begin{align*}
    \int_{0}^{t-\delta}\left[(t-\eta_n(s))^{\alpha}-(t-s)^{\alpha}\right]^2\, ds&\leq \int_{0}^{t-\delta}(t-s)^{2\alpha-2}(s-\eta_n(s))^2ds\\
    &\leq \frac{1}{n^2}\int_{0}^{t-\delta}(t-s)^{2\alpha-2}ds\\
    &= \frac 1{n^2} \int_\delta ^t s^{2\alpha -2} ds \\
    &\leq \frac{2\delta^{2\alpha-1}}{n^2(1-2\alpha)} = C_2{n^{-2}} \delta^{2\alpha-1}.
\end{align*}
\end{proof}

 \begin{lemma} \label{lema1}
 Let $\psi_{n,2}(u,s)$ defined as in \eqref{psi2}. There exists a constant $C>0$ such that for any $s\ge 0$, we have
 \[
 n^{2\alpha+1} \int_0^{\eta_n(s)} \psi^2_{n,2}(u,s) du \le C. 
 \]
 \end{lemma}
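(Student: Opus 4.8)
The plan is to compute the integral essentially explicitly by the change of variables $nu=v$, which collapses it into a convergent numerical series that does not depend on $s$ or $n$.

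First I would fix $s\ge 0$, set $j_0=\lfloor ns\rfloor$ so that $\eta_n(s)=j_0/n$, and write $s=(j_0+\theta)/n$ with $\theta\in[0,1)$. If $j_0=0$ there is nothing to prove, so assume $j_0\ge 1$. For $u$ in a subinterval $[i/n,(i+1)/n)$ with $0\le i\le j_0-1$ one has $\eta_n(u)=i/n$, hence, recalling the definition \eqref{psi2} of $\psi_{n,2}$,
\[
\psi_{n,2}(u,s)=\Big(\tfrac{j_0+\theta}{n}-\tfrac in\Big)^{\alpha}-\Big(\tfrac{j_0}{n}-\tfrac in\Big)^{\alpha}=n^{-\alpha}\big[(j_0-i+\theta)^{\alpha}-(j_0-i)^{\alpha}\big].
\]
Since each subinterval has length $1/n$ and the single point $u=\eta_n(s)$ contributes nothing, summing over $i$ and substituting $k=j_0-i$ gives
\[
n^{2\alpha+1}\int_0^{\eta_n(s)}\psi_{n,2}^2(u,s)\,du=\sum_{k=1}^{j_0}\big[(k+\theta)^{\alpha}-k^{\alpha}\big]^2.
\]

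Next I would estimate the summand uniformly in $\theta\in[0,1]$. By the Mean Value Theorem, for every $k\ge 1$ and $\theta\in[0,1]$ there is $\xi\in(k,k+1)$ with $(k+\theta)^{\alpha}-k^{\alpha}=\alpha\theta\,\xi^{\alpha-1}$; since $\alpha-1<0$ we have $\xi^{\alpha-1}\le k^{\alpha-1}$, so $|(k+\theta)^{\alpha}-k^{\alpha}|\le|\alpha|\,k^{\alpha-1}$. Consequently
\[
n^{2\alpha+1}\int_0^{\eta_n(s)}\psi_{n,2}^2(u,s)\,du\le\alpha^2\sum_{k=1}^{\infty}k^{2\alpha-2}=:C,
\]
and the series converges because $2\alpha-2<-1$ owing to $\alpha<\tfrac12$. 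This bound is independent of $s$ and $n$, which finishes the proof.

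I do not expect any real obstacle here: the only point requiring a little care is the bookkeeping in the change of variables (that $u=\eta_n(s)$ is negligible and that the resulting sum runs over $k=1,\dots,j_0$), after which the uniform-in-$\theta$ estimate on the general term is immediate and summable.
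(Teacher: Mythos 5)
Your proof is correct and follows essentially the same route as the paper: both arguments come down to the Mean Value Theorem applied to $x\mapsto x^{\alpha}$ and the convergence of $\sum_{k\ge 1}k^{2\alpha-2}$ (valid since $\alpha<\tfrac12$), yielding the same uniform bound $\alpha^2\sum_{k=1}^{\infty}k^{2\alpha-2}$. The only cosmetic difference is that you first compute the integral exactly as $\sum_{k=1}^{j_0}[(k+\theta)^{\alpha}-k^{\alpha}]^2$ and then estimate each summand, whereas the paper applies the Mean Value Theorem inside the integral before rescaling; the bookkeeping in both versions is sound.
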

 
 \begin{proof} We have, using the Mean Value Theorem,
 \begin{align*}
  \int_0^{\eta_n(s)} \psi^2_{n,2}(u,s) du &=  \int_0^{\eta_n(s)} [(s-\eta_n(u))^{\alpha}-(\eta_n(s)-\eta_n(u))^{\alpha}]^2 du\\
   & \le \alpha^2 (s- \eta_n(s))^2  \int_0^{\eta_n(s)} (\eta_n(s)-\eta_n(u))^{2\alpha-2} du \\
  & \le C n^{-3}  \sum_{i=0} ^{\lfloor ns \rfloor-1}    \left(\eta_n(s) -\frac in \right)^{2\alpha-2} \\
  & =  Cn^{-2\alpha-1}      \sum_{i=0} ^{\lfloor ns \rfloor-1} (\eta_n-i)^{2\alpha-2}.
  \end{align*}
 Since $2\alpha-2 <-1$ the series $\sum_{j=1} ^\infty j^{2\alpha-2}$ is convergent, and this completes the proof of the lemma.
 \end{proof}
 
 \begin{lemma} \label{lema2}
 Let $\psi_{n,1}(u,s)$ defined as in \eqref{psi1}. There exists a constant $C>0$ such that for any $s\ge 0$, we have
 \[
 n^{2\alpha+1} \int_0^{s} \psi^2_{n,1}(u,s) du \le C. 
 \]
 \end{lemma}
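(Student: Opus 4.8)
The statement is, after interchanging the names of the two time variables, exactly the estimate \eqref{ineq1} of Lemma \ref{alpha1}: writing $\psi_{n,1}(u,s)=(s-\eta_n(u))^{\alpha}-(s-u)^{\alpha}$, the claim is that $\int_0^s\psi_{n,1}^2(u,s)\,du\le Cn^{-2\alpha-1}$ uniformly in $s$. The plan is therefore to run the same argument as in the proof of \eqref{ineq1}; the restriction to $[0,1]$ there plays no role, so the bound holds for every $s\in[0,T]$ with a constant depending only on $\alpha$ (and, if one is not careful with the constants, on $T$). The starting point is the Mean Value Theorem applied to $x\mapsto x^{\alpha}$: since $\eta_n(u)\le u$ and $\alpha-1<0$, for $0\le u<s$ one has
\[
\psi_{n,1}^2(u,s)\le (s-u)^{2\alpha-2}\,(u-\eta_n(u))^2\le n^{-2}(s-u)^{2\alpha-2}.
\]

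Because $2\alpha-2<-1$ this bound is integrable only away from the diagonal $u=s$, so I would split $[0,s]$. The case $s\le\frac1n$ is trivial: there $\eta_n(u)=0$ on $[0,s]$, and $\int_0^s(s^{\alpha}-(s-u)^{\alpha})^2\,du$ is of order $s^{2\alpha+1}\le n^{-2\alpha-1}$. For $s>\frac1n$ I would write $[0,s]=[0,\frac{\lfloor ns\rfloor-1}{n}]\cup[\frac{\lfloor ns\rfloor-1}{n},s]$. On the first interval, summing the Mean Value bound over the mesh intervals $[\frac in,\frac{i+1}n]$ and using $(s-u)^{2\alpha-2}\le(s-\frac{i+1}n)^{2\alpha-2}$, the substitution $j=ns-i-1$ turns the sum into $n^{-2\alpha-1}\sum_{j\ge1}j^{2\alpha-2}$, which is finite precisely because $\alpha<\frac12$. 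On the second interval, whose length is at most $\frac2n$, I would instead use $\psi_{n,1}^2(u,s)\le C[(s-\eta_n(u))^{2\alpha}+(s-u)^{2\alpha}]$ and integrate directly, distinguishing $\alpha\ge0$ (both terms are $O(n^{-2\alpha})$, so the integral is $O(n^{-2\alpha-1})$) from $\alpha<0$ (where $s-\eta_n(u)\ge s-u$ gives $\psi_{n,1}^2(u,s)\le C(s-u)^{2\alpha}$, whose integral over an interval of length $\le\frac2n$ is $O(n^{-2\alpha-1})$ since $2\alpha+1>0$), exactly as in the proof of \eqref{ineq1}.

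Adding the two contributions gives $\int_0^s\psi_{n,1}^2(u,s)\,du\le Cn^{-2\alpha-1}$ uniformly in $s\in[0,T]$, which is the assertion after multiplying by $n^{2\alpha+1}$. The only delicate point is the near-diagonal interval $[\frac{\lfloor ns\rfloor-1}{n},s]$, where the crude Mean Value estimate is no longer integrable and one must fall back on the H\"older-type bound for $\psi_{n,1}$ together with the monotonicity of $x\mapsto x^{\alpha}$ when $\alpha<0$; everything else is a routine summation, and the hypothesis $\alpha\in(-\frac12,\frac12)$ enters exactly through the convergence of $\sum_{j\ge1}j^{2\alpha-2}$ and the positivity of $2\alpha+1$.
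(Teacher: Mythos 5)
Your proof is correct, but it follows a different route from the one the paper uses for Lemma \ref{lema2}. You observe that the claim is, up to renaming the variables, the estimate \eqref{ineq1} of Lemma \ref{alpha1} with the fixed time allowed to range over $[0,T]$ rather than $[0,1]$, and you re-run that proof: the Mean Value Theorem bound $\psi_{n,1}^2(u,s)\le n^{-2}(s-u)^{2\alpha-2}$ away from the diagonal, summed over mesh intervals to produce $n^{-2\alpha-1}\sum_{j\ge1}j^{2\alpha-2}$, plus the cruder bound $\psi_{n,1}^2\le C[(s-\eta_n(u))^{2\alpha}+(s-u)^{2\alpha}]$ on the last one or two mesh intervals, where $2\alpha+1>0$ saves the integral. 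The paper instead proves Lemma \ref{lema2} by the scaling change of variables $x=nu$ followed by a shift of index, which rewrites $n^{2\alpha+1}\int_0^s\psi_{n,1}^2(u,s)\,du$ as at most $\sum_{k\ge0}\int_0^{1\wedge(\theta+k)}[(\theta+k)^\alpha-(\theta+k-x)^\alpha]^2\,dx$ with $\theta=ns-\lfloor ns\rfloor\in[0,1)$, and then asserts the uniform boundedness of this series in $\theta$ (the $k=0,1$ terms by $2\alpha>-1$, the tail by the Mean Value Theorem giving $O(k^{2\alpha-2})$). Your version has the advantage of not introducing a new series whose uniform boundedness must be checked separately, and of making explicit that the constant is uniform in $s\ge0$ and that the restriction $t\in[0,1]$ in Lemma \ref{alpha1} is immaterial; the paper's version is shorter and matches the exact-limit computations (e.g.\ \eqref{problem} and the identification of $\kappa_3$, $\kappa_4$) elsewhere in the paper. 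Two cosmetic remarks: the reindexing should be $j=\lfloor ns\rfloor-i-1$ rather than $j=ns-i-1$ (the bound then uses $(ns-i-1)^{2\alpha-2}\le(\lfloor ns\rfloor-i-1)^{2\alpha-2}$ since $2\alpha-2<0$), and in the trivial case one should take $s\le\frac1n$ as you do, where $\eta_n(u)=0$ and the integral is $\int_0^s(s^\alpha-(s-u)^\alpha)^2\,du\le Cs^{2\alpha+1}\le Cn^{-2\alpha-1}$; both points are handled the same way in the paper's proof of Lemma \ref{alpha1}.
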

 
 \begin{proof} We have, using the Mean Value Theorem,
 \begin{align*}
  \int_0^{s} \psi^2_{n,1}(u,s) du &=  \int_0^{s} [(s-\eta_n(u))^{\alpha}-(s-u)^{\alpha}]^2 du\\
  &= \sum_{i=0} ^{\lfloor ns \rfloor}   \int_{\frac in} ^{\frac {i+1} n \wedge s}  [ (s-\frac in)^\alpha - (s-u)^\alpha]^2 du\\
  & =  n^{-2\alpha-1}\sum_{i=0} ^{\lfloor ns \rfloor}   \int_{i} ^{ (i+1) \wedge ns } [ (ns-i)^\alpha - (ns-x)^\alpha]^2 dx \\
    & \le  n^{-2\alpha-1}\sum_{k=0} ^{\infty}    \int_{0} ^{ 1 \wedge (ns -\lfloor ns \rfloor+k)} [ (ns-\lfloor ns \rfloor +k)^\alpha - (ns-\lfloor ns \rfloor +k-x)^\alpha]^2 dx,
  \end{align*}
which is uniformly bounded by a constant $C$ that does not depend on $n$ and $s$.
 \end{proof}

\end{document}